\documentclass[11pt, letterpaper]{article}
\usepackage[margin=1.25in]{geometry}                % See geometry.pdf to learn the layout options. There are lots.
\usepackage{enumitem}
\usepackage{placeins}
\usepackage{graphicx}
\usepackage{stmaryrd}
\usepackage{epstopdf}
\usepackage[utf8]{inputenc}
\usepackage[english]{babel}
\usepackage[square,numbers,sort&compress]{natbib} 
\usepackage{scalerel}[2016/12/29]
\usepackage{bbm}
\usepackage{amsmath,amsfonts,amsthm,amssymb}

\usepackage{lipsum}
	
	\newcommand\blfootnote[1]{%
		\begingroup
		\renewcommand\thefootnote{}\footnote{#1}%
		\addtocounter{footnote}{-1}%
		\endgroup
}

\usepackage{tikz}
\usetikzlibrary{arrows}
\usetikzlibrary{arrows.meta}
\definecolor{wwhhii}{rgb}{1.,1.,1.}
\definecolor{rreedd}{rgb}{1.,0.,0.}
\definecolor{uuuuuu}{rgb}{0.26666666666666666,0.26666666666666666,0.26666666666666666}

\usepackage[bookmarksopen, bookmarksnumbered]{hyperref}
\hypersetup{
	colorlinks=true,
	linkcolor=blue,
	citecolor=blue,
	filecolor=magenta,      
	urlcolor=cyan,
	linktoc=page
}
             % See geometry.pdf to learn the layout options. There are lots.
\usepackage{amsmath}
\usepackage{amsthm}
\usepackage{amssymb}
\usepackage{enumitem}
\usepackage{placeins}
\usepackage{graphicx}
\usepackage{epstopdf}
\usepackage{xcolor}

\theoremstyle{plain}
\newtheorem{theorem}{Theorem}[section]

\newtheorem{prop}[theorem]{Proposition}% reset theorem numbering for each chapter
\newtheorem{lemma}[theorem]{Lemma}

\theoremstyle{definition}
\newtheorem{definition}[theorem]{Definition}
\newtheorem{remark}[theorem]{Remark}
\newtheorem{fact}[theorem]{Fact}
% definition numbers are dependent on theorem numbers
 % same for example numbers
%\newenvironment{claim}[1]{\par\noindent\underline{Claim:}\space#1}{}

%\theoremstyle{plain}
%\newtheorem{innercustomthm}{Theorem}
%\newenvironment{customthm}[1]
%{\renewcommand\theinnercustomthm{#1}\innercustomthm}
%{\endinnercustomthm}

\newcommand{\Z}{\mathbb{Z}}
\newcommand{\Q}{\mathbb{Q}}
\newcommand{\N}{\mathbb{N}}
\newcommand{\R}{\mathbb{R}}
\newcommand{\p}{\mathbb{P}}
\newcommand{\E}{\mathbb{E}}

\newcommand{\fX}{\mathfrak{X}}

\newcommand{\cl}[1]{{\left\lceil #1 \right\rceil}}
\newcommand{\sset}{\subset}
\newcommand{\lf}{\left}
\newcommand{\rg}{\right}

\newcommand{\mathforall}{\text{ for all }}
\newcommand{\mathand}{\;\text{and}\;}

\newcommand{\ep}{\epsilon}

\newcommand{\sig}{\sigma}

\newcommand{\la}{\lambda}
\newcommand{\al}{\alpha}

\newcommand{\Rd}{\mathbb{R}^4_\uparrow}
\newcommand{\Qd}{\mathbb{Q}^4_\uparrow}

\newcommand{\cA}{\mathcal{A}}

\newcommand{\cC}{\mathcal{C}}

\newcommand{\cF}{\mathcal{F}}

\newcommand{\cL}{\mathcal{L}}

\newcommand{\cS}{\mathcal{S}}

\newcommand{\cW}{\mathcal{W}}

\newcommand{\supp}{\text{supp}}

\usepackage{MnSymbol}

\newcommand{\eqd}{\stackrel{d}{=}}

\newcommand{\X}{\times}

\newcommand{\rev}{\text{rev}}
\newcommand{\id}{\text{id}}

\newcommand{\bx}{\mathbf{x}}
\newcommand{\by}{\mathbf{y}}
\newcommand{\bz}{\mathbf{z}}
\newcommand{\bp}{\mathbf{p}}
\newcommand{\bq}{\mathbf{q}}

\newcommand{\II}[1]{\left \llbracket #1 \right \rrbracket}

%choose your color 

\usepackage{parskip}

\title{Last passage isometries for the directed landscape}
		\author{Duncan Dauvergne\footnote{Department of Mathematics, University of Toronto, Toronto, ON, CA} \footnote{Email:  duncan.dauvergne@utoronto.ca}}
\begin{document}

	\maketitle
	
		\begin{abstract}
		Consider the restriction of the directed landscape $\cL(x, s; y, t)$ to a set of the form $\{x_1, \dots, x_k\} \X \{s_0\} \X \R \X \{t_0\}$. We show that on any such set, the directed landscape is given by a last passage problem across $k$ locally Brownian functions. The $k$ functions in this last passage isometry are built from certain marginals of the extended directed landscape. As applications of this construction, we show that the Airy difference profile is locally absolutely continuous with respect to Brownian local time, that the KPZ fixed point started from two narrow wedges has a Brownian-Bessel decomposition around its cusp point, and that the directed landscape is a function of its geodesic shapes.
		\blfootnote{\textit{Keywords}: last passage percolation, directed landscape, KPZ universality, Airy sheet, Brownian local time}
		\blfootnote{\textit{MSC Class:} 60K35}
		\blfootnote{\textit{Data availability statement:} Data sharing not applicable to this article as no datasets were generated or analysed during the current study.}
	\end{abstract}
	
	\tableofcontents
	\section{Introduction}
	
	Let $f$ be a sequence of continuous functions $f_i:\R\to \R, i \in \Z$. For a nonincreasing cadlag function $\pi$ from $[x, y]$ to the integer interval $\II{m, n}$, henceforth a \textbf{path} from $(x, n)$ to $(y, m)$, define the length of $\pi$ with respect to the environment $f$ by
\begin{equation}
\label{E:pi-length}
	\|\pi\|_f = \sum_{i = m}^n f_i(\pi_i) - f_i(\pi_{i+1}).
	\end{equation}
	Here $\pi_i = \inf \{t \in [x, y] : \pi(t) < i\}$ is the time when $\pi_i$ jumps off of line $i$, and if this set is empty we set $\pi_i = y$. We will think of the space $\R \X \Z$ in matrix coordinates, so that paths move up across the page, see Figure \ref{fig:dis-opt}. For $x \le y \in \R$ and $m \le n \in \Z$, define the \textbf{last passage value across the environment $f$} by
	\begin{equation}
	\label{E:BLPP-multi-intro}
	f[(x, n) \to (y, m)] = \sup_{\pi} \|\pi\|_f,
	\end{equation} 
	where the supremum is over all paths from $(x, n)$ to $(y, m)$.
	A path $\pi$ that achieves this supremum is a \textbf{geodesic}. When the environment is a collection of independent two-sided Brownian motions $B = \{B_i : i \in \Z\}$, the Brownian last passage percolation $(x, m; y, n) \mapsto B[(x, m) \to (y, n)]$ has a four-parameter scaling limit, recently constructed in \cite{DOV}. This limit is the directed landscape $\cL$. It is a random continuous function from the parameter space 
	$$
	\R^4_\uparrow = \{u = (p; q) = (x, s; y, t) \in \R^4 : s < t\}
	$$
	to $\R$. More recently, $\cL$ was shown to be the scaling limit of other integrable models of last passage percolation \cite{dauvergne2021disjoint}, and $\cL$ is conjectured to be the scaling limit of all random growth and random metric models in the Kardar-Parisi-Zhang (KPZ) universality class. For background on last passage percolation and the KPZ universality class, see the books and review articles \cite{romik2015surprising, corwin2012kardar, quastel2011introduction, zygouras2018some} and references therein.
	
	A priori, there is no reason to expect that $\cL$ retains characteristics particular to any one of its prelimits.	However, one might guess that Brownian last passage percolation bears a stronger connection with $\cL$ than other models since $\cL$ is known to have locally Brownian behaviour as we vary $x$ and $y$. In this article we explore this connection. Quite surprisingly, certain marginals of the directed landscape can essentially be expressed as Brownian last passage problems!
	
	For this theorem and throughout the paper we say that $B$ is a $k$-dimensional Brownian motion of variance $\al$ if $B = \sqrt{\al} B'$, where $B'$ is a standard $k$-dimensional Brownian motion. We also write $X \ll Y$ for two random variables $X, Y$ if the law of $X$ is absolutely continuous with respect to the law of $Y$.
	
	\begin{theorem}
		\label{T:main-comparison}
	Let $s < t \in \R, b > 0$ and $x_1 < \dots < x_k \in \N$. Let $\cL$ denote the directed landscape and let $B$ be a collection of $k$ independent Brownian motions of variance $2$. Consider the random continuous functions $f_\cL, f_B:\II{1, k}\X [-b, b] \to \R$ given by
	\begin{align*}
f_\cL(i, y) &= \cL(x_i, s; y, t) - \cL(x_i, s; -b, t) \\
f_B(i, y) &= B[(-b - 1, i) \to (y, 1)] - B[(-b-1, i) \to (-b, 1)]. 
	\end{align*}
	Then $f_\cL \ll f_B$.
	\end{theorem}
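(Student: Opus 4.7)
The plan is to prove the theorem in two stages: first establish a deterministic \emph{last passage isometry} identifying $f_\cL$ as the output of a last passage problem in a random $k$-line environment $G$, and then transfer absolute continuity from $G$ to $f_\cL$ by comparing $G$ to independent Brownian motions.

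For the first stage, I would construct an environment $G = (G_1, \dots, G_k)$ of continuous functions on $\R$, built from certain marginals of the extended directed landscape, such that for every $i \in \II{1, k}$ and every $y \in [-b, b]$,
\[
\cL(x_i, s; y, t) = G[(-b-1, i) \to (y, 1)] + c_i,
\]
with random constants $c_i$ independent of $y$. The natural candidate for the environment arises from a melon/RSK-type decomposition of the family $\{\cL(x_j, s; \cdot, t) : 1 \le j \le k\}$. The cleanest route is to first prove the analogous isometry at the Brownian prelimit level, where it follows from the classical Robinson--Schensted--Knuth correspondence, and then pass to the scaling limit using the convergence of Brownian last passage to the directed landscape from \cite{DOV}, together with uniqueness and compactness properties of the relevant geodesics in $\cL$.

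For the second stage, I would show that $G$, restricted to any compact $y$-interval, is absolutely continuous with respect to $k$ independent variance-$2$ Brownian motions. The key inputs are the Brownian Gibbs resampling property of the parabolic Airy line ensemble together with existing absolute continuity comparisons between Airy-type line ensembles and Brownian motions on compact intervals. Because the map
\[
(h_1, \dots, h_k) \mapsto \bigl\{h[(-b-1, i) \to (y, 1)] : i \in \II{1, k},\, y \in [-b, b]\bigr\}
\]
is a deterministic continuous function of the environment, the desired absolute continuity $f_\cL \ll f_B$ follows by composing the environment comparison with this last passage map. The starting $x$-coordinate $-b-1$ in the definition of $f_B$ provides a buffer so that all relevant geodesics remain inside a fixed compact set, which is what makes the comparison clean.

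The main obstacle is the first stage. A last passage isometry at the prelimit is essentially combinatorial via RSK, but verifying that it survives the scaling limit demands careful control: one must show that the melon transform is continuous at the specific line ensembles that occur, and that the geodesics from $(x_i, s)$ to endpoints $(y, t)$ are almost surely unique and jointly well-behaved as $y$ varies. Once this deterministic identification is in hand, the absolute continuity step becomes largely mechanical given known properties of Airy-type line ensembles.
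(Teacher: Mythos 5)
Your high-level plan — identify the restriction of $\cL$ with a last passage problem across a $k$-line environment built from landscape marginals, show the environment is locally Brownian, then compose — matches the shape of the paper's argument, which is exactly what Theorem \ref{T:landscape-iso} together with the short proof of Theorem \ref{T:main-comparison} accomplishes. However, there is a genuine gap in your first stage that the paper spends real effort to close.

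You propose a deterministic identity of the form $\cL(x_i, s; y, t) = G[(-b-1, i) \to (y, 1)] + c_i$ for a $k$-line environment $G$ and random constants $c_i$, holding simultaneously for all $i$ and all $y \in [-b,b]$. No such identity holds almost surely. The correct isometry (Theorem \ref{T:landscape-iso}.3) expresses $\cL(x_i,s;y,t)$ as a last passage value in $W_\bx\cL$ started from $-\infty$, and by the stability part (Theorem \ref{T:landscape-iso}.1) this value only stabilizes as a finite last passage problem once the starting line is to the left of a \emph{random} integer $N = Z_0(-b)$. The event $\{N \le -b-1\}$ has positive probability but not probability one, so last passage from $(-b-1,i)$ in $W_\bx\cL$ does not a.s.\ reproduce $\cL(x_i,s;\cdot,t)$; it misses the contribution of $W_\bx\cL$ on $(-\infty, -b-1]$, which depends on $i$ and cannot be absorbed into a constant $c_i$ independent of $y$. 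Your remark that $-b-1$ "provides a buffer so that all relevant geodesics remain inside a fixed compact set" is precisely where the argument breaks: geodesics from $(-\infty, i)$ to $(y,1)$ in $W_\bx\cL$ do not a.s.\ coalesce by the time $-b-1$.

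The paper's fix is the real content of the proof of Theorem \ref{T:main-comparison}: after invoking the isometry at the random level $N$ and the local Brownianity of $W_\bx\cL$ to get absolute continuity with respect to the Brownian last passage process started at $N$, one still must show that the Brownian last passage process
\[
\bigl\{B[(n, i) \to (y, 1)] - B[(n, i) \to (-b, 1)] : i \in \II{1,k},\ y \in [-b,b]\bigr\}
\]
has laws that are mutually absolutely continuous as the start level $n$ varies. This is Lemma \ref{L:ac-lemma}, proved via a side-to-side RSK isometry (Proposition \ref{P:F-side-to-side}) reducing the joint law of the last passage values $\{B[(a,i)\to(b,j)]\}$ to a Gelfand--Tsetlin pattern (the GUE minors process), whose density is explicit and supported on the full interlacing cone. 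Without this step, or some substitute for it, your composition argument cannot conclude because the environment comparison you set up lives at a random, $\cL$-dependent start level rather than at $-b-1$. This is a concrete missing idea, not just a detail to be filled in. A secondary, less serious divergence: you propose to establish the landscape isometry by proving a prelimit RSK identity and passing to the limit, whereas the paper works directly in the continuum using the extended Airy sheet characterization and geodesic coalescence; the prelimit route would need to control convergence of the melon transform and of multi-point geodesics uniformly, which is a substantial undertaking in its own right.
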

The recentering by $\cL(x_i, s; -b, t)$ and $B[(-b-1, i) \to (-b, 1)]$ is necessary to deal with the fact that the last passage values $B[(-b-1, i) \to (y, 1)]$ are increasing in $i$, but the corresponding landscape values are not. Note that the $k=1$ case of Theorem \ref{T:main-comparison} is just local absolute continuity of the Airy$_2$ process with respect to Brownian motion; this was first shown in \cite{CH}. 

Theorem \ref{T:main-comparison} follows from a more refined structural theorem -- Theorem \ref{T:landscape-iso} -- that expresses values of the form $\cL(x_i, s; y, t), i \in \II{1, k}, y \in \R$ as a last passage problem across $k$ locally Brownian functions. To state that theorem and explain how Theorem \ref{T:main-comparison} arises, we need to introduce a few more notions.

	\subsection{The Airy sheet, multi-point last passage, and the RSK isometry}
	
	\FloatBarrier
	
	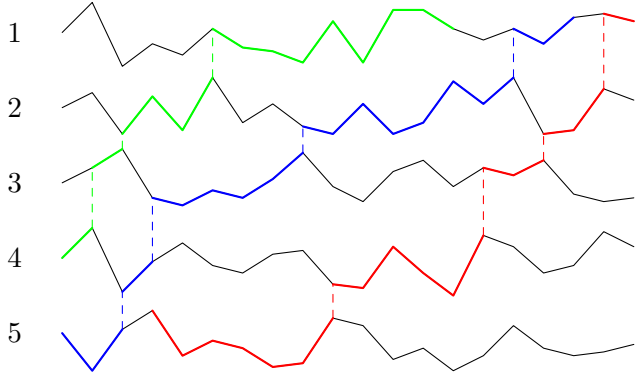
\begin{figure}
		\centering
		\begin{tikzpicture}[line cap=round,line join=round,>=triangle 45,x=4cm,y=5cm]
		\clip(-0.15,-0.15) rectangle (2.15,1.15);
		
		% \fill[line width=0.pt,color=green,fill=green,fill opacity=0.35]
		% (0.15,0.25) -- (0.25,0.15) -- (0.05,-0.05) -- (-0.05,0.05) -- cycle;
		% \fill[line width=0.pt,color=green,fill=green,fill opacity=0.35]
		% (0.9,1.2) -- (1.,1.1) -- (0.8,0.9) -- (0.7,1.) -- cycle;

		\draw (0.,0.1) node[anchor=east]{$5$};
		\draw (0.,0.3) node[anchor=east]{$4$};
		\draw (0.,0.5) node[anchor=east]{$3$};
		\draw (0.,0.7) node[anchor=east]{$2$};
		\draw (0.,0.9) node[anchor=east]{$1$};

		\draw plot coordinates {(0.1,0.1) (0.2,0.) (0.3,0.11) (0.4,0.16) (0.5,0.04) (0.6,0.08) (0.7,0.06) (0.8,0.01) (0.9,0.02) (1.,0.14) (1.1,0.12) (1.2,0.03) (1.3,0.06) (1.4,0.) (1.5,0.04) (1.6,0.12) (1.7,0.06) (1.8,0.04) (1.9,0.05) (2.,0.07) };
		\draw plot coordinates {(0.1,0.3) (0.2,0.38) (0.3,0.21) (0.4,0.29) (0.5,0.34) (0.6,0.28) (0.7,0.26) (0.8,0.31) (0.9,0.32) (1.,0.23) (1.1,0.22) (1.2,0.33) (1.3,0.26) (1.4,0.2) (1.5,0.36) (1.6,0.33) (1.7,0.26) (1.8,0.28) (1.9,0.37) (2.,0.33) };
		\draw plot coordinates {(0.1,0.5) (0.2,0.54) (0.3,0.59) (0.4,0.46) (0.5,0.44) (0.6,0.48) (0.7,0.46) (0.8,0.51) (0.9,0.58) (1.,0.49) (1.1,0.45) (1.2,0.53) (1.3,0.56) (1.4,0.49) (1.5,0.54) (1.6,0.52) (1.7,0.56) (1.8,0.47) (1.9,0.45) (2.,0.46) };
		\draw plot coordinates {(0.1,0.7) (0.2,0.74) (0.3,0.63) (0.4,0.73) (0.5,0.64) (0.6,0.78) (0.7,0.66) (0.8,0.71) (0.9,0.65) (1.,0.63) (1.1,0.71) (1.2,0.63) (1.3,0.66) (1.4,0.77) (1.5,0.71) (1.6,0.78) (1.7,0.63) (1.8,0.64) (1.9,0.75) (2.,0.72) };
		\draw plot coordinates {(0.1,0.9) (0.2,0.98) (0.3,0.81) (0.4,0.87) (0.5,0.84) (0.6,0.91) (0.7,0.86) (0.8,0.85) (0.9,0.82) (1.,0.93) (1.1,0.82) (1.2,0.96) (1.3,0.96) (1.4,0.91) (1.5,0.88) (1.6,0.91) (1.7,0.87) (1.8,0.94) (1.9,0.95) (2.,0.93) };

		\draw [thick] [red] plot coordinates {(0.4,0.16) (0.5,0.04) (0.6,0.08) (0.7,0.06) (0.8,0.01) (0.9,0.02) (1.,0.14)};
		\draw [dashed] [red] plot coordinates {(1.,0.14) (1.,0.23)};
		\draw [thick] [red] plot coordinates {(1.,0.23) (1.1,0.22) (1.2,0.33) (1.3,0.26) (1.4,0.2) (1.5,0.36)};
		\draw [dashed] [red] plot coordinates {(1.5,0.36) (1.5,0.54)};
		\draw [thick] [red] plot coordinates {(1.5,0.54) (1.6,0.52) (1.7,0.56)};
		\draw [dashed] [red] plot coordinates {(1.7,0.56) (1.7,0.63)};
		\draw [thick] [red] plot coordinates {(1.7,0.63) (1.8,0.64) (1.9,0.75)};
		\draw [dashed] [red] plot coordinates {(1.9,0.75) (1.9,0.95)};
		\draw [thick] [red] plot coordinates {(1.9,0.95) (2.,0.93)};
		
		\draw [thick] [blue] plot coordinates {(0.1,0.1) (0.2,0.) (0.3,0.11)};
		\draw [dashed] [blue] plot coordinates {(0.3,0.11) (0.3,0.21)};
		\draw [thick] [blue] plot coordinates {(0.3,0.21) (0.4,0.29)};
		\draw [dashed] [blue] plot coordinates {(0.4,0.29) (0.4,0.46)};
		\draw [thick] [blue] plot coordinates {(0.4,0.46) (0.5,0.44) (0.6,0.48) (0.7,0.46) (0.8,0.51) (0.9,0.58)};
		\draw [dashed] [blue] plot coordinates {(0.9,0.58) (0.9,0.65)};
		\draw [thick] [blue] plot coordinates {(0.9,0.65) (1.,0.63) (1.1,0.71) (1.2,0.63) (1.3,0.66) (1.4,0.77) (1.5,0.71) (1.6,0.78)};
		\draw [dashed] [blue] plot coordinates {(1.6,0.78) (1.6,0.91)};
		\draw [thick] [blue] plot coordinates {(1.6,0.91) (1.7,0.87) (1.8,0.94)};
		
		\draw [thick] [green] plot coordinates {(0.1,0.3) (0.2,0.38)};
		\draw [dashed] [green] plot coordinates {(0.2,0.38) (0.2,0.54)};
		\draw [thick] [green] plot coordinates {(0.2,0.54) (0.3,0.59)};
		\draw [dashed] [green] plot coordinates {(0.3,0.59) (0.3,0.63)};
		\draw [thick] [green] plot coordinates {(0.3,0.63) (0.4,0.73) (0.5,0.64) (0.6,0.78)};
		\draw [dashed] [green] plot coordinates {(0.6,0.78) (0.6,0.91)};
		\draw [thick] [green] plot coordinates {(0.6,0.91) (0.7,0.86) (0.8,0.85) (0.9,0.82) (1.,0.93) (1.1,0.82) (1.2,0.96) (1.3,0.96) (1.4,0.91)};
		
		\end{tikzpicture}
		\caption{A disjoint optimizer from $((0,0,0.2),5)$ to $((0.7,0.9,1),1)$.}   \label{fig:dis-opt}
	\end{figure}
	The fundamental building block in the directed landscape is the \textbf{Airy sheet} $\cS:\R^2 \to \R$ given by $\cS(x, y) := \cL(x, 0; y, 1)$. The directed landscape is built from independent Airy sheets in an analogous way to how Brownian motion is built from independent normal distributions. The Airy sheet was constructed by understanding how a continuous version of the Robinson-Schensted-Knuth (RSK) correspondence interacts with random inputs. 
	
	Let $\cC^n_0$ be the space of $n$-tuples of continuous functions $f = (f_1, \dots, f_n), f_i:[0,\infty) \to \R$ with $f(0) = 0$. The \textbf{continuous RSK correspondence} is a map $W:\cC^n_0 \to \cC^n_0$. It can be presented in purely geometric terms by looking at \textbf{multi-point last passage percolation}.
Consider an environment $f = (f_i, i \in \Z)$
	and vectors $\bp = (p_1, \dots, p_k), \bq = (q_1, \dots, q_k)$, where $p_i = (x_i, n_i), q_i = (y_i, m_i) \in \R \X \Z$ and $x_i \le x_{i+1}, y_i \le y_{i+1}, n_i \ge n_{i+1}, m_i \ge m_{i+1}$ for all $i$. Let
	\begin{equation}
	\label{E:multi-point-vectors}
	f[\bp \to \bq ]= \sup_\pi \sum_{i=1}^k \|\pi\|_f
	\end{equation}
	where the supremum is over all \textbf{disjoint $k$-tuples} of paths $\pi = (\pi_1, \dots, \pi_k)$, where each $\pi_i$ goes from $(x_i,n)$ to $(y_i, m)$, and $\pi_i(z) < \pi_{i+1}(z)$ for $z \in (x_i, y_i) \cap (x_{i+1}, y_{i+1})$. We call a $k$-tuple that achieves this supremum a \textbf{(disjoint) optimizer}.
	For certain choices of $\bx, \by$, this supremum is taken over the empty set, in which case we set $f[\bp \to \bq ]= -\infty$.
	 For $f \in \cC^n_0$, define $Wf \in \cC^n_0$ by
	\begin{equation}
	\label{E:Wf-definition}
	\sum_{i=1}^k Wf_i(y) = f[(0, n)^k \to (y, 1)^k]
	\end{equation}
	for $k \in \II{1, n}, t \in [0, \infty)$. Here and throughout we write $p^k$ for the vector consisting of $k$ copies of a point $p$. Remarkably, the RSK map $W$ is an \textbf{isometry} between the upper and lower boundaries of $f$. For $\bx \in \R^k_\le := \{\bx \in \R^k : x_1 \le \dots \le x_k\}$, we let $(\bx, m)$ denote the $k$-tuple of points $(x_1, m), \dots, (x_k, m)$.
For any vectors $(\bx, n), (\by, 1)$ with $0 \le x_1$, we have
	\begin{equation}
	\label{E:Wf-isometry}
	f[(\bx, n) \to (\by, 1)] = Wf[(\bx, n) \to (\by, 1)].
	\end{equation}
The isometry \eqref{E:Wf-isometry} was shown in \cite[Proposition 4.1]{DOV}, though closely related formulas had been previously observed by Biane, Bougerol, and O'Connell \cite{biane2005littelmann} and Noumi and Yamada \cite{noumi2002tropical}. 

The RSK correspondence behaves well with certain random inputs. In particular, if $B\in \cC^n_0$ is a sequence of independent Brownian motions, then  $WB$ is a sequence of nonintersecting Brownian motions, see \cite{o2002representation, o2003conditioned}. In the KPZ scaling limit, $WB$ converges to the \textbf{parabolic Airy line ensemble} $\cW$, constructed by Pr\"ahofer and Spohn \cite{prahofer2002scale}, and realized as a sequence of infinity many nonintersecting, locally Brownian functions by Corwin and Hammond \cite{CH}.
	
	The Airy sheet was built in \cite{DOV} by understanding how the isometry \eqref{E:Wf-isometry} passes to the KPZ limit for single points, see Definition \ref{D:Airy-sheet}. In \cite{dauvergne2021disjoint}, this analysis was extended to multiple points to construct an extended Airy sheet and an extended directed landscape. To describe the construction, we need to talk about metric notions for $\cL$. 
	
	The value $\cL(p; q) = \cL(x, s; y, t)$ is best thought of as a distance between two points $p$ and $q$ in the space-time plane. 
Here $x, y$ are spatial coordinates and $s, t$ are time coordinates. We cannot move backwards or instantaneously in time, so $\cL(x, s; y, t)$ is not defined for $s \ge t$.  Unlike with an ordinary metric, $\cL$ is not symmetric and may take negative values. As in last passage percolation, it also satisfies the triangle inequality backwards:
	\begin{equation}
	\label{E:triangle}
	\cL(p; r) \ge \cL(p; q) + \cL(q; r) \qquad \mbox{ for all }(p; r), (p; q), (q; r) \in \Rd.
	\end{equation}
	Just as in true metric spaces, we can define path lengths in $\cL$, see \cite[Section 12]{DOV}. In $\cL$, a \textbf{path} from $(x, s)$ to $(y, t)$ is a continuous function $\pi:[s, t] \to \R$ with $\pi(s) = x$ and $\pi(t) = y$, with \textbf{length}
	\begin{equation}
	\label{E:length-L}
	\|\pi\|_\cL=\inf_{k\in \N}\inf_{s=t_0<t_1<\ldots<t_k=t}\sum_{i=1}^k\cL(\pi(t_{i-1}),t_{i-1};\pi(t_i),t_i)\,.
	\end{equation}
	This is analogous to defining the length of a curve in Euclidean space by piecewise linear approximation.
	A path $\pi$ is a \textbf{geodesic} if $\|\pi\|_\cL$ is maximal among all paths with the same start and end points. Equivalently, a geodesic is any path $\pi$ with $\|\pi\|_\cL = \cL(\pi(s),s;\pi(t),t)$. Almost surely, geodesics exist between every pair of points $(x, s), (y, t)$ with $s < t$.
	
	Now, for $\bx, \by \in \R^k_\le, s < t$ let
	\begin{equation}
	\label{E:multi-point-L}
	\cL(\bx, s; \by, t) = \sup_\pi \sum_{i=1}^k \|\pi_i\|_\cL,
	\end{equation}
	where the supremum is over all $k$-tuples of paths $\pi_i:[s,t] \to \R$ with $\pi_i(s) = x_i, \pi_i(t) = y_i$ and $\pi_i(r) < \pi_{i+1}(r)$ for all $r \in (s, t)$. This extension of $\cL$ is called the \textbf{extended directed landscape}, abbreviated as extended landscape. The extended landscape is continuous on its domain, and satisfies a limiting RSK isometry. Define an infinite collection of functions $W \cL_i:\R \to \R, i \in \N$ by 
	\begin{equation}
\label{E:WL-definition}
\sum_{i=1}^k W\cL_i(y) = \cL(0^k, 0; y^k, 1).
	\end{equation}
	Then $W \cL$ is a parabolic Airy line ensemble and for any $\bx, \by \in \R^k_\le$, we have
	\begin{equation}
	\label{E:cLW}
	\cL(\bx, 0; \by, 1) = W\cL[\bx \to \by].
	\end{equation}
Of course, one needs to make sense of the right side of \eqref{E:cLW}, see Theorem \ref{T:extended-sheet-char} for details.
	
	\subsection{Other isometries}
	
	In addition to $Wf$, for a given $f \in \cC^n_0$ there are other interesting environments $g \in \cC^n_0$ that satisfy the isometric property \eqref{E:Wf-isometry}. Our main theorems in this paper come from exploring these other isometries in the limit $\cL$.
	
	One of the standard methods for constructing $Wf$ (via iterated Pitman transforms) yields a set of environments $W_\tau f \in \cC^n$ satisfying \eqref{E:Wf-isometry} indexed by permutations $\tau \in S_n$.
	The environments $W_\tau f$ can also be described in terms of certain multi-point last passage values in analogy with \eqref{E:Wf-definition}. Moreover, if $B$ is a collection of independent Brownian motions, then while $W_\tau B$ does not have the tractable nonintersecting structure of $WB$,
	its paths are still locally absolutely continuous with respect to Brownian motion, as is the case with $WB$. See Section \ref{S:properties-BLPP} for more details.
	
	This story has an analogue in the directed landscape. The environments we construct will no longer satisfy \eqref{E:cLW} for all possible $\bx, \by$, but rather only for $\bx$ with entries in a particular finite set $\{x_1, \dots, x_k\}$. On the other hand, these environments are in one sense significantly simpler than $W \cL$: they will consist of only $k$ lines. As in the finite setting, while these isometric environments no longer have the integrable or nonintersecting structure of $W \cL$, they are still locally Brownian and their lines can be described via certain marginals of the extended directed landscape, similarly to \eqref{E:WL-definition}.
	
	To set up our main theorem, we need a notion of last passage from  $-\infty$. Let $f = (f_1, \dots, f_k), f_i:\R \to \R$ be an environment of continuous functions. For $I = \{i_1 <  \dots < i_\ell \}\sset \II{1, k}$ and a $k$-tuple $\bp$, define
	\begin{equation}
	\label{E:W-LPP}
	f[(-\infty, I) \to \bp] = \lim_{z \to -\infty} f[(z, I) \to \bp] + \sum_{i \in I} f_i(z).
	\end{equation}
	In \eqref{E:W-LPP} and throughout we write $(z, I)$ for the $|I|$-tuple $(z, i_1), \dots, (z, i_\ell)$. This type of last passage can equivalently be defined using a supremum over disjoint paths, see Remark \ref{R:path-def-infty}. Now let $\bx \in \R^k_\le$ with $x_1 < \dots < x_k$. For a set $I \sset \II{1, k}$, we write $\bx^I$ for the vector in $\R^{|I|}$ consisting only of coordinates $x_i$ of $\bx$ with $i \in I$.
	Define a sequence of $k$ functions $W_\bx \cL = \{W_\bx \cL_i:\R\to \R, i \in \II{1, k}\}$ by the formula
	\begin{equation}
	\label{E:WLx}
	\sum_{i=1}^\ell W_\bx \cL_i(y) = \cL(\bx^{\II{1, \ell}}, 0; y^\ell, 1), \qquad \text{ for } \ell \in \II{1, k}, y \in \R.
	\end{equation}
	\begin{theorem}
		\label{T:landscape-iso}
	Almost surely, for any $\bx \in \R^k_\le$ with $x_1 < \dots < x_k$ we have the following:
	\begin{enumerate}
		\item (Asymptotics and stability) For any $i \in \II{1, k}$, we have
		\begin{equation}
		\label{E:asymptotics}
		\lim_{z \to \pm\infty} \frac{W_\bx \cL_i(z) + z^2}{z} = 2x_i. 
		\end{equation} 
		In particular, this implies that for $y_0 \in \R$, there exists a random $Z_0(y_0) \in (-\infty, y_0) \cap \Z$ such that
		\begin{equation}
		\label{E:WxLx-stable}
		W_\bx \cL[(-\infty, I) \to (\by, 1)] = W_\bx \cL[(z, I) \to (\by, 1)] + \sum_{i \in I} \cW_i(z) 
		\end{equation}
		for all $I \sset \II{1, k}, z \le Z_0$ and $\by \in \R^{|I|}_\le$ with $y_1 \ge y_0$.
		\item (Locally Brownian) Conditional on $W_\bx \cL(0)$ the function $W_\bx \cL(\cdot) - W_\bx \cL(0)$ is locally absolutely continuous with respect to a $k$-dimensional Brownian motion $B$ of variance $2$. In other words, if we let $B, \cL$ be independent then on any interval $[a, b]$, we have $(W_\bx \cL(0), W_\bx \cL|_{[a, b]} - W_\bx \cL(0)) \ll (W_\bx \cL(0),B|_{[a, b]})$.
		\item (Isometry) For any $I \sset \II{1, k}$ and $\by \in \R^{|I|}_\le$, we have
		$$
	W_\bx \cL[(-\infty, I) \to (\by, 1)] = \cL(\bx^I, 0; \by, 1).
		$$
	\end{enumerate}
	\end{theorem}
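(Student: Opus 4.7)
The plan is to prove the three assertions in the order (1), (3), (2). Part (1) is needed in order to make sense of the last-passage values from $-\infty$ that appear in part (3), and once the representation in part (3) is established, part (2) follows by transferring Brownian structure from a prelimit.

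For part (1), the asymptotics of each $W_\bx\cL_i$ follow by telescoping \eqref{E:WLx},
$$W_\bx\cL_i(y) = \cL(\bx^{\II{1,i}},0;y^i,1) - \cL(\bx^{\II{1,i-1}},0;y^{i-1},1),$$
together with known parabolic asymptotics for the extended landscape: $\cL(\bx^{\II{1,\ell}},0;y^\ell,1) + \sum_{j=1}^\ell (y-x_j)^2$ grows sublinearly in $y$. This yields \eqref{E:asymptotics} after dividing by $y$. Given the asymptotics, the stability claim reduces to showing that optimizers for $W_\bx\cL[(z,I)\to(\by,1)]$ do not want to spend time on lines in $I$ far to the left of a $y_0$-dependent threshold, once one subtracts off the contribution $\sum_{i\in I} W_\bx\cL_i(z) \approx -|I|z^2$. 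This is a standard comparison argument pitting the quadratic decay of the line values against the local subquadratic fluctuations of $W_\bx\cL_i$, using triangle-style inequalities for $\cL$ that hold for multi-point last passage.

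For part (3), the crux is to identify a Brownian last passage prelimit that already satisfies the analog of the claimed isometry, and to pass it to the limit. Via the iterated-Pitman-transform construction $W_\tau f$ for a suitable permutation $\tau = \tau(\bx^{(n)})$, define the BLPP environment $W_{\bx^{(n)}} B$ by
$$\sum_{i=1}^\ell W_{\bx^{(n)}} B_i(y) = B[(\bx^{(n),\II{1,\ell}},n)\to(y^\ell, n-\ell+1)]$$
with appropriate KPZ centering. Using the Pitman formulation of \eqref{E:Wf-isometry} together with the multi-point last passage description of $W_\tau f$ alluded to in the introduction, verify directly that $W_{\bx^{(n)}} B[(-\infty,I)\to(\by,1)] = B[(\bx^{(n),I},n)\to(\by,1)]$. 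Then apply the KPZ convergence of multi-point BLPP to the extended landscape from \cite{DOV, dauvergne2021disjoint} on both sides, invoking the stability statement of part (1) in order to commute the $-\infty$ boundary with the scaling limit.

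For part (2), the corresponding prelimit property is classical: each $W_\tau B_i$ is absolutely continuous with respect to Brownian motion on compact intervals, since iterated Pitman transforms preserve local absolute continuity. This property transfers to $W_\bx \cL$ in the limit using the convergence established in part (3) and standard Brownian comparison estimates for marginals of the extended landscape in the spirit of \cite{CH}. The main obstacle is the uniform control required in the scaling limit of part (3), in particular the need to interchange the KPZ limit with the $-\infty$ boundary of the last passage structure; the stability estimate in part (1) is the key quantitative input here, and matching it at the prelimit and limit levels is likely the most delicate technical step.
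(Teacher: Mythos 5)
Your high-level plan — prove (1) from parabolic asymptotics, derive the isometry (3) from an iterated-Pitman-transform structure, and get (2) by inheriting Brownian absolute continuity — is the right conceptual picture. However, your route for parts (2) and (3) is a prelimit-to-limit argument (work in BLPP, prove everything for $W_{\bx^{(n)}}B$, then pass to the KPZ limit), and this is genuinely different from what the paper does. The paper works \emph{intrinsically in the limit}: it applies finite iterated Pitman transforms directly to the parabolic Airy line ensemble $\cW=W_0\cL$ rather than to a prelimit Brownian environment, and never needs to establish process-level convergence of the transformed environments.

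The gap in your plan is concentrated in exactly the step you flag as "the most delicate technical step," and you do not indicate how to close it. To make your part (3) work, you would need not just the FDD convergence of rescaled multi-point BLPP values to the extended landscape (which is in \cite{dauvergne2021disjoint}), but a functional convergence of the transformed environments $W_{\bx^{(n)}}B_i(\cdot)$ to $W_\bx\cL_i(\cdot)$, uniformly on compacts, strong enough to commute with a last-passage supremum that has one endpoint running off to $-\infty$. Nothing in the cited references supplies this directly. The paper sidesteps the issue entirely via a structural lemma (Lemma~\ref{L:S-lemma}) that uses geodesic \emph{coalescence} in $\cW$ (Proposition~\ref{P:geodesic-properties}(v)): for fixed $\bx$ and a compact window $[a,b]$, there is a random integer $Y<a$ such that every extended Airy sheet value $\cS(\bx^I,\by)$ with $\by\in[a,b]^{|I|}_\le$ equals a compact $\cW$-last-passage value $\cW[(z,\pi(z)^I)\to(\by,1)]$ plus a deterministic-given-$\cF_z$ shift, for all $z\le Y$. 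This is the missing key lemma: it reduces the $-\infty$ last-passage problem to finite Pitman transforms of $\cW$ on a compact interval, after which Proposition~\ref{P:top-lines} (a finite isometry for the permutations $\tau_I$) gives part (3), and the Brownian Gibbs property of $\cW$ combined with Proposition~\ref{P:brown-abs} (iterated Pitman transforms of Brownian motion remain locally absolutely continuous with respect to Brownian motion) gives part (2). Without an analogue of this coalescence input, your limiting argument has nothing to anchor the $z\to-\infty$ boundary, and the "stability estimate in part (1)" by itself does not produce the needed uniform-in-$n$ control at the prelimit level. Your part (1) is fine and matches the paper's argument in substance.
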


\begin{remark}
	\begin{enumerate}
		\item Theorem \ref{T:landscape-iso} allows us to represent complicated marginals of the directed landscape and the Airy sheet in terms of last passage in a finite environment of locally Brownian continuous functions. In particular, this allows us to show that almost sure properties of Brownian last passage percolation across \emph{finitely many} lines hold in $\cL$. We discuss a few consequences of this in Section \ref{S:consequences}.
		\item One could set up a version of Theorem \ref{T:landscape-iso} for $\bx \in \R^k_\le$ without the strict ordering condition. This is a technical extension that we do not pursue here. We point out that the case where $x_i = 0$ for all $i$ is implicit in Proposition 5.9 in \cite{dauvergne2021disjoint}. In this special case, $W_\bx \cL$ returns the top $k$ lines of $W \cL$.
		\item Even though we have fixed the times in Theorem \ref{T:landscape-iso} equal to $0$ and $1$, the result also holds for arbitrary times $s < t$ by invariance properties of $\cL$. 
		\item We believe the construction in Theorem \ref{T:landscape-iso} should be useful for studying Busemann functions for the extended landscape. Consider the limiting field
		\begin{equation}
		\label{E:Busemann-field}
		B_\cL(\bx, \by) = \lim_{t \to \infty} \cL(t\bx, -t; 0, \by) - \cL(t\bx, -t; 0,0^k),
		\end{equation}
		which should give the Busemann function in a direction $\bx \in \R^k_\le$ between locations $(0, \by)$ and $(0, 0^k)$. Taking a limit of the construction in Theorem \ref{T:landscape-iso} should yield insight into the nature $B_\cL$. Indeed, for any $\bx \in \R^k_\le, \by \in \R^k_\le$ and $I \sset \II{1, k}$, we should have
		\begin{equation}
		\label{E:WLx-as}
		B_\cL(\bx^I, \by) = W^\infty_\bx \cL [\bx^I \to \by], \qquad \text{ where } \qquad \sum_{i=1}^\ell W^\infty_\bx \cL_i(y) := B_\cL(\bx^{\II{1, \ell}}, y^\ell).
		\end{equation}
		Moreover, each of the lines $W^\infty_\bx \cL_i$ should be an independent, variance $2$ Brownian motion with drift $2 x_i$. We do not attempt to make this limiting picture rigorous, or to justify the existence of the limit in \eqref{E:Busemann-field}. 
		
		Busemann functions have been studied in detail in last passage models, yielding remarkable insights about the geometry of infinite geodesics and other last passage structures, e.g. see \cite{cator2012busemann, georgiou2017stationary, janjigian2019geometry, seppalainen2021busemann}.
		At the level of single points, a Busemann function structure closely related to \eqref{E:WLx-as} was shown for exponential last passage percolation by Fan and Sepp\"al\"ainen \cite{fan2018joint}, building on work of Ferrari and Martin \cite{ferrari2007stationary} studying stationary measures in multi-type tasep. In \cite{fan2018joint}, the analogue of the first equality in \eqref{E:WLx-as} is distributional. The lines corresponding to $W^\infty_\bx \cL_i$ are constructed via certain `multiclass processes' which, a priori, are unrelated to multi-point last passage values.
	\end{enumerate}
\end{remark}
	
\subsection{Consequences}
\label{S:consequences}

In addition to Theorem \ref{T:main-comparison}, we present three other fairly straightforward consequences of Theorem \ref{T:landscape-iso}.  
First, fix $x_1 < x_2$ and define the \textbf{Airy difference profile}
$$
A^{x_1, x_2}(y) = \cL(x_2, 0; y, 1) -  \cL(x_1, 0; y, 1).
$$
The function $A^{x_1, x_2}$ is a continuous increasing function, and hence is the CDF of a random measure $\mu_{x_1, x_2}$. Moreover, basic symmetries of $\cL$ imply that the process $A^{x_1, x_2} - \E A^{x_1, x_2}$ is stationary and 
$$
\E A^{x_1, x_2}(y) = - x_2^2 - x_1^2 + 2y(x_2 - x_1),
$$
so on average, $A^{x_1, x_2}$ increases linearly. However, this is not the case for individual realizations of $A^{x_1, x_2}$; in fact, $\mu_{x_1, x_2}$ is supported on a lower dimensional set.

In \cite{basu2019fractal}, Basu, Ganguly and Hammond showed that the support of $\mu_{x_1, x_2}$ a.s.\ has Hausdorff dimension $1/2$, and in \cite{bates2019hausdorff}, Bates, Ganguly and Hammond proved that this support coincides with a set of exceptional events for geodesics in $\cL$. Moving beyond a Hausdorff dimension estimate, Ganguly and Hegde \cite{ganguly2021local} showed that $A^{x_1, x_2}$ can be represented as a `patchwork quilt' of objects that are locally absolutely continuous with respect to the running maximum of a Brownian motion. The techniques used in \cite{ganguly2021local} have similarities with our methods, i.e. they also use properties of iterated Pitman transforms to study $A^{x_1, x_2}$.

As a consequence of Theorem \ref{T:landscape-iso} in the special case when $\bx = (x_1, x_2)$, we recover all of these previous results and more. In particular, we show that $A^{x_1, x_2}$ is locally absolutely continuous with respect to the running maximum of a Brownian motion without any need for a patchwork quilt.

\begin{theorem}
	\label{T:Difference}
	Fix $\bx = (x_1, x_2)$ with $x_1 < x_2$.
	\begin{enumerate}
		\item For all $y \in \R$,
		$$
		A^{x_1, x_2}(y)  = \sup_{z \le y} W_\bx \cL_2 (z) - W_\bx \cL_1(z)= \sup_{z \le y} \cL((x_1, x_2), 0; z^2, 1) - 2\cL(x_1, 0; z, 1).
		$$
		\item For any compact set $[a, b]$, the law of $A^{x_1, x_2}|_{[a, b]}$ is absolutely continuous with respect to the law of the running maximum $M$ of a Brownian motion $B$ on $[a-1, b]$ of variance $4$, i.e. $M(x) = \sup_{a-1 \le y \le x} B(y)$.
		\item The support of $\mu_{x_1, x_2}$ is the set of $y \in \R$ where 
		$$
		\cL((x_1, x_2), 0; y^2, 1) = \cL(x_1, 0; y, 1) + \cL(x_2, 0; y, 1).
		$$
		Equivalently, this is the of set of $y \in \R$ where there exist geodesics $\pi_1$ from $(x_1, 0) \to (y, 1)$ and $\pi_2$ from $(x_2, 0) \to (y, 1)$ such that $\pi_1(r) < \pi_2(r)$ for all $r \in [0, 1)$.
	\end{enumerate}	
\end{theorem}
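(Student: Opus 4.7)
The plan is to deduce all three parts from Theorem \ref{T:landscape-iso} specialized to $\bx = (x_1, x_2)$. For Part 1, the isometry (part 3 of Theorem \ref{T:landscape-iso}) with $I = \{1\}$ combined with \eqref{E:WLx} directly gives $W_\bx\cL_1(y) = \cL(x_1, 0; y, 1)$. For $I = \{2\}$, a path from $(z, 2)$ to $(y, 1)$ stays on line $2$ until jumping down at some $\tau \in [z, y]$, so unfolding \eqref{E:W-LPP} yields
$$\cL(x_2, 0; y, 1) = \lim_{z \to -\infty}\Bigl(W_\bx\cL_1(y) - W_\bx\cL_2(z) + \sup_{\tau \in [z, y]}[W_\bx\cL_2(\tau) - W_\bx\cL_1(\tau)] + W_\bx\cL_2(z)\Bigr),$$
in which the $W_\bx\cL_2(z)$ terms cancel. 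The asymptotics \eqref{E:asymptotics} force $W_\bx\cL_2(z) - W_\bx\cL_1(z) \to -\infty$ with slope $2(x_2 - x_1) > 0$ as $z \to -\infty$, so the supremum makes sense over all $\tau \le y$. Subtracting $\cL(x_1, 0; y, 1) = W_\bx\cL_1(y)$ gives the first formula of Part 1; the second follows by rewriting $W_\bx\cL_2(z) - W_\bx\cL_1(z) = \cL((x_1, x_2), 0; z^2, 1) - 2\cL(x_1, 0; z, 1)$ via \eqref{E:WLx}.

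Write $g := W_\bx\cL_2 - W_\bx\cL_1$, so Part 1 reads $A^{x_1, x_2}(y) = \sup_{z \le y} g(z)$. Theorem \ref{T:landscape-iso} part 2 shows that $g|_{[a-1, b]} - g(a-1)$ is locally absolutely continuous with respect to a one-dimensional variance-$4$ Brownian motion on $[a-1, b]$ starting at $0$ (being the difference of two independent variance-$2$ Brownian motions). For $y \in [a, b]$ one has the decomposition
$$A^{x_1, x_2}(y) = g(a) + \max\bigl(A^{x_1, x_2}(a) - g(a),\ \sup_{a \le z \le y}[g(z) - g(a)]\bigr),$$
which structurally parallels $M(y) = B(a) + \max(M(a) - B(a),\ \sup_{a \le z \le y}[B(z) - B(a)])$. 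By the Markov property of $B$ at $a$, the post-$a$ increments $B|_{[a, b]} - B(a)$ are independent of the pair $(B(a), M(a) - B(a))$, and $M(a) - B(a)$ has an everywhere-positive density on $[0, \infty)$. Combining this Markov independence with the AC of $g|_{[a, b]} - g(a)$ relative to $B|_{[a, b]} - B(a)$ (from Theorem \ref{T:landscape-iso} part 2) and an AC-of-pushforwards argument for the measurable map $(g(a), A^{x_1, x_2}(a) - g(a), g|_{[a, b]} - g(a)) \mapsto A^{x_1, x_2}|_{[a, b]}$ then yields Part 2.

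For Part 3, $A^{x_1, x_2}$ is the continuous running maximum of the continuous function $g$, so the support of $\mu_{x_1, x_2}$ equals the closed set $\{y : g(y) = A^{x_1, x_2}(y)\}$. By the Part 1 formula and the identities $W_\bx\cL_1(y) = \cL(x_1, 0; y, 1)$ and $W_\bx\cL_1(y) + W_\bx\cL_2(y) = \cL((x_1, x_2), 0; y^2, 1)$, the condition $g(y) = A^{x_1, x_2}(y)$ rearranges to $\cL((x_1, x_2), 0; y^2, 1) = \cL(x_1, 0; y, 1) + \cL(x_2, 0; y, 1)$, giving the first characterization. The equivalence with the geodesic characterization is a standard last passage fact: a strictly disjoint geodesic pair has total length $\cL(x_1, 0; y, 1) + \cL(x_2, 0; y, 1)$ but is also a valid disjoint $2$-tuple, so lies below $\cL((x_1, x_2), 0; y^2, 1) \le \cL(x_1, 0; y, 1) + \cL(x_2, 0; y, 1)$ (the last bound being trivial), forcing all inequalities to be equalities. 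Conversely, the displayed identity together with the existence of disjoint optimizers for the multipoint problem (established in \cite{dauvergne2021disjoint}) produces the strictly disjoint geodesic pair.

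The main obstacle will be the absolute continuity argument in Part 2: the past-dependent offset $A^{x_1, x_2}(a) - g(a)$ requires care because its joint law with $g|_{[a, b]} - g(a)$ is not directly controlled by Theorem \ref{T:landscape-iso}. The Markov independence at $a$ decouples the Brownian analogues, and the delicate step is a matching decoupling on the landscape side together with verification that the landscape offset inherits a sufficiently regular law to be absolutely continuous with respect to the half-normal-like law of $M(a) - B(a)$.
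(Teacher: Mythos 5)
Your Part 1 is correct and takes essentially the same route as the paper: one applies the isometry (Theorem~\ref{T:landscape-iso}.3) with $I=\{1\}$ and $I=\{2\}$ together with the definition~\eqref{E:WLx}, unfolds the two-line last passage problem, and uses the asymptotics in~\eqref{E:asymptotics} to make sense of $\sup_{z\le y}$. (The paper's proof cites ``parts 1 and 2,'' which appears to be a typo for ``parts 1 and 3''; your version is correct.) Your Part 3 is also essentially the paper's argument, with one glossed-over point: you assert that the support of $\mu_{x_1,x_2}$ \emph{equals} the closed set $\{y: g(y)=A^{x_1,x_2}(y)\}$, but a priori the support is only a closed \emph{subset} of this set --- one must rule out points $y$ with $g(y)=A(y)$ at which $A$ is locally constant. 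The paper does this by observing that such points do not exist for Brownian motion and transferring via Theorem~\ref{T:landscape-iso}.2. Your geodesic equivalence at the end is fine (the paper just cites Proposition~\ref{P:disjointness}).

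The genuine gap is in Part 2, and you have correctly diagnosed it yourself in your final paragraph but not filled it. Your decomposition at time $a$ hinges on understanding the joint law of $\bigl(g(a),\,A^{x_1,x_2}(a)-g(a),\,g|_{[a,b]}-g(a)\bigr)$, but the offset $A^{x_1,x_2}(a)-g(a)=\sup_{z\le a}[g(z)-g(a)]$ involves the entire left tail of $g$, which Theorem~\ref{T:landscape-iso}.2 does not control: that theorem only gives absolute continuity of $g$ on a \emph{fixed compact interval} conditional on $W_\bx\cL(0)$. The paper circumvents this by first invoking the asymptotics~\eqref{E:asymptotics} to produce a random integer $N<a-1$ such that $A^{x_1,x_2}(y)=\max_{N\le z\le y}g(z)$ simultaneously for all $y\in[a,b]$, and then reducing to showing, for each \emph{fixed} $n<a$, that $y\mapsto\max_{n\le z\le y}g(z)$ on $[a,b]$ is absolutely continuous with respect to $M|_{[a,b]}$; countability of the choices of $n$ then yields the claim. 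For a fixed $n$, the truncated running maximum is a measurable functional of $g|_{[n,b]}$, whose law Theorem~\ref{T:landscape-iso}.2 \emph{does} control, and the remaining Brownian-to-Brownian comparison (running max from $n$ versus running max from $a-1$, restricted to $[a,b]$) is precisely the Markov-at-$a$ argument you sketch. So your Markov decomposition is the right tool for the final Brownian step, but without the truncation to a random integer $N$ and the ensuing countability argument, the transfer from the landscape side to the Brownian side does not go through.
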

The disjoint geodesic characterization of $\supp(\mu_{x_1, x_2})$ in Theorem \ref{T:Difference}.$3$ was shown in \cite{basu2019fractal}. We have included it above to highlight how it can alternately be obtained as an immediate consequence of our main theorem and one of the main results of \cite{dauvergne2021disjoint}. 

As a consequence of Theorem \ref{T:Difference}, we can show that $\cL$ can be reconstructed from only the \emph{shapes} of its geodesics, without any information about their lengths. 
\begin{theorem}
	\label{T:geodesic-frame}
	Let $\Qd = \Rd \cap \Q^4$. For every $u = (p, q) \in \Qd$, let $\pi_\cL(u)$ be the a.s. unique $\cL$-geodesic from $p$ to $q$, and for $s < t \in \Q$ let $\cF_{s, t}$ be the $\sig$-algebra generated by $\{\pi_\cL(x, s; y, t) : (x, y) \in \Q^2\}$ and all null sets. Then  $\cL(\cdot, s; \cdot, t):\R^2 \to \R$ is $\cF_{s, t}$-measurable.
	
	In particular, we can a.s. reconstruct the entire directed landscape $\cL$ using only the information from $\pi_\cL(u), u \in \Qd$.
\end{theorem}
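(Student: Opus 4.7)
The plan is to reduce the statement to recovering the Airy difference profiles $A^{x_1, x_2}$ from geodesic shape data, and then to assemble these differences into the full landscape using a normalization at infinity. By continuity of $\cL$ and density of $\Q$, the ``in particular'' claim follows from the $\cF_{s,t}$-measurability statement by intersecting over rational pairs $s < t$, so I would fix rational $s < t$ and focus on recovering $\cL(x, s; y, t)$ for rational $(x, y)$.

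The first step is to extract the support of $\mu_{x_1, x_2}$ for each rational pair $x_1 < x_2$. For rational $y$, the event that the geodesics $\pi_\cL(x_1, s; y, t)$ and $\pi_\cL(x_2, s; y, t)$ are disjoint on $[s, t)$ is a property of the shapes, hence $\cF_{s,t}$-measurable; by Theorem \ref{T:Difference}.3 the closure (in $y$) of this set of rational $y$ is $\supp \mu_{x_1, x_2}$. Since $A^{x_1, x_2}$ is constant on each connected component of the complement of its support, this already delivers the level-set structure of $A^{x_1, x_2}$.

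The substantive step is to recover the actual values of $A^{x_1, x_2}$. Here I would use the triangle equality along a coalescing pair: if $\pi_\cL(x_1, s; y, t)$ and $\pi_\cL(x_2, s; y, t)$ coalesce at time $\tau \in (s, t)$ at position $z$, then
\[
A^{x_1, x_2}(y) = \cL(x_2, s; z, \tau) - \cL(x_1, s; z, \tau),
\]
transferring the profile evaluation to a pair with the same $x$-coordinates but a later initial time. Iterating this identity along rational refinements pushes $\tau$ arbitrarily close to $t$, while Theorem \ref{T:landscape-iso} (applied with $\bx = (x_1, x_2)$) expresses the joint landscape in terms of the locally Brownian functions $W_\bx \cL$; the Brownian fine structure then controls the residual increments and should render them $\cF_{s,t}$-measurable. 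Summing $A^{x_0, x}$ across rational $x$ determines $\cL(\cdot, s; y, t)$ up to a $y$-independent term, and a symmetric argument varying $x$ for fixed rational $y$ pins $\cL$ down up to a single overall additive constant, which is fixed by a shape-measurable normalization using the deterministic leading behaviour of $\cL$ at large spatial separation.

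The main obstacle is the quantitative part of the substantive step: converting the support information and the recursive coalescence identity into a genuine measurable reconstruction of $A^{x_1, x_2}$, rather than just its level-set pattern. This looks to require a careful induction on the number $k$ of starting points, using Theorem \ref{T:landscape-iso} to reduce the problem at each stage to properties of the finitely many locally Brownian lines $W_\bx \cL$, whose shape-measurability must itself be established in parallel with that of $\cL$.
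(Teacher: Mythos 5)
Your setup is right, and it matches the paper up to the point where the real work begins: you correctly identify, via Theorem~\ref{T:Difference}.3 and Proposition~\ref{P:disjointness}, that the supports $\supp\mu_{x_1,x_2}$ are $\cF_{s,t}$-measurable, since disjointness of the two geodesics is a shape property. You also correctly note that this gives the level-set pattern of $A^{x_1,x_2}$ but not its values. The problem is that you then stop at precisely the step that carries the theorem, and your proposed replacement does not work.

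The coalescence identity $A^{x_1,x_2}(y)=\cL(x_2,s;z,\tau)-\cL(x_1,s;z,\tau)$ is correct, but it is circular for the purpose at hand: the right-hand side consists of landscape values with terminal time $\tau\in(s,t)$, and nothing in $\cF_{s,t}$ (geodesic shapes between times $s$ and $t$) gives access to those. Restricting a geodesic's shape to $[s,\tau]$ is shape-measurable, but its length is not; so ``transferring the evaluation to a later time'' does not reduce the problem, it relocates it outside the given $\sigma$-algebra. Similarly, ``the Brownian fine structure then controls the residual increments and should render them $\cF_{s,t}$-measurable'' is an aspiration rather than an argument, and the ``careful induction on $k$'' you gesture at has no mechanism to convert a closed set into a real-valued increment. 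You candidly flag this yourself as the ``main obstacle,'' which is exactly right: as written, there is a gap.

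The missing idea in the paper is a deterministic reconstruction of the increments of $A^{x_1,x_2}$ from $\supp\mu_{x_1,x_2}$ alone, using the exact Hausdorff measure theorem of Taylor and Wendel \cite{taylor1966exact}. Because $A^{x_1,x_2}|_{[a,b]}$ is absolutely continuous with respect to the running maximum of a Brownian motion (Theorem~\ref{T:Difference}.2, together with the finite-$n$ representation $A^{x_1,x_2}(z)=\max_{-n\le y\le z}[W_\bx\cL_2(y)-W_\bx\cL_1(y)]$ from Theorem~\ref{T:Difference}.1 and the asymptotics in Theorem~\ref{T:landscape-iso}.1), and because for a Brownian running maximum the increment over $[a,b]$ equals the exact (generalized) Hausdorff measure of the record set intersected with $[a,b]$, one gets almost surely
\[
H\bigl(\supp\mu_{x_1,x_2}\cap[a,b]\bigr)=A^{x_1,x_2}(b)-A^{x_1,x_2}(a)
\]
for an explicit gauge function $H$. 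This is the step that turns a closed set into numbers. The remaining reconstruction (pinning down the additive constants) is done in the paper not by ``deterministic leading behaviour at large spatial separation'' but by an ergodic-theorem average of the stationary Airy processes $z\mapsto\cL(x,0;z,1)+ (z-x)^2$ and $x\mapsto\cL(x,0;a,1)+(x-a)^2$, using their known pointwise mean. Your normalization step is therefore also underspecified, though fixable along these lines. The substantive gap, however, is the absence of the Hausdorff-measure reconstruction, and the coalescence iteration you propose in its place does not supply it.
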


We thank B\'alint Vir\'ag for pointing out the key step in the proof of Theorem \ref{T:geodesic-frame} and the reference \cite{taylor1966exact}.\footnote{Since we first started working with the directed landscape, B\'alint Vir\'ag and I have long been interested in these sort of reconstruction questions for $\cL$. There are many interesting ones. One we particularly like (which would significantly strengthen Theorem \ref{T:geodesic-frame}) is whether a landscape value $\cL(p; q)$ can be reconstructed from only the shape of the geodesic from $p$ to $q$.}

Our final consequence concerns the KPZ fixed point, constructed by Matetski, Quastel, and Remenik \cite{matetski2016kpz}. The KPZ fixed point is a Markov process $\mathfrak{h}_t,t \in [0, \infty)$ taking values in the space of upper semicontinuous functions $h:\R \to \R \cup \{-\infty\}$ with a growth bound at $\pm \infty$. It is the scaling limit of random growth models in the KPZ universality class. By results of \cite{nica2020one, dauvergne2021scaling}, it is related to the directed landscape $\cL$ by the following formula. Letting $h_0:\R \to \R \cup \{-\infty\}$ denote the initial condition of the KPZ fixed point, we can write
\begin{equation}
\label{E:ht-def}
\mathfrak{h}_t(y) = \max_{x \in \R} h_0(x) + \cL(x, 0; y, t).
\end{equation}
In \cite{hammond2019patchwork}, Hammond (see also \cite{hammond2016brownian, calvert2019brownian}) demonstrated how for fixed $t$, $\mathfrak{h}_t$ can be decomposed as a patchwork quilt made of Brownian pieces. More precisely, he showed that under mild assumptions on the initial condition, $\mathfrak{h}_t$ has the following description:
\begin{itemize}
	\item There exists an ordered (finite or countable) random sequence $\dots A_{-1} < A_0 < A_1 < \dots$  which is finite when restricted to any compact set, and an (unordered) random sequence $p_i, i \in \Z$, such that
	\begin{equation}
	\label{E:ht-cusps}
	\mathfrak h_t(x) = \sum_{i \in \Z} \mathbf{1}(x \in [A_i, A_{i+1}))[Y_i(x) + p_i].
	\end{equation}
	\item Each of the functions $Y_i, i \in \Z$ is locally absolutely continuous with respect to a Brownian motion of variance $2$.
\end{itemize}
In this description, each patch $Y_i(x) + p_i = \cL(y_i, 0; x, t) + h_0(y_i)$ for some strictly increasing sequence $y_i$. 

More recently, Sarkar and Vir\'ag \cite{sarkar2021brownian} showed that in fact the KPZ fixed point started from any initial condition is locally absolutely continuous with respect to Brownian motion.
From this point of view, the cusp points $A_i$ are not seen. However, we still expect interesting behaviour at these points. Using Theorem \ref{T:landscape-iso}, we can identify what is actually happening at these cusp points. For simplicity, we restrict ourselves to the easiest nontrivial case when the initial condition $h_0$ is combination of two narrow wedge initial conditions.
In this case there is exactly one cusp, and the KPZ fixed point has the following basic structure.

\begin{fact}
	\label{F:KPZFP-structure}
	Let $\mathfrak{h}_t$ denote the KPZ fixed point at time $t$ started from an initial condition $h_0$ which is equal to $-\infty$ except at two points $p_1 < p_2$ where $h_0(p_1) = a_1, h_0(p_2) = a_2$ for some  $a_1, a_2 \in \R$. Then there exists $A \in \R$ such that
	$$
	\mathfrak h_t(y) = [\cL(p_1, 0; y, 1) + a_1]\mathbf{1}(y < A) + [\cL(p_2, 0; x, 1) + a_2]\mathbf{1}(y \ge A).
	$$
\end{fact}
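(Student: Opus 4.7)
The plan is to apply the variational formula \eqref{E:ht-def} and then reduce everything to monotonicity of the Airy difference profile. Since $h_0(x) = -\infty$ off the two-point set $\{p_1, p_2\}$, the supremum in \eqref{E:ht-def} is achieved at one of these two points, so
$$
\mathfrak h_t(y) = \max\bigl\{\cL(p_1, 0; y, t) + a_1,\; \cL(p_2, 0; y, t) + a_2\bigr\}.
$$
The desired structure for $\mathfrak h_t$ is therefore equivalent to showing that the sign of
$$
D(y) := \bigl[\cL(p_2, 0; y, t) + a_2\bigr] - \bigl[\cL(p_1, 0; y, t) + a_1\bigr]
$$
switches from nonpositive to positive at some point $A \in \R$ and does not switch back.

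The key point is that $D(y)$ is continuous and nondecreasing in $y$. Modulo a scaling to reduce $t$ to $1$, this is exactly the statement that the Airy difference profile $A^{p_1, p_2}$ is continuous and increasing, which is recorded in the discussion just before Theorem \ref{T:Difference}. If one prefers to give a direct argument, it follows from the reverse triangle inequality \eqref{E:triangle} applied twice to $(p_i, 0), (y, t), (y', t)$ for $y < y'$, combined with continuity of $\cL$; equivalently, it expresses non-crossing of geodesics emanating from $(p_1, 0)$ and $(p_2, 0)$.

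Next I verify that $D$ is unbounded on both sides, so that the transition point $A$ is finite. By the known parabolic asymptotics of $\cL$ (e.g. $\cL(x, 0; y, t) = -(y - x)^2/t + o(y^2)$ as $|y| \to \infty$, a standard consequence of the one-point law and stationarity), one has
$$
D(y) = \frac{2y(p_2 - p_1)}{t} + (a_2 - a_1) + \frac{p_1^2 - p_2^2}{t} + o(y),
$$
so $D(y) \to \pm \infty$ as $y \to \pm \infty$ (using $p_1 < p_2$). Setting
$$
A := \inf\{y \in \R : D(y) \ge 0\},
$$
continuity and monotonicity of $D$ give $A \in \R$, $D(y) < 0$ for $y < A$, and $D(y) \ge 0$ for $y \ge A$. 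This is precisely the claimed identity for $\mathfrak h_t$.

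The only mildly nontrivial input is the monotonicity of $D$; everything else is a direct computation from the variational formula. I would expect the author to either invoke the monotonicity of the difference profile as already noted in the introduction, or to insert a one-line argument based on \eqref{E:triangle}.
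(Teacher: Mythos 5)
Your proof is correct in its main line, but it takes a genuinely more elementary route than the paper. The paper deduces Fact \ref{F:KPZFP-structure} as a byproduct of the representation in Theorem \ref{T:landscape-iso}: after writing $\mathfrak h_t(y) = \max(\cL(p_1,0;y,1)+a_1,\ \cL(p_2,0;y,1)+a_2)$, it uses parts $1$ and $3$ of that theorem to express $\cL(p_2,0;y,1)-\cL(p_1,0;y,1)$ as $\sup_{z\le y}[W_\bp\cL_2(z)-W_\bp\cL_1(z)]$, a running supremum, from which monotonicity and unboundedness in both directions are immediate. Your version bypasses Theorem \ref{T:landscape-iso} entirely and relies on two classical inputs: monotonicity of the Airy difference profile and parabolic asymptotics of the Airy sheet. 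That is a valid and lighter-weight argument for this isolated Fact, though it of course does not yield the finer Brownian--Bessel structure of Theorem \ref{T:KPZ-fixed-point} for which the paper needs the stronger representation anyway.

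One genuine error in a side remark: you claim monotonicity of $D$ ``follows from the reverse triangle inequality \eqref{E:triangle} applied twice to $(p_i, 0), (y, t), (y', t)$.'' This cannot work, because $(y,t)$ and $(y',t)$ lie on the same time slice and $\cL$ is only defined for strictly increasing times, so the triangle inequality never relates two points at the same time. The monotonicity you need is the quadrangle (supermodularity) inequality
$$
\cL(p_1,0;y,t) + \cL(p_2,0;y',t) \ \ge\ \cL(p_1,0;y',t) + \cL(p_2,0;y,t), \qquad p_1<p_2,\ y<y',
$$
which is a planarity/geodesic-ordering fact, not a consequence of \eqref{E:triangle}. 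Your second parenthetical (``non-crossing of geodesics'') is the right reason; the first is not. Since you also appeal to the stated monotonicity from the introduction as your primary justification, the overall proof stands, but that alternative sketch should be excised or corrected. Minor: you write the parabolic correction as $o(y^2)$, which is too weak to conclude the linear term in $D(y)$ dominates; you need (and later implicitly use) the stronger $o(y)$ bound, which is what the paper's Theorem \ref{T:landscape-iso}.1 and the known a.s.\ sublinear growth of the stationary Airy process provide.
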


We are concerned with understanding the joint law of $(A, \mathfrak{h}_t)$.

\begin{theorem}
	\label{T:KPZ-fixed-point}
	Let $X \in \R$ be any random variable with a positive Lebesgue density everywhere, let $B:\R\to \R$ be a two-sided standard Brownian motion, let $R:\R\to \R$ be a two-sided Bessel-$3$ process. That is, $R = \|X\|_2$, where $X$ is a $3$-dimensional standard Brownian motion. Suppose that all $3$ objects are independent. Then with $\mathfrak{h}_t$ as in the setting of Fact \ref{F:KPZFP-structure}, for any $t > 0$ and any compact interval $I \sset \R$, we have $(A, \mathfrak{h}_t(\cdot)|_I - \mathfrak{h}_t(A)) \ll (X, [B + R](-X + \cdot)|_I)$.
\end{theorem}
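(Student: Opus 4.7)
The plan is to decompose $\mathfrak{h}_t$ near the cusp in terms of the two locally Brownian lines produced by Theorem~\ref{T:landscape-iso}, and then invoke Pitman's theorem together with Williams' time-reversal decomposition of Brownian motion at a first-passage time to identify the Bessel-$3$ structure.

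By temporal scaling invariance of $\cL$, I would reduce to $t=1$. Applying Theorem~\ref{T:landscape-iso} with $\bx=(p_1,p_2)$, let $L_i = W_\bx\cL_i$, so $L_1(y)=\cL(p_1,0;y,1)$. Combining Theorem~\ref{T:Difference}.(1) with Fact~\ref{F:KPZFP-structure} gives
\[
\mathfrak{h}_1(y) = L_1(y) + \max\bigl(a_1,\, a_2 + M_D(y)\bigr), \qquad D := L_2 - L_1,\quad M_D(y):=\sup_{z\le y}D(z),
\]
from which the cusp $A$ is seen to be the first passage time of $D$ to the level $c := a_1-a_2$.

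Next, I would introduce the independent coordinates $U := (L_1+L_2)/2$ and $V := D/2$; by Theorem~\ref{T:landscape-iso}.(2), $(U,V)$ is locally absolutely continuous with respect to a pair of independent standard Brownian motions. A short computation yields, for $y$ near $A$,
\[
\mathfrak{h}_1(y) - \mathfrak{h}_1(A) = \bigl(U(y)-U(A)\bigr) + G(y-A),
\]
with $G(s) = 2 M_{\tilde V}(s) - \tilde V(s)$ for $s\ge 0$, where $\tilde V(s):= V(A+s) - c/2$, and $G(-s) = c/2 - V(A-s)$ for $s>0$. In the Brownian model the strong Markov property at $A$ makes $\tilde V$ a fresh standard Brownian motion, so Pitman's theorem gives that $G|_{[0,\infty)}$ is a Bessel-$3$ process; Williams' time reversal at the first-passage time shows that $G|_{(-\infty,0]}$ is a Bessel-$3$ bridge, hence locally absolutely continuous with respect to a Bessel-$3$ process on any compact interval. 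The two halves are conditionally independent given $A$, and $U-U(A)$ is a standard two-sided Brownian motion independent of $V$ (and hence of $A$ and $G$). Since $A$ (a first passage of drifted BM) has a continuous positive Lebesgue density, any choice of positive density for $X$ is absorbed by a change of marginal in the joint absolute continuity. This establishes the claim in the Brownian model.

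To transfer to $\cL$, I would localize. For $K_N = [-N,N] \supset I$, on the event $\{A \in K_N\}$ both $A$ and $\mathfrak{h}_1|_I-\mathfrak{h}_1(A)$ are measurable functions of $(L_1,L_2)|_{K_N}$ together with the exterior event $\{D < c \text{ on } (-\infty,-N]\}$. By Theorem~\ref{T:landscape-iso}.(2), the conditional law of $(L_1,L_2)|_{K_N}$ given $(L_1,L_2)(0)$ is absolutely continuous with respect to independent Brownian motions on $K_N$, and this absolute continuity lifts the Brownian-model conclusion to $\cL$ on the localizing event, with the exterior event absorbed into the Radon--Nikodym density. Since $A$ is a.s.\ finite, $\p[A \in K_N] \to 1$, and combining absolute continuity on the nested events completes the argument. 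The main technical difficulty lies exactly here: $A$ is a global object while Theorem~\ref{T:landscape-iso}.(2) is purely local, and the localization requires care in ensuring that the Williams' bridge structure on the pre-$A$ side remains locally absolutely continuous with respect to a Bessel-$3$ process after the exterior conditioning is integrated out.
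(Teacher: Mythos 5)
Your decomposition and Brownian-model analysis match the paper's in spirit: the paper also reduces to $t=1$, applies Theorem~\ref{T:landscape-iso} with $\bp=(p_1,p_2)$, and identifies the post-cusp side via Pitman's theorem and the pre-cusp side via time-reversal giving a Brownian-motion-plus-conditioned-Bessel-$3$, with the conditioned-vs.-unconditioned discrepancy handled by a lemma asserting $(B+R^{\mathrm{bridge}}) \ll (B+R)$ (Lemma~\ref{L:mutually-abs}). Your $U,V,G$ coordinates are a clean way to write the same calculation that the paper packages as Proposition~\ref{P:pre-kpzfp} in terms of $C_1=B_1-B_2$, $C_2=B_1+B_2$, and the proof of that proposition contains essentially the computation you sketch.

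The genuine gap is exactly the one you flag at the end: the localization. Restricting to $\{A\in K_N\}$ and trying to ``absorb the exterior event into the Radon--Nikodym density'' does not directly close the argument, because your Brownian-model conclusion is for a two-line process on a half-line (where the cusp is an honest first-passage time and Pitman/Williams apply unconditionally), not for a Brownian pair on a compact window $K_N$ conditioned on an exterior event. The paper sidesteps the exterior conditioning entirely. It introduces a surrogate process $W^n_\bp\cL$ on $[-n,\infty)$ that agrees with $W_\bp\cL$ on $[-n,n]$ and is continued by an \emph{independent} two-dimensional Brownian motion past $n$. It then defines surrogate lines
\begin{align*}
M^n_1(x) &= W^n_\bp\cL[(-n,1)\to(x,1)] + \max\bigl(a_1+W_\bp\cL_1(-n),\,a_2+W_\bp\cL_2(-n)+1\bigr),\\
M^n_2(x) &= W^n_\bp\cL[(-n,2)\to(x,1)] + a_2 + W_\bp\cL_2(-n),
\end{align*}
and the surrogate cusp $\tau_n$ of $\mathfrak{h}^n_t:=\max(M^n_1,M^n_2)$. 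The $\max$ in the constant forces the required ordering of initial heights so that Proposition~\ref{P:pre-kpzfp} applies directly. Since (conditionally on $W_\bp\cL(-n)$) the surrogate two-line process on $[-n,\infty)$ is absolutely continuous with respect to a two-dimensional Brownian motion, Proposition~\ref{P:pre-kpzfp} gives $(\tau_n,\mathfrak{h}^n_t-\mathfrak{h}^n_t(\tau_n))\ll(X,[B+R](-X+\cdot))$ for \emph{every} $n$, with no exterior conditioning to integrate out. Finally, by the asymptotics in Theorem~\ref{T:landscape-iso}.1 and the stability in part 1, on any compact $I$ there is a random $N$ with $(\tau_N,\mathfrak{h}^N_t|_I-\mathfrak{h}^N_t(\tau_N))=(A,\mathfrak{h}_t|_I-\mathfrak{h}_t(A))$, so a union bound over $n$ transfers the absolute continuity to the true pair. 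Adopting this truncate-and-append trick would close the gap in your argument; as written, the ``lift to $\cL$'' step does not go through.
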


Theorem \ref{T:KPZ-fixed-point} shows that conditional on its cusp location $A$, $\mathfrak{h}_t(A + \cdot) - \mathfrak h_t(A)$ is locally absolutely continuous with respect to a two-sided Brownian-Bessel process.  If we start from a more general initial condition, then there will be many cusp points as in \eqref{E:ht-cusps}. If we condition on all of these cusp points, then similar ideas could be used to show that the process $\mathfrak{h}_t$ is locally absolutely continuous with respect to a Brownian motion started at the cusp closest to $0$, plus a string of independent Bessel-$3$ bridges running between all cusp points. For brevity, we do not pursue this technical extension here.
	
	\section{Properties of last passage percolation}
	\label{S:properties-BLPP}
	
	\subsection{Basics}
	Recall the definition of multi-point last passage from \eqref{E:multi-point-vectors}. 
 Multi-point last passage percolation satisfies the following useful \textbf{metric composition law}. Its proof is immediate from the definitions.
	
	\begin{prop}
		\label{P:MC-law}
		Let $f = (f_i, i \in \Z)$ be a sequence of continuous functions and let $j \in \Z$. For any endpoints $\bp \in (\R \X \{\dots, j + 2, j + 1\})^k, \bq \in (\R \X \{j, j - 1, \dots \})^k$ we have that
		\begin{equation*}
		f[\bp \to \bq] = \max_{\bz \in \R^k_\le} f[\bp \to (\bz, j+1)] + f[(\bz, j) \to \bq].
		\end{equation*}
		Similarly, for any $x \in \R$ and endpoints $\bp \in ((-\infty, x] \X \Z)^k, \bq \in ([x, \infty) \X \Z)^k$ we have that 
		\begin{equation*}
		f[\bp \to \bq] = \max_{I \in \Z^k_\ge} f[\bp \to (x, I)] + f[(x, I) \to \bq].
		\end{equation*}
		Here and throughout $\Z^k_\ge = \{I \in \Z^k : I_1 \ge I_2 \dots \ge I_k\}$. 
	\end{prop}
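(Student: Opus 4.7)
The plan is to prove both identities by cutting disjoint $k$-tuples of paths at the specified boundary and verifying that the length functional is additive across the cut.

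For the first identity, given a disjoint $k$-tuple $\pi = (\pi_1, \dots, \pi_k)$ from $\bp$ to $\bq$, each path $\pi_i$ must descend from some line $\ge j+1$ to some line $\le j$. Let $z_i$ denote the position at which $\pi_i$ jumps off line $j+1$, i.e. $z_i = \inf\{t : \pi_i(t) < j+1\}$, which lies in $[x_i, y_i]$ by the endpoint assumption. The strict disjointness condition $\pi_i(z) < \pi_{i+1}(z)$ on overlapping time intervals, combined with monotonicity, forces $z_1 \le z_2 \le \dots \le z_k$, so $\bz \in \R^k_\le$. By direct inspection of \eqref{E:pi-length}, the subsum over lines $\ge j+1$ equals the length of $\pi_i|_{[x_i, z_i]}$ regarded as a path ending at $(z_i, j+1)$, while the subsum over lines $\le j$ equals the length of $\pi_i|_{[z_i, y_i]}$ regarded as a path starting at $(z_i, j)$. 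Summing over $i$ yields
\[
\sum_{i=1}^k \|\pi_i\|_f \;\le\; f[\bp \to (\bz, j+1)] + f[(\bz, j) \to \bq],
\]
and taking the supremum over $\pi$ gives one inequality. Conversely, for any fixed $\bz \in \R^k_\le$, disjoint optimizers of the two terms on the right can be concatenated coordinatewise into a disjoint $k$-tuple from $\bp$ to $\bq$ of the same total length; the strict ordering of the endpoints at the cut levels $j+1$ and $j$ ensures that the concatenation inherits the strict disjointness condition, giving the reverse inequality.

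The second identity is established by the same argument with the roles of space and line swapped: cut each $\pi_i$ at the vertical line $t = x$ and let $I_i = \pi_i(x) \in \Z$ be the level at which $\pi_i$ crosses it. Strict disjointness with $\pi_i(z) < \pi_{i+1}(z)$ for $z$ in overlapping time intervals forces $I_1 \ge I_2 \ge \dots \ge I_k$, so $I \in \Z^k_\ge$. The telescoping decomposition of $\|\pi_i\|_f$ splits as a sum of the length of $\pi_i|_{[x_i, x]}$ and the length of $\pi_i|_{[x, y_i]}$, and the concatenation argument is identical. I do not foresee any real obstacle; the only minor bookkeeping is to check that concatenated optimizers remain strictly ordered on overlaps, which is automatic from the ordering of cut endpoints together with the strict ordering of the two halves.
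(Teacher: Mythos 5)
Your cut-and-concatenate argument is the standard one; the paper gives no proof, calling the result immediate from the definitions, and this is surely the argument it has in mind. Two details are off as written. First, for the vertical cut you set $I_i = \pi_i(x)$ and claim that $\pi_i(z) < \pi_{i+1}(z)$ forces $I_1 \ge I_2 \ge \dots \ge I_k$; it forces the opposite, $I_1 \le I_2 \le \dots \le I_k$ (compare the first identity, where the very same disjointness hypothesis correctly gives you $z_1 \le \dots \le z_k$). The proposition's $\Z^k_\ge$ looks like a slip in the paper --- the first identity uses $\R^k_\le$, and the $\pm\infty$ extension of the indices $n_i$ in Proposition \ref{P:basic-optimizers} also implicitly orders the lines increasingly --- but your derivation should have surfaced the mismatch rather than reproducing it.

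Second, the claim that strict disjointness of the concatenation is automatic has a real gap at a shared cut point. When $z_i = z_{i+1} =: z^*$ (and always at the common time $x$ in the vertical identity), the two concatenated paths both take their value at that point from the second-half optimizer $\rho$, but strict disjointness of $\rho$ is only imposed on the open overlap of its domains, which excludes the cut point itself; nothing in the definition prevents $\rho_i(z^*) = \rho_{i+1}(z^*)$ a priori. One extra observation closes the gap: for $t$ just past the cut, $\rho_i(t) < \rho_{i+1}(t)$ take integer values, so $\rho_i(t) \le \rho_{i+1}(t) - 1$, and right-continuity yields $\rho_i(z^*) \le \rho_{i+1}(z^*) - 1 < \rho_{i+1}(z^*)$. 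Alternatively one can prove the lower bound for cut vectors with strictly ordered entries and pass to the general case by continuity. Without some such argument, the concatenation step as stated is incomplete.
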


We will also use the following facts about optimizers. First, suppose that $\pi, \pi'$ are paths from $(x, n)$ to $(y, m)$ and $(x', m')$ to $(y', m')$, respectively. We write $\pi \le \pi'$ and say that $\pi'$ is \textbf{to the right} of $\pi$ if:
\begin{itemize}[nosep]
	\item $x \le x', y \le y', n \le n', m \le m'$.
	\item For $z \in [x, y] \cap [x', y']$, we have $\pi(z) \le \pi'(z)$.
\end{itemize}
For $k$-tuples of paths $\pi, \pi'$, we write $\pi \le \pi'$ if $\pi_i \le \pi'_i$ for all $i \in \II{1, k}$.
Then we have the following, from \cite{dauvergne2021disjoint}. The first part is Lemma 2.2 from that paper, and part (ii) is a slight variant of Lemma 2.3, whose proof goes through verbatim.

\begin{prop}
	\label{P:basic-optimizers}
	Let $f = (f_i, i \in \Z)$, and let $(\bp, \bq)$ be a pair of $k$-tuples such that there is at least one disjoint $k$-tuple (of paths) from $\bp$ to $\bq$.
	\begin{enumerate}[nosep, label=(\roman*)]
		\item  There always exists an optimizer $\pi$ in $f$ from $\bp$ to $\bq$ such that for any optimizer $\tau$ from $\bp$ to $\bq$, we have $\tau \le \pi$. We call $\pi$ the \textbf{rightmost optimizer} from $\bp$ to $\bq$. 
		\item Let $(\bp', \bq')$ be a $k'$-tuple of endpoints such that there is at least one disjoint $k'$-tuple from $\bp'$ to $\bq'$, and let $s \in \Z$.
		
		 Let $(x_i, n_i), (x_i', n_i'), (y_i, m_i), (y_i', m_i')$ denote the coordinates of $\bp, \bp', \bq, \bq'$, and define $x_i, y_i, m_i, n_i = \infty$ for integers $i > k$ and $x_i, y_i, m_i, n_i = - \infty$ for integers $i \le 0$. Similarly define $x_i', y_i', m_i', n_i'$ for integers $i' \notin \II{1, k}$.
		 With these definitions, suppose that $x_i \le x_{i+s}', y_i \le y_{i+s}', n_i \le n_{i+s}', m_i \le m_{i+s}'$ for all $i \in \Z$.
		 
		 Then if $\pi, \pi'$ are the rightmost optimizers from $\bp$ to $\bq$ and $\bp'$ to $\bq'$, we have $\pi_i \le \pi_{i+s}'$ for all $i \in \II{1, k} \cap \II{1 - s, k - s}$.
	\end{enumerate}
\end{prop}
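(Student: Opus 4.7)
The plan for part (i) is a standard compactness-plus-lattice-swap argument. First, note that a disjoint $k$-tuple from $\bp$ to $\bq$ is encoded by the finite list of jump times $\pi_{i,j}$, which lie in a compact product of intervals, and the length functional $\pi \mapsto \|\pi\|_f$ is continuous on this space; so once the set of disjoint $k$-tuples is nonempty, at least one optimizer exists. The key identity is that for any two disjoint $k$-tuples $\pi, \pi'$ with the same endpoints, the componentwise pointwise maximum and minimum $(\pi \vee \pi')_i = \pi_i \vee \pi'_i$ and $(\pi \wedge \pi')_i = \pi_i \wedge \pi'_i$ are again valid disjoint $k$-tuples with the same endpoints, and
\[
\|\pi \vee \pi'\|_f + \|\pi \wedge \pi'\|_f = \|\pi\|_f + \|\pi'\|_f.
\]
This follows because the length decomposes as $\sum_i [f_i(\pi_i) - f_i(\pi_{i+1})]$ over levels, and at each level swapping the path values at crossings simply re-shuffles the contributions between the two tuples. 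Hence the set of optimizers is closed under $\vee$, and the rightmost optimizer is obtained as the supremum over all optimizers, which lies in this set by compactness.

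For part (ii), I would argue by contradiction. Suppose there is an index $i \in \II{1,k} \cap \II{1-s, k-s}$ and a point $z_0$ where $\pi_i(z_0) > \pi'_{i+s}(z_0)$. The extended coordinate convention (setting $x_j, y_j, n_j, m_j$ to $\pm\infty$ outside the natural index range) ensures that at the endpoints of $\pi_i$, the path $\pi'_{i+s}$ (viewed on the overlap of their domains) is at least as far to the right and in the correct vertical position to force $\pi_i$ and $\pi'_{i+s}$ to cross. Pick a maximal interval around $z_0$ on which $\pi_i > \pi'_{i+s}$, and define $\tilde \pi$ from $\pi$ by replacing $\pi_i$ with $\pi_i \wedge \pi'_{i+s}$ on that interval, and $\tilde \pi'$ from $\pi'$ by replacing $\pi'_{i+s}$ with $\pi_i \vee \pi'_{i+s}$. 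The same level-by-level swap identity as in part (i) shows that $\|\tilde\pi\|_f + \|\tilde\pi'\|_f = \|\pi\|_f + \|\pi'\|_f$. Moreover, $\tilde \pi$ still has the original endpoints $\bp, \bq$ (and likewise for $\tilde \pi'$), and on the swap interval one endpoint was common to both $\pi_i$ and $\pi'_{i+s}$ by construction. Since $\pi, \pi'$ are optimizers, so are $\tilde\pi, \tilde\pi'$. But $\tilde\pi'_{i+s}$ lies strictly to the right of $\pi'_{i+s}$ somewhere, contradicting the rightmostness of $\pi'$.

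The main obstacle is verifying that after the swap, $\tilde\pi$ and $\tilde\pi'$ remain disjoint tuples within their respective problems. The disjointness of $\tilde\pi'$ requires checking $\tilde\pi'_{i+s-1} < \tilde\pi'_{i+s} < \tilde\pi'_{i+s+1}$ on the swap interval; the right inequality may fail because the new $\tilde\pi'_{i+s} = \pi_i \vee \pi'_{i+s}$ could collide with $\pi'_{i+s+1}$. To fix this, I would apply the same argument inductively to pairs $(\pi_{i+1}, \pi'_{i+s+1})$, $(\pi_{i+2}, \pi'_{i+s+2})$, etc., propagating the swap across all indices where the ordering is violated, and handle the innermost violation first so that each individual swap preserves disjointness. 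The extended index convention ensures that this cascade terminates cleanly at the boundary of the index range, where one of the paths is effectively absent. Once this careful bookkeeping is in place, the resulting tuples are valid, optimal, and strictly to the right of the rightmost optimizers—the desired contradiction.
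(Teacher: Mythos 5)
The paper does not give its own proof of this proposition; both parts are cited directly from \cite{dauvergne2021disjoint} (Lemmas 2.2 and 2.3 there), so there is no in-paper argument to compare against. Assessing your proposal on its own merits: part (i) is correct and standard. The componentwise max and min of two disjoint $k$-tuples with common endpoints are again disjoint $k$-tuples with the same endpoints, the jump times off each level satisfy $(\pi\vee\pi')_i = \pi_i\vee\pi'_i$ and $(\pi\wedge\pi')_i=\pi_i\wedge\pi'_i$, so the telescoping sums give $\|\pi\vee\pi'\|_f+\|\pi\wedge\pi'\|_f=\|\pi\|_f+\|\pi'\|_f$, and closure of the optimizer set under $\vee$ plus compactness yields a maximum element.

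For part (ii), your one-pair-at-a-time swap has a genuine gap, and even where you do flag an issue the proposed fix is underspecified. You correctly observe that $\tilde\pi'_{i+s}=\pi_i\vee\pi'_{i+s}$ may collide with $\pi'_{i+s+1}$, but you never address the symmetric problem on the other tuple: $\tilde\pi_i=\pi_i\wedge\pi'_{i+s}$ is pushed down and may collide with $\pi_{i-1}$. Your cascade only propagates ``upward'' through $(\pi_{i+1},\pi'_{i+s+1}),(\pi_{i+2},\pi'_{i+s+2}),\dots$, and ``handle the innermost violation first'' does not clearly fix the downward issue; in addition, at a given index there may be several disjoint intervals on which the ordering is violated, so the bookkeeping of which swap interval to use at which index is genuinely delicate. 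The cleaner route, which removes the cascade entirely, is a \emph{global} swap: set $\sigma_j=\pi_j\wedge\pi'_{j+s}$ and $\sigma'_{j+s}=\pi_j\vee\pi'_{j+s}$ on the common domain (extending by $\pi_j$, respectively $\pi'_{j+s}$, where only one is defined), simultaneously for every matched index pair, leaving unmatched paths unchanged. The index-offset hypotheses, with the $\pm\infty$ convention, ensure each $\sigma_j,\sigma'_{j'}$ is a valid path with the original endpoints, that $\sigma,\sigma'$ are disjoint tuples (this uses $\pi_j<\pi_{j+1}$ and $\pi'_{j+s}<\pi'_{j+s+1}$ jointly, with no propagation argument needed), and that $\|\sigma\|_f+\|\sigma'\|_f=\|\pi\|_f+\|\pi'\|_f$. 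So $\sigma'$ is an optimizer with $\sigma'\ge\pi'$, and any violation $\pi_i(z_0)>\pi'_{i+s}(z_0)$ forces $\sigma'_{i+s}(z_0)>\pi'_{i+s}(z_0)$, contradicting that $\pi'$ is rightmost. This is in effect what your cascade would produce if run to completion, but stating it globally sidesteps both the downward-propagation gap and the ordering/termination bookkeeping.
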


Part (ii) above is quite general. The concrete case when $s = 0$ and $k = k'$ is easier to understand. In this case, it amounts to a simple monotonicity of rightmost optimizers as we shift endpoints left and right, or up and down.
	
	\subsection{Pitman transforms}
	
	Recall from the introduction that $\cC^n_0$ is the space of $n$-tuples of continuous functions $f = (f_1, \dots, f_n), f_i:[0,\infty) \to \R$ with $f(0) = 0$. A fruitful way of studying last passage percolation across environments in $\cC^n_0$ is by sorting the environments two lines at a time using a series of two-line Pitman transforms, an approach introduced in \cite{biane2005littelmann}. The Pitman transform is simply the map $W:\cC^2_0 \to \cC^2_0$ from \eqref{E:Wf-definition} for $2$ lines. The following proposition records important properties of this map. 

\begin{prop}
	\label{P:basic-prop-W}
	Let $W:\cC^2_0 \to \cC^2_0$ and $f \in \cC^2_0$. We have
	\begin{enumerate}[nosep, label=(\roman*)]
		\item $Wf_2 \le f_2 \le Wf_1$ and $Wf_2 \le f_1 \le Wf_1$.
		\item For any endpoints $(\bx, 2), (\by, 1)$ with $0 \le x_1$, we have the \textbf{isometry}
		$$
		Wf [(\bx, 2) \to (\by, 1)] = f [(\bx, 2) \to (\by, 1)].
		$$
	\end{enumerate}
\end{prop}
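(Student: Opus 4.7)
I plan to first derive explicit formulas for $Wf_1, Wf_2$ from \eqref{E:Wf-definition}, after which Part (i) becomes immediate and Part (ii) reduces to a concrete real-analysis identity. Unpacking the definition: a path from $(0, 2)$ to $(y, 1)$ is specified by its unique jump time $t \in [0, y]$, so at $k = 1$ the formula reads
\[
Wf_1(y) \;=\; f[(0, 2) \to (y, 1)] \;=\; \sup_{t \in [0, y]} [f_1(y) - f_1(t) + f_2(t)] \;=\; f_1(y) + M(y),
\]
where $M(y) := \sup_{t \in [0, y]}[f_2(t) - f_1(t)]$. At $k = 2$, the disjointness $\pi_1(z) < \pi_2(z)$ on $(0, y)$ forces $\pi_1$ to jump off line $2$ immediately at $t = 0$ and $\pi_2$ at $t = y$, which will give $Wf_1(y) + Wf_2(y) = f_1(y) + f_2(y)$, and hence $Wf_2(y) = f_2(y) - M(y)$.

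Part (i) will then follow at once: $M \ge 0$ (take $t = 0$, using $f(0) = 0$) gives $Wf_1 \ge f_1 \ge Wf_2$, while $M(y) \ge f_2(y) - f_1(y)$ (take $t = y$) gives $Wf_1 \ge f_2 \ge Wf_2$.

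For Part (ii), I will evaluate both sides by parameterizing each disjoint tuple of paths by its jump times. Writing $h := f_2 - f_1$ and $Wh := Wf_2 - Wf_1 = h - 2M$, the single-path case ($k = 1$) gives $f[(x, 2) \to (y, 1)] = f_1(y) - f_2(x) + \sup_{t \in [x, y]} h(t)$, and the analogous formula for $Wf$; substituting the explicit formulas for $Wf_1, Wf_2$ and rearranging should reduce the $k = 1$ isometry to the identity
\[
\sup_{t \in [a, b]} h(t) \;-\; \sup_{t \in [a, b]}[h(t) - 2 M(t)] \;=\; M(a) + M(b), \qquad 0 \le a \le b. \qquad (\star)
\]
A brief case analysis on whether the two path domains $[x_1, y_1], [x_2, y_2]$ overlap will similarly reduce the $k = 2$ isometry to two instances of $(\star)$: in both the overlap and the non-overlap case, disjoint pairs are parameterized by jump times $(t_1, t_2) \in I_1 \times I_2$ for intervals $I_1, I_2$ whose four endpoints together are exactly $\{x_1, x_2, y_1, y_2\}$.

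The main obstacle I anticipate is verifying $(\star)$. My plan is a case analysis: if $M$ is constant on $[a, b]$ (no new record of $h$ is set there), $\sup_{[a, b]}(h - 2M)$ simplifies to $\sup_{[a, b]} h - 2 M(a)$ and $(\star)$ is direct with common value $2M(a) = M(a) + M(b)$. Otherwise $M(a) < M(b)$, in which case $\sup_{[a, b]} h = M(b)$, and by continuity of $h$ the first time $t^* \in (a, b]$ with $h(t^*) = M(a)$ will satisfy $M(t^*) = M(a)$, yielding $h(t^*) - 2M(t^*) = -M(a)$; combined with the universal bound $h - 2M \le -M \le -M(a)$ on $[a, b]$ (using $h \le M$ pointwise and $M \ge M(a)$), this will give $\sup_{[a, b]}(h - 2M) = -M(a)$, so the LHS of $(\star)$ equals $M(b) - (-M(a))$, as required.
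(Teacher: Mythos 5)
Your approach is sound and genuinely different from the paper's, which disposes of part (ii) by citing \cite[Lemma 4.3]{DOV} without proof. You instead give a self-contained elementary argument: the explicit formulas $Wf_1 = f_1 + M$, $Wf_2 = f_2 - M$ with $M(y) = \sup_{[0,y]}(f_2-f_1)$ are correct, part (i) follows immediately, and your reduction of the one-point isometry to the identity
\[
\sup_{[a,b]} h \;-\; \sup_{[a,b]}(h - 2M) \;=\; M(a) + M(b), \qquad 0 \le a \le b,
\]
together with your two-case proof of $(\star)$, is correct. (One small slip: if $h(a) = M(a)$, the first time $t^*$ with $h(t^*) = M(a)$ is $t^* = a$, not a point of $(a,b]$; the conclusion $h(t^*) - 2M(t^*) = -M(a)$ is unaffected.)

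The one genuine gap is that the proposition asserts the isometry for endpoint vectors $\bx, \by$ of \emph{arbitrary} length $k$, while you only treat $k = 1$ and $k = 2$. This matters: the paper applies the proposition inside the proof of Proposition \ref{P:Wsig-extension} to the middle factor $f[(\bz, j+1) \to (\bz', j)]$, which is a $k$-point last passage value for general $k$. Your method does extend, and the key observation is already visible in your remark about the "four endpoints": for any $k$, writing $t_i$ for the jump time of $\pi_i$, the disjointness constraints confine $t_i$ to the interval $[a_i, b_i]$ with $a_i = \max(x_i, y_{i-1})$ and $b_i = \min(y_i, x_{i+1})$ (conventions $y_0 = -\infty$, $x_{k+1} = +\infty$), and the multiset $\{a_1, b_1, \dots, a_k, b_k\}$ equals $\{x_1, \dots, x_k, y_1, \dots, y_k\}$ because $\{a_{i+1}, b_i\} = \{\max(x_{i+1}, y_i), \min(x_{i+1}, y_i)\} = \{x_{i+1}, y_i\}$ for each $i$. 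Applying $(\star)$ to each $[a_i, b_i]$ and summing then gives $\sum_i \sup_{[a_i,b_i]} h = \sum_i\bigl[M(a_i)+M(b_i)\bigr] + \sum_i \sup_{[a_i,b_i]}(h-2M) = \sum_i\bigl[M(x_i)+M(y_i)\bigr] + \sum_i \sup_{[a_i,b_i]}(h-2M)$, which is exactly the required identity. You should also dispose of the case where no disjoint $k$-tuple exists (some $y_i > x_{i+2}$): then both sides equal $-\infty$, since the constraint structure is the same for $f$ and $Wf$.
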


Items (i) is immediate from the definition, and (ii) is \cite[Lemma 4.3]{DOV}.  
We can apply Pitman transforms two lines at a time to understand last passage percolation in $\cC^n_0$. 
For $n \in \N$, an adjacent transposition $\sig_i := (i, i+1) \in S_n$ and $f \in \cC^{\II{1, n}}$, define
\begin{equation}
\label{E:sigma-def}
W_{\sig_i} f = (f_1, \dots, f_{i-1}, W(f_i, f_{i+1}), f_{i+2}, \dots, f_n).
\end{equation}
For more general permutations $\tau \in S_n$, we define
\begin{equation}
\label{E:W-tau}
W_\tau = W_{\sig_{i_1}} \cdots W_{\sig_{i_k}}
\end{equation}
where $\sig_{i_1} \cdots \sig_{i_k} = \tau$ is a \textbf{reduced decomposition} of $\tau$, i.e. a minimal length decomposition of $\tau$ as a product of adjacent transpositions. The right side of \eqref{E:W-tau} is the same for any reduced decomposition, see the discussion preceding Proposition $2.8$ in \cite{biane2005littelmann}. The $n$-line map $W$ in \eqref{E:Wf-definition} is equal to $W_{\rev_n}$, where $\rev_n = n \cdots 1$ is the reverse permutation, see \cite{biane2005littelmann} or Section 3 in \cite{DNV2}.

In \cite{DOV}, the isometry in Proposition \ref{P:basic-prop-W}(ii) was extended to yield an isometry for  $W = W_{\rev_n}$ by using the metric composition law, Proposition \ref{P:MC-law}. We will also extend Proposition \ref{P:basic-prop-W}(ii) to get an isometric property for general permutations; the proof is essentially identical to the proof of \cite[Proposition 4.1]{DOV}.

\begin{prop}
\label{P:Wsig-extension}
Let $\tau \in S_n$ be a permutation and suppose that for some interval $\II{a, b} \sset \II{1, n}$, that $\tau$ is the identity on $\II{a, b}^c$. Then for any endpoints $\bp, \bq$ with $p_i \in \II{n, b} \X [0, \infty)$ and $q_i \in \II{a, 1} \X [0, \infty)$, we have
$$
W_\tau f [\bp \to \bq] = f[\bp \to \bq].
$$ 
\end{prop}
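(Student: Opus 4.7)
My plan is to mirror the strategy the author mentions from \cite[Proposition 4.1]{DOV}: first reduce to the case of a single adjacent transposition, then decompose each side of the desired identity via the metric composition law into a top--middle--bottom sum in which the middle term is a two-line last passage value to which Proposition \ref{P:basic-prop-W}(ii) applies directly.

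The reduction is clean. Since $\tau$ fixes $\II{a,b}^c$ pointwise, its restriction to $\II{a,b}$ is a permutation of $\II{a,b}$, and so $\tau$ admits a reduced decomposition $\tau = \sig_{i_1} \cdots \sig_{i_\ell}$ in which every $i_j$ lies in $\II{a, b-1}$. By \eqref{E:W-tau}, $W_\tau f = W_{\sig_{i_1}} (W_{\sig_{i_2} \cdots \sig_{i_\ell}} f)$, and because $\sig_{i_2} \cdots \sig_{i_\ell}$ also fixes $\II{a,b}^c$, an induction on $\ell$ reduces the claim to the single-transposition case $\tau = \sig_i$ with $i \in \II{a, b-1}$ (the environment now being arbitrary).

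Fix $i \in \II{a, b-1}$ and call the environment $g$. The hypotheses force $\bp$ to have levels $\ge b \ge i+1$ and $\bq$ to have levels $\le a \le i$, so every contributing path crosses both lines $i$ and $i+1$. I will establish the triple decomposition
$$
g[\bp \to \bq] = \sup_{\bu \le \bw}\, g[\bp \to (\bu, i+2)] + (g_i, g_{i+1})[(\bu, 2) \to (\bw, 1)] + g[(\bw, i-1) \to \bq],
$$
where the outer terms only involve lines in $\II{i+2, n}$ and $\II{1, i-1}$ respectively, and the middle term is two-line last passage on $(g_i, g_{i+1})$. When $b \ge i+2$ and $a \le i-1$ this is just two applications of Proposition \ref{P:MC-law}, first at $j = i+1$ and then at $j = i-1$. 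In general it follows by splitting each disjoint $k$-tuple $\pi$ pathwise at the entry times $u_j = \pi_{j, i+2}$ and exit times $w_j = \pi_{j, i}$, with the conventions $u_j = x_j$ when $n_j = i+1$ and $w_j = y_j$ when $m_j = i$.

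Granted the decomposition the conclusion is immediate: the top and bottom terms are unchanged when $g$ is replaced by $W_{\sig_i} g$ since $W_{\sig_i}$ only modifies lines $i$ and $i+1$, and Proposition \ref{P:basic-prop-W}(ii) applied to the pair $(g_i, g_{i+1}) \in \cC^2_0$ gives $(g_i, g_{i+1})[(\bu, 2) \to (\bw, 1)] = W(g_i, g_{i+1})[(\bu, 2) \to (\bw, 1)]$, provided $0 \le u_1$, which is automatic because every spatial coordinate of $\bp$ and $\bq$, and hence of any maximizing $\bu$, is nonnegative. Substituting back yields $W_{\sig_i} g[\bp \to \bq] = g[\bp \to \bq]$, closing the induction. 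The main obstacle I anticipate is the boundary cases $b = i+1$ or $a = i$, where iterated metric composition is not directly applicable and the triple decomposition must instead be verified by hand: a $p_j$ already at level $i+1$ contributes no top descent and forces $u_j = x_j$, and similarly for $\bq$ at level $i$. This requires checking that disjointness of $\pi$ is equivalent to disjointness of the three induced sub-$k$-tuples on lines $> i+1$, on $\{i, i+1\}$, and on lines $< i$, which is the same bookkeeping that underlies the proof of \cite[Proposition 4.1]{DOV}.
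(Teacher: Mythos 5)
Your argument matches the paper's proof: both reduce via \eqref{E:W-tau} to a single adjacent transposition $\sigma_j$, isolate lines $j$ and $j+1$ using the metric composition law (Proposition \ref{P:MC-law}) to get a triple decomposition whose outer terms are invariant under $W_{\sigma_j}$, and apply Proposition \ref{P:basic-prop-W}(ii) to the two-line middle term. The boundary cases you flag (endpoints sitting exactly at levels $j$ or $j+1$, or $j \in \{1, n-1\}$) are dispatched in the paper with the one-line remark that these cases have ``fewer terms on the right side of \eqref{E:f-p-q}''; your more explicit accounting of the entry and exit times is the same bookkeeping made precise.
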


\begin{proof}
By \eqref{E:W-tau}, it suffices to prove the proposition when $\tau$ is an adjacent transposition $\sig_j$ and $\II{a, b} = \{j, j + 1\}$. First assume $j \ne 1, n-1$. For any endpoints $\bp, \bq$ of size $k$ with $p_i \in \II{n, j+1} \X [0, \infty)$ and $q_i \in \II{j, 1} \X [0, \infty)$ for all $i$, Proposition \ref{P:MC-law} ensures that
\begin{equation}
\label{E:f-p-q}
f[\bp \to \bq] = \max_{\bz, \bz' \in \R^k_\le} f[\bp \to (\bz, j+2)] + f[(\bz, j+1) \to (\bz', j)] + f[(\bz', j-1) \to \bq].
\end{equation}
Under the maximum in \eqref{E:f-p-q}, the first and third terms are unchanged when we apply $W_{\sig_j}$ since $W_{\sig_j} f_k = f_k$ for $k \ne j +1, j$. The middle term is unchanged by Proposition \ref{P:basic-prop-W}(ii). Hence the left side of \eqref{E:f-p-q} is unchanged when we apply $W_{\sig_j}$. The cases when $j = 1, n-1$ are similar, except there will be fewer terms on the right side of \eqref{E:f-p-q}.
\end{proof}

	Next, for $j \le i$ define the permutation
	$
	\tau_{i, j} = \sig_{j} \cdots \sig_{i-1}.
	$
	We use the convention that $\tau_{i, i} = \id_n$. The maps $W_{\tau_{i, j}}$ are related to last passage by the following lemma, Lemma 3.10 from \cite{DNV2}. 
%	We include a proof here for completeness.
	
	\begin{lemma}
		\label{L:W-lemma}
		Let $j \le i \in \II{1, n}$ and $f \in \cC^n_0$. For all $y \ge 0$, we have
		\begin{equation}
		\label{E:f0iyj}
		f[(0, i) \to (y, j)] = W_{\tau_{i, j}} f_j(y).
		\end{equation}
	\end{lemma}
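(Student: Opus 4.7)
The plan is to prove the lemma by induction on the gap $i - j \ge 0$. The base case $i = j$ is immediate: $\tau_{j,j} = \id_n$ gives $W_{\tau_{j,j}} f_j = f_j$, while the only path from $(0, j)$ to $(y, j)$ stays on line $j$ throughout and contributes $f_j(y) - f_j(0) = f_j(y)$.

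For the inductive step, I would fix $(i, j)$ with $i - j = m + 1$ and exploit the reduced decomposition $\tau_{i, j} = \sigma_j \tau_{i, j+1}$, so that $W_{\tau_{i, j}} f = W_{\sigma_j}(W_{\tau_{i, j+1}} f)$. Two facts about the intermediate environment $W_{\tau_{i, j+1}} f$ are needed. First, since $\tau_{i, j+1}$ is the identity outside $\II{j+1, i}$, line $j$ is untouched: $W_{\tau_{i, j+1}} f_j = f_j$. Second, by the induction hypothesis applied to the pair $(i, j+1)$, its line $j+1$ is
\[
g(y) \,:=\, W_{\tau_{i, j+1}} f_{j+1}(y) \,=\, f[(0, i) \to (y, j+1)],
\]
and one checks $g(0) = 0$ from the definition. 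Applying the 2-line Pitman transform formula to the pair $(f_j, g)$ (with $f_j$ on top) via Proposition \ref{P:basic-prop-W} and equation \eqref{E:Wf-definition} specialized to $n=k=1$, one obtains
\[
W_{\tau_{i, j}} f_j(y) \;=\; W(f_j, g)_1(y) \;=\; \sup_{z \in [0, y]} \bigl[\,f_j(y) - f_j(z) + g(z)\,\bigr].
\]

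To finish, I would match this expression with $f[(0, i) \to (y, j)]$ using the single-path metric composition law, Proposition \ref{P:MC-law}, applied at the interface between levels $j+1$ and $j$: any path from $(0, i)$ to $(y, j)$ can be decomposed according to the time $z \in [0, y]$ at which it jumps from line $j+1$ to line $j$, and the trailing piece on line $j$ contributes exactly $f_j(y) - f_j(z)$. This yields
\[
f[(0, i) \to (y, j)] \,=\, \max_{z \in [0, y]} \bigl[\, f[(0, i) \to (z, j+1)] + f_j(y) - f_j(z)\, \bigr],
\]
which agrees term-for-term with the previous display after substituting $g(z) = f[(0, i) \to (z, j+1)]$, closing the induction.

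I do not foresee a serious obstacle here; the argument is essentially bookkeeping. The small points that require care are (i) checking that $\sigma_j \tau_{i, j+1}$ is indeed a reduced word for $\tau_{i, j}$, so that $W_{\tau_{i, j}}$ is consistently defined via this decomposition; (ii) keeping the orientation conventions straight when applying the 2-line Pitman formula (top line $=$ index $j$, bottom line $=$ index $j+1$); and (iii) confirming that the supremum range $z \in [0, y]$ in the Pitman formula matches the range of valid intermediate jump times in the metric composition split, which holds because a path from $(0, i)$ cannot reach line $j+1$ at a negative time and must reach $(y, j)$ by time $y$.
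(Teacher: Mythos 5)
Your proof is correct. The paper itself does not prove this lemma — it simply cites it as Lemma~3.10 of \cite{DNV2} — so there is no in-paper argument to compare against, but your inductive argument is sound and self-contained. The reduced decomposition $\tau_{i,j}=\sigma_j\tau_{i,j+1}$ is correct (the permutation $\tau_{i,j}$ has exactly $i-j$ inversions, matching the word length), line $j$ is indeed unaffected by $W_{\tau_{i,j+1}}$ since each $\sigma_m$ with $m\ge j+1$ touches only lines $\ge j+1$, and the single-path case of Proposition~\ref{P:MC-law} applied at the interface between levels $j+1$ and $j$ exactly reproduces the Pitman sup formula $\sup_{z\in[0,y]}\bigl[g(z)+f_j(y)-f_j(z)\bigr]$ with $g(z)=f[(0,i)\to(z,j+1)]$; the boundary normalization $g(0)=0$ holds because the unique path from $(0,i)$ to $(0,j+1)$ is vertical with length zero. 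One cosmetic slip: you mean specializing \eqref{E:Wf-definition} to $n=2,\ k=1$, not $n=k=1$.
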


%\begin{proof}
%We fix $i$, and proceed by induction downwards on $j \le i$. The base case $j = i$ follows by definition. For $j < i$, by the metric composition law, Proposition \ref{P:MC-law}, we have 
%\begin{align*}
%f[(0, i) \to (y, j)] &= \sup_{z \le y} f[(0, i) \to (y, j + 1)] -f_j(z)+f_j(y)
%\\
%&=\sup_{z\le y} W_{\tau_{i, j + 1}} f_{j+1}(z) -f_j(z)+f_j(y) \\
%&= W_{\tau_{i, j}} f_j(y),
%\end{align*}
%where the second equality follows from the inductive hypothesis, and the third equality is simply the definition of $W_{\sig_j}$.
%\end{proof}

The next proposition builds on Lemma \ref{L:W-lemma}. Recall that for a finite set $J = \{j_1 < \dots < j_k\} \sset \Z$ and $x \in \R$, we use the shorthand
	$
	(x, J) = ((x, j_1), \dots, (x, j_k)).
	$
	\begin{prop}
	\label{P:top-lines}
	Let $I = \{i_1 < \dots < i_k\} \sset \II{1, n}, f \in \cC^n_0,$ set $\tau_I = \tau_{i_k, k} \cdots \tau_{i_1, 1}$, and define $m:I \to \II{1, n}$ by $m(i_j) = j$ for all $j$. For any nonempty subset $J \sset I$ and any vector $\by \in \R^{|J|}_\le$ with $0 \le y_1$, we have
	\begin{equation}
	\label{E:WtauI-isometry}
	W_{\tau_I} f [(0, m(J)) \to (\by, 1)] = f[(0, J) \to (\by, 1)].
	\end{equation}
	We can also explicitly describe of the functions $W_{\tau_I} f_1, \dots, W_{\tau_I} f_k$ as follows:
	$$
	\sum_{i=1}^\ell W_{\tau_I} f_i(y) = f[(0,I^\ell) \to (y^\ell, 1)],
	$$
	for all $0 \le y, \ell \in \II{1, k}$, where $I^\ell = \{i_1, \dots, i_\ell\}$. 
	\end{prop}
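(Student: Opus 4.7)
My strategy is to establish (A) by induction on $k = |I|$ and then deduce (B) as a corollary. The reduction (B) $\Leftarrow$ (A) is immediate: for any environment $g$ on $n$ lines, a disjoint $\ell$-tuple of paths from $((0,1),\ldots,(0,\ell))$ to $((y,1),\ldots,(y,1))$ (all endpoints at the same spatial coordinate $y \ge 0$) is forced to be rigid by disjointness---path $i$ must remain on line $i$ throughout $[0,y)$ and jump to line $1$ exactly at $y$---so $g[(0,m^\ell) \to (y^\ell,1)] = \sum_{i=1}^\ell g_i(y)$. Applying this with $g = W_{\tau_I} f$ together with (A) in the special case $J = I^\ell$, $\by = (y,\ldots,y)$, yields (B).

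For (A), the base case $k=1$ is a direct restatement of Lemma \ref{L:W-lemma}. In the inductive step I decompose $\tau_I = \tau_{i_k,k}\tau_{I'}$ with $I' = I \setminus \{i_k\}$ and set $h = W_{\tau_{I'}} f$. Because $\tau_{i_k,k}$ has support $\II{k,i_k}$, it leaves lines $\II{1,k-1}$ unchanged, so $(W_{\tau_I} f)_i = h_i$ for $i < k$; and Lemma \ref{L:W-lemma} applied to the environment $h$ gives the key formula $(W_{\tau_I} f)_k(y) = h[(0,i_k) \to (y,k)]$. If $i_k \notin J$, then $m(J) \subset \II{1,k-1}$ and every path from $(0,m(J))$ to $(\by,1)$ stays in lines $\II{1,k-1}$, where $W_{\tau_I} f$ agrees with $h$; the desired identity then reduces to the inductive hypothesis applied to $h$.

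If $i_k \in J$, write $J = J' \sqcup \{i_k\}$, so that $m(J) = m(J') \sqcup \{k\}$. The last path starts at $(0,k)$ on the left and at $(0,i_k)$ on the right, and I decompose it via the metric composition law (Proposition \ref{P:MC-law}) at the level between lines $k-1$ and $k$. On the left, the line-$k$ portion contributes $h[(0,i_k) \to (z,k)]$ by Lemma \ref{L:W-lemma}, and the remaining portion (on lines $\II{1,k-1}$) combines with the other $|J'|$ paths into a multi-point last passage in $h$ that is handled by the inductive hypothesis. A parallel decomposition of the analogous path on the right, together with Proposition \ref{P:Wsig-extension} applied to $\tau_{I'}$ (whose support $\II{1,i_{k-1}}$ guarantees the isometry $h[\bp \to \bq] = f[\bp \to \bq]$ for $\bp$ in lines $\ge i_{k-1}$ and $\bq$ in lines $\le 1$), matches the two expressions term by term.

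\emph{Main obstacle.} The crux is the case $i_k \in J$: the top path interacts with the remaining paths through the disjointness constraint, and one must show that this coupling is exactly captured by the Pitman transform structure of $h$. The two ingredients making this possible are Lemma \ref{L:W-lemma}, which identifies $(W_{\tau_I} f)_k$ with a single-point last passage in $h$, and Proposition \ref{P:Wsig-extension}, which controls the difference between $f$ and $h$ on paths in the appropriate line range. Carefully aligning these ingredients through the metric composition law is the main technical step; once that is done, the inductive hypothesis closes the argument.
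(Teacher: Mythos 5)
The reduction of the second display to the first and the base case $k=1$ are both fine, and the case $i_k \notin J$ of your inductive step is correct. The gap is in the case $i_k \in J$, which is the whole content of the proposition.

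After your decomposition at the level between lines $k-1$ and $k$, together with Lemma~\ref{L:W-lemma} applied to $h = W_{\tau_{I'}} f$, you arrive (correctly) at
\[
W_{\tau_I} f[(0,m(J)) \to (\by,1)] \;=\; \max_{z \ge 0} \; h[(0,i_k) \to (z,k)] \;+\; h\bigl[\bigl((0,m(J')),(z,k-1)\bigr) \to (\by,1)\bigr],
\]
which, undoing the composition in $h$, equals $h[((0,m(J')),(0,i_k)) \to (\by,1)]$. At this point you assert that the remaining multi-point problem in $h$ is ``handled by the inductive hypothesis,'' and that a parallel decomposition of $f[(0,J) \to (\by,1)]$ matches it ``term by term'' via Proposition~\ref{P:Wsig-extension}. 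Neither tool applies as stated. Your inductive hypothesis for $I'$ gives $h[(0,m'(J^*)) \to (\by,1)] = f[(0,J^*) \to (\by,1)]$ only for subsets $J^* \subset I'$ with \emph{all} starting points at time $0$; the decomposed sub-problem carries an extra point $(z,k-1)$ with $z > 0$, or equivalently an extra undesorted point $(0,i_k)$, and the hypothesis says nothing about either. Proposition~\ref{P:Wsig-extension} applied to $\tau_{I'}$ (support $\II{1,i_{k-1}}$) requires the \emph{endpoints} to lie on line $1$, so it does not identify $h[(0,i_k) \to (z,k)]$ with any last passage value in $f$, nor does it control the multi-point piece with endpoint $(z,k-1)$. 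If you try to fix this by strengthening the inductive hypothesis to allow a floating extra source $(z,j)$ with $z > 0$, that stronger claim is already false for $k=1$ (compare $W_{\tau_{i_1,1}}f[(z,1)\to(y,1)] = W_{\tau_{i_1,1}}f_1(y) - W_{\tau_{i_1,1}}f_1(z)$ with $f[(z,i_1)\to(y,1)]$).

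The paper avoids this by running the induction in the opposite direction: it sorts $i_1,\dots,i_k$ into positions $1,\dots,k$ one at a time, via intermediate maps $m_\ell$ and transforms $W_{\tau_\ell} = W_{\tau_{i_\ell,\ell}\cdots\tau_{i_1,1}}$, proving at each step
\[
W_{\tau_{i_\ell,\ell}}\, g\,[(0,m_\ell(J)) \to (\by,1)] \;=\; g\,[(0,m_{\ell-1}(J)) \to (\by,1)],
\]
where $m_\ell(J)$ and $m_{\ell-1}(J)$ differ only at $i_\ell$. The key device, which has no analogue in your plan, is a path-truncation argument: given an optimizer $\pi$ for the right side, the path $\pi_\ell$ emanating from $(0,i_\ell)$ is replaced by $\rho = \ell \wedge \pi_\ell$, and the chain of identities
\[
\|\pi_\ell|_{[0,y]}\|_{W_{\tau_{i_\ell,\ell}} g} \;\le\; W_{\tau_{i_\ell,\ell}} g[(0,i_\ell) \to (y,\ell)] \;=\; g[(0,i_\ell) \to (y,\ell)] \;=\; W_{\tau_{i_\ell,\ell}} g_\ell(y)
\]
(using Proposition~\ref{P:Wsig-extension} and Lemma~\ref{L:W-lemma} respectively) shows $\rho$ is at least as good. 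This keeps every starting point at time $0$ and never requires comparing decomposed sub-problems at intermediate lines, which is precisely where your induction stalls.
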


	\begin{proof} 
		For each $\ell \in \II{0, k}$, define a map $m_\ell:I \to \II{1, n}$ by setting $m_\ell(i_j) = j$ for $j \le \ell$, and $m_\ell(i_j) = i_j$ otherwise.
		Let $\tau_\ell = \tau_{i_{\ell}, \ell} \cdots \tau_{i_1, 1}$. We will inductively prove the stronger claim that for every $\ell \in \II{0, k}$, for any $J, \by$ as in the proposition, we have
		\begin{equation}
		\label{E:W-tauell}
		W_{\tau_\ell} f [(0, m_\ell(J)) \to (\by, 1)] = f[(0, J) \to (\by, 1)].
		\end{equation}
		The base case when $\ell = 0$ is trivially true. Now suppose that the claim holds at $\ell-1$. It is enough to show that for any $g \in \cC^n_0$ and $J, \by$ as in the proposition, we have 
		\begin{equation}
		\label{E:W-tauell-2}
		W_{\tau_{i_\ell, \ell}} g [(0, m_\ell(J)) \to (\by, 1)] = g [(0, m_{\ell-1}(J)) \to (\by, 1)].
		\end{equation}
		Indeed, \eqref{E:W-tauell-2} implies \eqref{E:W-tauell} by taking $g = W_{\tau_{\ell-1}} f$ and applying the inductive hypothesis. First, by Proposition \ref{P:Wsig-extension}, \eqref{E:W-tauell-2} holds with $m_\ell(J)$ replaced by $m_{\ell-1}(J)$. This immediately implies \eqref{E:W-tauell-2} if $i_\ell \notin J$, so from now on we may assume $i_\ell \in J$. Now,  
		\begin{equation}
		\label{E:W-tauell-3}
		W_{\tau_{i_\ell, \ell}} g [(0, m_\ell(J)) \to (\by, 1)] \le W_{\tau_{i_\ell, \ell}} g [(0, m_{\ell-1}(J)) \to (\by, 1)],
		\end{equation}
		since all disjoint $k$-tuples from $(0, m_\ell(J))$ to $(\by, 1)$ are also disjoint $k$-tuples from $(0, m_{\ell-1}(J))$ because $m_\ell \le m_{\ell - 1}$. For the opposite inequality, consider any disjoint $k$-tuple $\pi$ from $(0, m_{\ell-1}(J))$ to $(\by, 1)$. Let $\pi_\ell$ be the path starting at $(0, i_\ell)$, and let $y = \sup \{t \ge 0 : \pi(t) \ge \ell\}$, where we take $y = 0$ if this set is empty. Then 
		$$
		\|\pi_\ell|_{[0, y]}\|_{W_{\tau_{i_\ell, \ell}} g} \le W_{\tau_{i_\ell, \ell}} g[(i_\ell, 0) \to (y, \ell)] = g[(i_\ell, 0) \to (y, \ell)] = W_{\tau_{i_\ell, \ell}} g_{\ell}(y).
		$$
		The first equality follows from the isometry in Proposition \ref{P:Wsig-extension}, and the second equality follows from Lemma \ref{L:W-lemma}. Therefore if we define a new path $\rho = \ell \wedge \pi_\ell$, we have
		$$
		\|\pi_\ell\|_{W_{\tau_{i_\ell, \ell}} g} \le \|\rho\|_{W_{\tau_{i_\ell, \ell}} g}.
		$$
		Moreover, replacing the path $\pi_\ell$ with $\rho$ in the $k$-tuple $\pi$ yields a new disjoint $k$-tuple $\pi_*$ from $(0, m_\ell(J))$ to $(\by, 1)$ since all paths starting above $\pi_\ell$ started at lines in $\II{1, \ell-1}$. This gives the opposite inequality in \eqref{E:W-tauell-3}, yielding \eqref{E:W-tauell-2}.
	\end{proof}

\begin{remark}
	\label{R:melons-at-other-times}
	Moving forward, we will also want to apply Pitman transforms to more general environments $f$, opened up at times other than $0$. Let $f = (f_1, \dots, f_n), f_i:\R \to \R$. Letting $T_a f(x) = f(x + a) - f(a)$, for $\tau \in S_n$, define
	\begin{equation}
	\label{E:W-shifted}
	W_{a, \tau} f(x) = W_\tau T_a f (x - a)
	\end{equation}
	so that $W_{a, \tau} f_i:[a, \infty) \to \R$ for all $i \in \II{1, n}$.
\end{remark}
\subsection{Brownian motion and the Pitman transform}

When $B \in \cC^2_0$ is a Brownian motion, Pitman's $2M-X$ theorem \cite{pitman1975one} identifies the law of $WB$.

\begin{theorem}
	\label{T:Pitman-Brownian}
	Let $B = (B_1, B_2) \in \cC^2_0$ be two independent standard Brownian motions. Then $(WB_1 + WB_2, WB_1 - WB_2) \eqd (\sqrt{2} R, \sqrt{2} B)$, where $R$ is a Bessel-$3$ process, $B$ is a standard Brownian motion, and the two objects are independent.
\end{theorem}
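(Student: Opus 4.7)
The plan is to compute $WB_1$ and $WB_2$ directly from the defining multi-point last passage identities and then reduce the joint law of $(WB_1+WB_2,\, WB_1-WB_2)$ to the classical Pitman $2M - X$ theorem. Everything is bookkeeping once the explicit formulas are in place; the only genuine input is Pitman's theorem itself, used as a black box.

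First I would unpack the definition $\sum_{i=1}^k Wf_i(y) = f[(0,2)^k \to (y,1)^k]$ at $k=1,2$. At $k=1$, a single path from $(0,2)$ to $(y,1)$ is determined by its switching time $z \in [0,y]$ from line $2$ to line $1$, so
\[
WB_1(y) \;=\; B_1(y) + M(y), \qquad M(y) := \sup_{0 \le z \le y}\bigl(B_2(z) - B_1(z)\bigr).
\]
At $k=2$, the disjointness constraint forces one path on each line, so $WB_1(y) + WB_2(y) = B_1(y) + B_2(y)$, and hence $WB_2(y) = B_2(y) - M(y)$.

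Rewriting in sum/difference coordinates, the sum $WB_1 + WB_2 = B_1 + B_2$ is immediately a Brownian motion of variance $2$. Setting $X := B_2 - B_1$, another variance-$2$ Brownian motion with running supremum $M$, the difference satisfies
\[
WB_1(y) - WB_2(y) \;=\; 2M(y) - X(y).
\]
Pitman's $2M-X$ theorem \cite{pitman1975one} applied to the standard Brownian motion $X/\sqrt{2}$ identifies this process with $\sqrt{2}$ times a Bessel-$3$ process. So the two marginals match the two factors on the right-hand side of the claimed distributional equality.

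For independence, observe that $(B_1 + B_2,\, B_2 - B_1)$ is an orthogonal linear image of the jointly Gaussian pair $(B_1, B_2)$ and hence consists of two independent Brownian motions. Since $WB_1 + WB_2$ is a measurable function of $B_1 + B_2$ alone, while $WB_1 - WB_2 = 2M - X$ is a measurable function of $X = B_2 - B_1$ alone, the two components of $(WB_1 + WB_2,\, WB_1 - WB_2)$ are independent. Combining the two marginal identifications with independence yields the claimed joint law. The only genuinely nontrivial step in the plan is the invocation of Pitman's theorem; every other step is a direct unwinding of the definition of $W$.
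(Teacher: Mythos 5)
Your argument is correct, and it is essentially the only argument available: the paper offers no proof of this statement, simply citing Pitman's theorem, and your reduction via the explicit formulas $WB_1 = B_1 + M$, $WB_2 = B_2 - M$ (which follow exactly as you say from the $k=1$ and $k=2$ cases of the defining identity) is the standard way to make that citation precise. One point needs fixing, though. Your computation shows that the \emph{sum} $WB_1 + WB_2 = B_1 + B_2$ is the Brownian component and the \emph{difference} $WB_1 - WB_2 = 2M - X$ is the Bessel component, i.e.\ $(WB_1 + WB_2,\, WB_1 - WB_2) \eqd (\sqrt{2}\,B, \sqrt{2}\,R)$ --- whereas the theorem as printed asserts $(\sqrt{2}\,R, \sqrt{2}\,B)$, with the roles of $R$ and $B$ in the opposite order. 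Your final claim that ``the two marginals match the two factors on the right-hand side'' therefore isn't literally true; they match only after swapping the factors. The printed ordering is a typo in the paper (it cannot be right as stated, since $B_1 + B_2$ is a genuine Brownian motion, not a nonnegative process, while $WB_1 - WB_2 \ge 0$ by Proposition \ref{P:basic-prop-W}(i) and so must be the Bessel-$3$ component), but you should say this explicitly rather than silently passing over the discrepancy. With that caveat recorded, the marginal identifications and the independence argument via the orthogonal decomposition $(B_1+B_2, B_2-B_1)$ are both complete and correct.
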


Theorem \ref{T:Pitman-Brownian} implies that for any interval $[a, b] \sset (0, \infty)$, we have $WB|_{[a, b]} \ll B|_{[a, b]}$. We will need a version of this result for iterated Pitman transforms $W_{\sig}$. This will require the following strengthening of this absolute continuity observation. 

\begin{prop}
	\label{P:brown-abs}
	Let $B \in \cC^n_0$ be a sequence of $n$ independent Brownian motions. For any $\tau \in S_n$ and any $[a, b] \sset (0, \infty)$, we have $W_\tau B|_{[a, b]} \ll B|_{[a, b]}$.
\end{prop}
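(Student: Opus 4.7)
The plan is induction on the length $\ell(\tau)$, peeling off one adjacent transposition at a time.

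The base case $\ell(\tau)=0$ is trivial; the base case $\ell(\tau)=1$ reduces to the two-line statement for $\tau = \sigma_i$, where $W_{\sigma_i}$ acts as the identity on coordinates $j \ne i, i+1$ and as the two-line Pitman transform on lines $(i, i+1)$. By Theorem \ref{T:Pitman-Brownian}, the transformed pair decomposes (after $\sqrt{2}$ rescaling) into independent Brownian and Bessel-$3$ components. Since the Bessel-$3$ process satisfies $dR = dW + R^{-1}\, dt$ with drift bounded on $\{R \ge \eps\}$, Girsanov's theorem with a standard localization at $R = \eps$ yields $R|_{[a,b]} \ll W|_{[a,b]}$ on any $[a,b] \sset (0,\infty)$, completing the base case.

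For the inductive step, write $\tau = \sigma_i \tau'$ with $\ell(\tau') = \ell(\tau)-1$ and set $X = W_{\tau'} B$, so $W_\tau B = W_{\sigma_i} X$. By induction, $X|_{[a',b']} \ll B|_{[a',b']}$ for every $[a',b'] \sset (0,\infty)$. Fix $a' \in (0,a)$ and let $M(y) = \sup_{z \le y}(X_{i+1}-X_i)(z)$. For $y \in [a,b]$ one can write $M(y) = \max\bigl(M(a'),\, \sup_{z \in [a',y]}(X_{i+1}-X_i)(z)\bigr)$, so $W_\tau B|_{[a,b]} = \Phi(M(a'),\, X|_{[a',b]})$ for a continuous functional $\Phi$. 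The same expression with $X$ replaced by $B$ gives $W_{\sigma_i} B|_{[a,b]} = \Phi(M^B(a'),\, B|_{[a',b]})$, where $M^B(a') = \sup_{z \le a'}(B_{i+1}-B_i)(z)$. It therefore suffices to establish the joint absolute continuity
\[
(M(a'),\, X|_{[a',b]}) \ll (M^B(a'),\, B|_{[a',b]}),
\]
since pushforward by $\Phi$ then yields $W_\tau B|_{[a,b]} \ll W_{\sigma_i} B|_{[a,b]} \ll B|_{[a,b]}$ via the base case.

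The main obstacle is exactly this joint absolute continuity: the inductive hypothesis only compares $X$ with Brownian motion on intervals bounded away from zero, whereas $M(a')$ depends on the full history $X|_{[0,a']}$, on which $X$ may be singular with respect to Brownian motion. My plan to overcome this is to approximate $M(a')$ by $M_\eps(a') := \sup_{z \in [\eps, a']}(X_{i+1}-X_i)(z)$ for small $\eps > 0$; this is a functional of $X|_{[\eps,b]}$, hence directly comparable to its Brownian analogue via the inductive hypothesis. Continuity of $X$ together with the boundary condition $(X_{i+1}-X_i)(0) = 0$ forces the contribution of $[0,\eps]$ to vanish as $\eps \downarrow 0$, so either the maximum is almost surely attained at a strictly positive time and $\{M(a') = M_\eps(a')\}$ has probability tending to one, or the difference $X_{i+1} - X_i$ is almost surely nonpositive on $[0,a']$ (so that $M \equiv 0$ a.s.\ and $W_{\sigma_i} X = X$, in which case the inductive hypothesis applies directly). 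A careful limiting argument, combined with the Markov property of the iterated Pitman transform (which propagates inductively from the two-line case via the sum/difference decomposition into Brownian and Bessel-$3$ components), then yields the joint absolute continuity and completes the induction.
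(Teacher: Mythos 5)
Your overall strategy---inducting by peeling off one adjacent transposition and reducing to a two-line Pitman/Bessel comparison---is the same strategy the paper uses, and the two-line base case via Girsanov is fine. But the inductive step has a genuine gap, and it sits exactly where you flag the ``main obstacle.''

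The dichotomy you assert is not exhaustive. Writing $D = X_{i+1} - X_i$, the two alternatives you list are (a) $\P(M(a')>0) = 1$ and (b) $\P(D \le 0 \text{ on } [0,a']) = 1$. A priori there is a third possibility $\P(M(a')>0) \in (0,1)$, and in that case the joint absolute continuity $(M(a'),\,X|_{[a',b]}) \ll (M^B(a'),\,B|_{[a',b]})$ you are aiming for \emph{fails} outright, since $M^B(a')>0$ almost surely while $M(a')=0$ would hold with positive probability. Even case (b) does not rescue you the way you claim: $M(a')=0$ a.s.\ does not imply $W_{\sigma_i}X = X$ on $[a,b]$, because $M(y) = \max(0,\sup_{[a',y]}D)$ can still be strictly positive on $[a,b]$. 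So the entire inductive step hinges on \emph{proving} that you are in case (a)---more precisely, that $\max_{x\in[0,\eps]} D(x) > 0$ a.s.\ for every $\eps>0$---and your sketch provides no mechanism for doing so. The appeal to a ``Markov property of the iterated Pitman transform'' does not help; $W_{\tau'}B$ is not a Markov process for general $\tau'$, and the two-line Brownian/Bessel decomposition does not propagate to give one.

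This nondegeneracy is precisely the hypothesis of Lemma \ref{L:abs-criterion}, which the paper imports from \cite{ganguly2021local} and which performs the $\eps\downarrow 0$ limiting step you are trying to reconstruct. The real work in the paper's proof is verifying that hypothesis. This is done by choosing the induction carefully (writing $\tau = \tau_{n,m}\rho$ with $\rho(n)=n$, then inducting backwards on $m$), using the monotonicity in Proposition \ref{P:basic-prop-W}(i) and the isometry in Proposition \ref{P:Wsig-extension} to sandwich the relevant difference between $B_n(x)$ and $B[(0,n-1)\to(x,1)]$, and then invoking Blumenthal's $0$--$1$ law and ergodicity to show that $\{x: B_n(x) - B[(0,n-1)\to(x,1)] > 0\}$ accumulates at $0$ almost surely. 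Your proposal would need a comparable argument; without it, the induction does not close.
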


 We use the following lemma to help with the inductive step.

\begin{lemma}[Lemma 4.5, \cite{ganguly2021local}]
	\label{L:abs-criterion}
	Let $X \in \cC^2_0$ be a random function such that for every $0 < \ep < T$, we have $X|_{[\ep, T]} \ll B|_{[\ep, T]}$ where $B \in \cC^2_0$ is a standard Brownian motion. Suppose also that a.s.,
	\begin{equation}
	\label{E:max-xep}
	\max_{x \in [0, \ep]} X_2(x) - X_1(x) > 0
	\end{equation}
	for all $\ep > 0$. Then for every $0 < \ep < T$, we have $WX|_{[\ep, T]} \ll B|_{[\ep, T]}$.
\end{lemma}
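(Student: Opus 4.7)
My plan is to reduce the Pitman transform applied to $X$ on $[\ep, T]$ to a deterministic functional of $X|_{[\bar\ep, T]}$ alone (for sufficiently small $\bar\ep < \ep$) on a good event of high probability, then transfer the resulting absolute continuity question from $X$ to $B$ via the hypothesis, and finally verify by direct computation using Theorem \ref{T:Pitman-Brownian} that the relevant functional of $B|_{[\bar\ep, T]}$ has law absolutely continuous with respect to $B|_{[\ep, T]}$.

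Write $g = X_2 - X_1$, $M(y) = \sup_{0 \le z \le y} g(z)$, and $M_{\bar\ep}(y) = \sup_{\bar\ep \le z \le y} g(z)$ for $\bar\ep > 0$, so that $WX = (X_1 + M, X_2 - M)$ and $M(y) = M(\bar\ep) \vee M_{\bar\ep}(y)$ for $y \ge \bar\ep$. Define the good event
$$
E_{\bar\ep} = \{M(\bar\ep) \le M_{\bar\ep}(\ep)\}.
$$
On $E_{\bar\ep}$ we have $M|_{[\ep, T]} = M_{\bar\ep}|_{[\ep, T]}$, so $WX|_{[\ep, T]}$ is a deterministic functional $\Psi_{\bar\ep}(X|_{[\bar\ep, T]})$ of $X|_{[\bar\ep, T]}$, namely $\Psi_{\bar\ep}(f)(y) = (f_1(y) + \sup_{\bar\ep \le z \le y}(f_2-f_1)(z),\, f_2(y) - \sup_{\bar\ep \le z \le y}(f_2-f_1)(z))$. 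Continuity of $X$ at $0$ with $X(0) = 0$ yields $M(\bar\ep) \to 0$ almost surely as $\bar\ep \downarrow 0$, while \eqref{E:max-xep} yields $M_{\bar\ep}(\ep) \uparrow M(\ep) > 0$ almost surely; hence $P(E_{\bar\ep}) \to 1$.

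The core computation is $\Psi_{\bar\ep}(B|_{[\bar\ep, T]})|_{[\ep, T]} \ll B|_{[\ep, T]}$. Set $\tilde B_i(u) = B_i(u + \bar\ep) - B_i(\bar\ep)$, which is a standard Brownian motion in $\cC^2_0$ independent of $(B_1(\bar\ep), B_2(\bar\ep))$. A short manipulation using $M_{\bar\ep}(y) = (B_2(\bar\ep) - B_1(\bar\ep)) + \sup_{0 \le u \le y - \bar\ep}(\tilde B_2(u) - \tilde B_1(u))$ produces the identity
$$
\Psi_{\bar\ep}(B|_{[\bar\ep, T]})(y) = (B_2(\bar\ep) + W\tilde B_1(y - \bar\ep),\; B_1(\bar\ep) + W\tilde B_2(y - \bar\ep)),
$$
with the coordinates of $(B_1(\bar\ep), B_2(\bar\ep))$ swapped in the shift. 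Theorem \ref{T:Pitman-Brownian} gives $W\tilde B|_{[\ep - \bar\ep, T - \bar\ep]} \ll \tilde B|_{[\ep - \bar\ep, T - \bar\ep]}$, and since $(B_2(\bar\ep), B_1(\bar\ep))$ is independent of $\tilde B$ this absolute continuity persists after adding the shift (by Fubini). A direct check of Gaussian means, variances, and independence then shows that the shifted $\tilde B$ process has the same law as $B|_{[\ep, T]}$, yielding the core claim.

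To conclude, fix a Borel set $A \subset \cC([\ep, T])^2$ with $P(B|_{[\ep, T]} \in A) = 0$. The core step gives $P(\Psi_{\bar\ep}(B|_{[\bar\ep, T]}) \in A) = 0$; the hypothesis $X|_{[\bar\ep, T]} \ll B|_{[\bar\ep, T]}$ transfers this to $P(\Psi_{\bar\ep}(X|_{[\bar\ep, T]}) \in A) = 0$. On $E_{\bar\ep}$ this coincides with $\{WX|_{[\ep, T]} \in A\}$, so $P(WX|_{[\ep, T]} \in A, E_{\bar\ep}) = 0$; letting $\bar\ep \downarrow 0$ and using $P(E_{\bar\ep}) \to 1$ finishes the argument. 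The main obstacle is engineering the reduction to the good event $E_{\bar\ep}$ and spotting the clean Pitman-type identity above: the role of hypothesis \eqref{E:max-xep} is precisely to force $P(E_{\bar\ep}) \to 1$, which is what allows the transform on $[\ep, T]$ to be expressed without any reference to $X|_{[0, \bar\ep]}$.
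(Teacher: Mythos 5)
The paper gives no proof of this lemma---it is imported verbatim from \cite{ganguly2021local} (their Lemma 4.5)---so there is no in-paper argument to compare against; judged on its own, your proof is correct and follows essentially the same localization strategy as the cited source. The key points all check out: on $E_{\bar\ep}$ the running maximum of $X_2-X_1$ over $[0,y]$ agrees with that over $[\bar\ep,y]$ for $y\ge\ep$, so $WX|_{[\ep,T]}$ becomes a measurable functional of $X|_{[\bar\ep,T]}$ alone; the identity $\Psi_{\bar\ep}(B|_{[\bar\ep,T]})(y)=(B_2(\bar\ep)+W\tilde B_1(y-\bar\ep),\,B_1(\bar\ep)+W\tilde B_2(y-\bar\ep))$ is exact; and since $B_1(\bar\ep),B_2(\bar\ep)$ are i.i.d.\ and independent of $\tilde B$, the swap leaves the law of the comparison process equal to that of $B|_{[\ep,T]}$, so the absolute continuity from Theorem \ref{T:Pitman-Brownian} transfers as you claim and hypothesis \eqref{E:max-xep} forces $\p(E_{\bar\ep})\to 1$.
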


Note that in \cite{ganguly2021local}, Ganguly and Hegde prove a version of Proposition \ref{P:brown-abs} for particular permutations $\sig$. Just as in our case, their key input is Lemma \ref{L:abs-criterion}. 

\begin{proof}
We prove the proposition by induction on $n$. The $n=2$ base case follows from Theorem \ref{T:Pitman-Brownian}, as discussed above. Now suppose the lemma holds at $n-1$, and consider $\tau \in S_n$. Let $m = \tau(n)$. Using the notation $\tau_{i, j}$ introduced prior to Proposition \ref{P:top-lines}, we can write $\tau = \tau_{n, m} \rho$ for some $\rho \in S_n$ with $\rho(n) = n$. By the inductive hypothesis and the fact that $(W_\rho B)_n = B_n$, the proposition holds if $m = n$. We now induct backwards on $m \in \II{1, n}$. Let $m \le n-1$ and suppose the proposition holds for $m + 1$. By Lemma \ref{L:abs-criterion}, the proposition also holds if the set of $x$ where
\begin{equation}
\label{E:Wtau-nm}
W_{\tau_{n, m+1} \rho} B_{m+1}(x) - W_{\tau_{n, m+1} \rho} B_m(x) > 0
\end{equation}
has a limit point at $0$ almost surely. Now, $W_{\tau_{n, m+1} \rho} B_m = W_{\rho} B_m$, and 
$$
W_{\rho} B_m(x) \le W_{\rho} B[(0, n-1) \to (x, 1)] = B[(0, n-1) \to (x, 1)],
$$
where the equality follows from Proposition \ref{P:Wsig-extension}. Moreover, repeated applications of Proposition \ref{P:basic-prop-W}(i) imply that $W_{\tau_{n, m+1} \rho} B_{m+1}(x) \ge W_{\rho} B_{n}(x) = B_n(x)$. Therefore to show \eqref{E:Wtau-nm}, we just need to show that the set of $x$ where 
\begin{equation}
\label{E:Xnn}
B_n(x) - B[(0, n-1) \to (x, 1)] > 0
\end{equation}
has a limit point at $0$ almost surely. Let $I_m$ denote the indicator of the event in \eqref{E:Xnn} for $x = 1/2^m$. By Blumenthal's $0-1$ law, the process $\{I_m, m \in \N\}$ is stationary and ergodic. Therefore by the ergodic theorem, \eqref{E:Xnn} occurs a.s.\ for infinitely many $x = 1/2^m, m \in \N$ as long as $\E I_m > 0$. The fact that $\E I_m > 0$ follows since $B_n(x), B[(0, n-1) \to (x, 1)]$ are independent and $B_n(x)$ has an unbounded upper tail.
\end{proof}

In order to prove Theorem \ref{T:KPZ-fixed-point}, we will also need to understand how two-line Brownian last passage percolation evolves from a vertical initial condition.

\begin{prop}
	\label{P:pre-kpzfp}
	Let $B \in \cC^2_0$ and suppose $B \ll B'$, where $B' \in \cC^2_0$ is a standard Brownian motion. Let $a_1 > a_2$, and define $M_i:[0, \infty) \to \R$ by $M_i(y) = a_i + B[(0, i) \to (y, 1)]$. Set $H = \max(M_1, M_2)$.
	Then a.s. there exists $\tau \in (0, \infty)$ such that
	\begin{equation}
	\label{E:hAA}
	H(x) = \mathbf{1}(x < \tau) M_1(x) + \mathbf{1}(x \ge \tau) M_2(x).
	\end{equation}
	Next, let $X$ be any random variable on $\R$ with positive Lebesgue density everywhere, let $B^*$ be a two-sided standard Brownian motion, let $R$ be a two-sided Bessel-$3$ process, and suppose that all three objects are independent. Then
	$$
	(\tau, \sqrt{2} (H - H(\tau))) \ll (X, [B^* + R](- X + \cdot)|_{[0, \infty)}).
	$$ 
\end{prop}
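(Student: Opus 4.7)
The plan is to reduce to the case of a standard two-dimensional Brownian motion (using $B \ll B'$), prove part~(1) by a monotonicity argument, and then establish part~(2) by analyzing the conditional law of $\sqrt{2}(H - H(\tau))$ given $\tau$ separately on $[\tau, \infty)$ (via Pitman's theorem and the strong Markov property) and on $[0, \tau]$ (via Williams' path decomposition together with a technical smoothing step).

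For part~(1), take $B$ standard and write $V = B_2 - B_1$, a variance-$2$ Brownian motion. Then $M_2(y) - M_1(y) = (a_2 - a_1) + \bar V(y)$ is nondecreasing in $y$, starts at $a_2 - a_1 < 0$ and tends to $+\infty$ a.s., so $\tau := \inf\{y : \bar V(y) = a_1 - a_2\}$ is the unique first crossing time and yields the piecewise formula for $H$. For part~(2), change to the independent standard Brownian motions $\beta = (B_1 + B_2)/\sqrt{2}$ and $\omega = (B_2 - B_1)/\sqrt{2}$, so that $\tau = \inf\{y : \omega(y) = c\}$ with $c = (a_1 - a_2)/\sqrt{2}$, and $B_1 = (\beta - \omega)/\sqrt{2}$. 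The marginal law of $\tau$ is the first-passage density of a standard Brownian motion at $c$, which is strictly positive on $(0, \infty)$, so $\tau \ll X$ as marginals. A direct computation shows that for $u \ge 0$, $\sqrt{2}(H(\tau + u) - H(\tau)) = \tilde\beta(u) + (2\bar{\tilde\omega} - \tilde\omega)(u)$, where $\tilde\beta(u) = \beta(\tau + u) - \beta(\tau)$ and $\tilde\omega(u) = \omega(\tau + u) - c$ are independent standard Brownian motions independent of $\cF_\tau$ by the strong Markov property; by Pitman's theorem (Theorem~\ref{T:Pitman-Brownian}), $2\bar{\tilde\omega} - \tilde\omega$ is a standard Bessel-$3$ independent of $\tilde\beta$, so the right-hand piece exactly matches the positive side of $B^* + R$ in distribution. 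For $y \le \tau$, $\sqrt{2}(H(y) - H(\tau)) = \beta(y) - \beta(\tau) + c - \omega(y)$; in reversed time $u = \tau - y$ this equals $\hat\omega(u) - \hat\beta(u)$, with $\hat\beta(u) = \beta(\tau) - \beta(\tau - u)$ a standard Brownian motion on $[0, \tau]$ and $\hat\omega(u) = c - \omega(\tau - u)$ a standard Bessel-$3$ bridge from $0$ to $c$ of length $\tau$ (by Williams' decomposition of Brownian motion at a first-passage time), independent conditionally on $\tau$.

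The main obstacle is then the following technical absolute-continuity claim on the negative side: the law on $C[0, T]$ of (standard Bessel-$3$ bridge $0 \to c$ of length $T$) minus (independent standard Brownian motion) is absolutely continuous with respect to the law of (unconditioned standard Bessel-$3$) minus (independent standard Brownian motion). This is delicate because the Bessel-$3$ bridge is singular to the unconditioned Bessel-$3$ on $C[0, T]$ (the bridge has a deterministic endpoint at $c$), so the absolute continuity must come entirely from the smoothing effect of the independent Brownian motion. I would handle this by disintegrating both laws over the terminal value $Z(T)$: under the bridge-minus-BM law, $Z(T) \sim N(c, T)$, while under the unconditioned-minus-BM law, $Z(T)$ has density equal to the convolution of the Bessel-$3$ density at $T$ with $N(0, T)$; both are everywhere positive on $\R$. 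Conditional on $Z(T) = z$, the bridge version becomes (Bessel-$3$ bridge $0 \to c$) minus (independent Brownian bridge $0 \to c - z$), while the unconditioned version becomes a continuous mixture over $r > 0$ (with density $\propto p^{\text{BES}}(T, 0, r)\, p^B(T, 0, r - z)$, which is positive at $r = c$) of (Bessel-$3$ bridge $0 \to r$) minus (independent Brownian bridge $0 \to r - z$). Mutual absolute continuity across $r$ of this family of bridge-minus-bridge laws, which can be established via $h$-transform computations for Bessel-$3$ bridges and Girsanov drift changes for Brownian bridges on $[0, T - \epsilon]$ followed by a martingale limit as $\epsilon \to 0$ that is finite thanks to the smoothing by the Brownian part, then yields that the $r = c$ slice is absolutely continuous with respect to the mixture, closing the argument.
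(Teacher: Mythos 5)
Your overall skeleton matches the paper's proof step for step: reduce to the standard Brownian case using $B \ll B'$, rotate to the sum/difference coordinates, identify $\tau$ as a first-passage time with positive density, handle the right side of $\tau$ with the strong Markov property plus Pitman's theorem, and handle the left side with the Williams-type path decomposition giving a Bessel-$3$ bridge plus an independent Brownian motion. Where you diverge is in the one genuinely technical step you correctly isolate: showing that Bessel-$3$ bridge $+$ independent Brownian motion is absolutely continuous with respect to unconditioned Bessel-$3$ $+$ independent Brownian motion (the paper's Lemma~\ref{L:mutually-abs}). The paper proves this by splitting at the midpoint $t/2$ and using the Markov property: conditionally on the \emph{first half} of the sum process, the remaining-distance-to-travel for the Bessel bridge has a Lebesgue density (here is where the independent Brownian motion provides the smoothing), and the law of the second half of the sum depends on the past only through that residual distribution, so mutual absolute continuity of the residuals finishes the argument without ever touching Radon--Nikodym derivatives explicitly. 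You instead condition on the \emph{terminal value} $Z(T)$, which reduces both sides to mixtures of bridge-minus-bridge laws and forces you to prove mutual absolute continuity of that family via $h$-transforms, Girsanov on $[0, T-\epsilon]$, and a martingale limit as $\epsilon \to 0$. That route is plausible, but it is arguably working against itself: conditioning on $Z(T)$ converts the independent Brownian motion into a Brownian bridge, reducing exactly the smoothing you need, and the claim that the resulting martingale limit is finite is asserted rather than established --- this is the heart of the matter and would require real work to pin down. The paper's midpoint-conditioning device sidesteps all of this and is considerably cleaner; if you retain your structure, it would be worth replacing the endpoint-disintegration step with the analogue of Lemma~\ref{L:mutually-abs}.
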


When $B = B'$, a more precise description of the law of $(\tau, \sqrt{2} (H - H(\tau)))$ falls out of the proof. To prove Proposition \ref{P:pre-kpzfp} we need a lemma.

\begin{lemma}
	\label{L:mutually-abs}
Let $B:[0, t] \to \R$ be a standard Brownian motion, let $R$ be a Bessel-$3$ process on $[0, t]$ started at $0$ and conditioned to end at a location $a$, and let $R'$ be an unconditioned Bessel-$3$ process on $[0, t]$. Suppose all objects are independent. Then $(B + R)|_{[0, t]}$ and $(B + R')|_{[0, t]}$ are mutually absolutely continuous. 
\end{lemma}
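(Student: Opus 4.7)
The plan is to condition on the common endpoint value of the two processes. First, the endpoint of $B+R$ is $B(t)+a\sim N(a,t)$ and the endpoint of $B+R'$ is $B(t)+R'(t)$, whose density is the convolution of the Gaussian density $\phi_t$ with the Bessel-$3$ marginal density $q_t$ (both strictly positive on $\R$). So the endpoint marginals are mutually absolutely continuous, and by Fubini it suffices to establish mutual absolute continuity of the conditional laws $\mu_y$ and $\nu_y$ given the endpoint equals $y$, for Lebesgue-a.e.\ $y$.

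Next, I would identify the conditional laws explicitly. Because $R(t)=a$ is deterministic, conditioning $B+R$ on the endpoint $y$ forces $B(t)=y-a$, so $\mu_y$ is the law of (BM bridge $0\to y-a$) $+$ (independent Bessel-$3$ bridge $0\to a$). In contrast, conditioning $B+R'$ on the sum $y=B(t)+R'(t)$ leaves $r:=R'(t)\in(0,\infty)$ as a free parameter with positive conditional density $\rho_y(r)\propto \phi_t(y-r)q_t(r)$, and given this split the conditional process is the analogous bridge sum $\sigma_{y,r}$ consisting of (BM bridge $0\to y-r$) $+$ (independent Bessel-$3$ bridge $0\to r$). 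Thus $\mu_y=\sigma_{y,a}$ while $\nu_y=\int_0^\infty \sigma_{y,r}\,\rho_y(r)\,dr$. Since $\rho_y(a)>0$, a further Fubini step reduces mutual absolute continuity of $\mu_y$ and $\nu_y$ to the following subclaim: $\sigma_{y,r}\sim\sigma_{y,r'}$ for every $r,r'>0$.

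The subclaim is the main obstacle. For it I would use the representation $R^r(s)=\|W_0(s)+(sr/t,0,0)\|$ of a Bessel-$3$ bridge $0\to r$ as the Euclidean norm of a $3$-dimensional Brownian bridge with endpoint $(r,0,0)$, where $W_0$ is a $3$-dimensional BM bridge from $0$ to $0$. Coupling different $r$'s through the common $W_0$ together with an independent $1$-dimensional BM bridge $B_0$, we realize $\sigma_{y,r}$ as the pushforward of a fixed Gaussian measure under a map whose $r$-dependence is a deterministic linear drift $s(y-r)/t$ on $B_0$ together with a deterministic endpoint shift $(r,0,0)$ inside the norm. For different $r,r'$, the underlying $3$-dimensional bridges are mutually absolutely continuous on every subinterval $[0,t-\ep]$ (with explicit Doob h-transform densities), and the Cameron-Martin shift on $B_0$ is absolutely continuous there as well. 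The mutual singularity at $s=t$ is neutralized once we add the independent bridge $B_0$, since both processes then share the common endpoint $y$; I would make this precise by first showing $\sigma_{y,r}\sim\sigma_{y,r'}$ on the $\sigma$-algebra generated by the path on $[0,t-\ep]$ for every $\ep>0$, and then verifying that the associated Radon-Nikodym derivatives form a uniformly integrable martingale as $\ep\to 0$. The uniform integrability is where I expect the bulk of the technical effort, exploiting the Gaussian regularization afforded by $B_0$ to control the tail behavior of the h-transform densities near the pinned endpoint.
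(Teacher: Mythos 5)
Your approach is genuinely different from the paper's, and the difference matters. The paper's proof splits the time interval at the midpoint $t/2$: on $[0,t/2]$, the Bessel process and Bessel bridge are mutually absolutely continuous outright (the conditioning at time $t$ is seen through a positive, bounded Radon--Nikodym derivative when restricted to a proper subinterval), and on $[t/2,t]$, one conditions on the path so far and observes that the conditional law of the ``remaining endpoint drift'' $a-R(t/2)$ has a positive Lebesgue density, which immediately gives mutual absolute continuity of the second halves. This midpoint split is the key trick: it keeps the conditioning \emph{soft} and never forces you near the pinned endpoint, where Bessel bridges with different endpoints become mutually singular.

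Your proposal instead disintegrates over the common terminal value $y=(B+R)(t)$, which leads to a bona fide reduction but then runs straight into the hard part: the subclaim $\sigma_{y,r}\sim\sigma_{y,r'}$, i.e.\ that (BM bridge $0\to y-r$) $+$ (Bessel-$3$ bridge $0\to r$) and (BM bridge $0\to y-r'$) $+$ (Bessel-$3$ bridge $0\to r'$) have mutually absolutely continuous laws on $[0,t]$. This is essentially the same type of statement as the lemma itself (sum of a Brownian-type process and a Bessel-type process, pinned), except now you have conditioned \emph{everything} and must handle the degeneracy at $s=t$ directly. You identify this correctly as ``the main obstacle,'' but the Cameron--Martin-plus-uniform-integrability argument you sketch is not actually carried out, and the uniform integrability of the Radon--Nikodym martingale as $\ep\to 0$ is precisely the delicate issue (the $h$-transform densities for the $3$-dimensional bridges blow up near $t$). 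So there is a genuine gap: the subclaim, which carries the whole weight, is left unproven. If you want to rescue this route, the cleanest fix is to import the paper's midpoint idea into your subclaim (split $\sigma_{y,r}$ at $t/2$, handle the first half by mutual a.c.\ of Bessel bridges on a subinterval, and handle the second half via positive density of the unseen intermediate values), at which point you have essentially rederived the paper's argument with an unnecessary extra layer of conditioning on $y$.
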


\begin{proof}
	It is enough to show this when $R'$ is an independent Bessel-$3$ process on $[0, t]$ conditioned to end at a specific location $a' \ne a$. First, the processes $(B + R')|_{[0, t/2]}$ and $(B + R)_{[0, t/2]}$ are mutually absolutely continuous since $R, R'$ are mutually absolutely continuous when restricted to $[0, t/2]$.

	Next, let $\cF$ be the $\sig$-algebra generated by $(B + R')|_{[0, t/2]}, (B + R)_{[0, t/2]}$. 
	The conditional distribution $\mu_a$ of $a - R(t/2)$ given $\cF$ a.s. has a positive Lebesgue density on all of $\R$. The same is true of the conditional distribution $\mu_{a'}$ of $a' - R(t/2)$ given $B' + R'$ on $[0, t/2]$, so $\mu_{a}, \mu_{a'}$ are mutually absolutely continuous a.s.
	
	The conditional distributions of $(B + R)|_{[t/2, t]} - (B + R)(t/2), (B + R')|_{[t/2, t]} - (B + R')(t/2)$ only depend on $\cF, a, a'$ through $\mu_a, \mu_{a'}$ respectively. Therefore $(B + R')|_{[0, t]}$ and $(B + R)_{[0, t]}$ are mutually absolutely continuous since $\mu_{a}, \mu_{a'}$ are mutually absolutely continuous a.s.
\end{proof}

\begin{proof}[Proof of Proposition \ref{P:pre-kpzfp}]
It is enough to prove this when $(B_1, B_2) = (B_1', B_2')$. Let $a = a_1 - a_2, C_1 = B_1 - B_2, C_2 = B_1 + B_2$. We have that
$$
\tau = \inf \{t \in (0, \infty) : C_1(t) = - a\}.
$$ 
Hence $\tau \ll X$ since $C_1$ is a Brownian motion, and $\tau$ is a stopping time with respect to $C_1$'s filtration. In particular, given $\tau$, the three processes
\begin{equation}
\label{E:tau-x}
\frac{C_2(\tau - x) - C_2(\tau)}{\sqrt{2}}, \quad x \in [0, \tau]; \quad B_1(\tau+x) - B_1(\tau), \quad B_2(\tau+x) - B_2(\tau), \quad x \in [0, \infty) 
\end{equation}
are all independent standard Brownian motions. Moreover, given $\tau$ the process
$$
\frac{C_1(\tau - x) - C_1(\tau)}{\sqrt{2}}, x \in [0, \tau]
$$
is a standard Brownian motion conditioned to stay positive and go from $(0, 0)$ to $(\tau, a)$, independent of all the processes in \eqref{E:tau-x}. In other words, it is a Bessel-$3$ process on $[0, \tau]$ conditioned to end at $a$. Therefore by Lemma \ref{L:mutually-abs},
\begin{equation}
\label{E:tau-abs}
(\tau, 2[M_1(\tau - x) - M_1(\tau)]) = (\tau, [C_1 + C_2](\tau - x) - [C_1 + C_2](\tau) ) \ll (\tau, \sqrt{2}[B^* + R](- x)),
\end{equation}
where the absolute continuity is as functions on $[0, \tau]$. Now, since $\tau$ is a record time for the process $C_1$, for $y \ge \tau$, we have that
$$
M_2(y) - M_2(\tau) = B_2[(\tau, 2) \to (y, 1)]. 
$$
The observation from \eqref{E:tau-x} and Theorem \ref{T:Pitman-Brownian} imply that given $\tau$ and $C_1, C_2$ on $[0, \tau]$, we  have
$$
\sqrt{2} B_2[(\tau, 2) \to (\tau + \cdot, 1)] \eqd B^* + R
$$
as functions on $[0, \infty)$. Combining this with \eqref{E:tau-abs} completes the proof. 
\end{proof}

\section{Isometries in the limit}
\subsection{Preliminaries on universal KPZ limits}
\label{S:universal}

In this section, we gather the necessary background about the Airy line ensemble, the Airy sheet, the directed landscape, and extended versions of these objects.

First, the \textbf{parabolic Airy line ensemble} $\cW = (\cW_i, i \in \N)$ is a random sequence of continuous functions $\cW_i:\R \to \R$. Its lines satisfy $\cW_1 > \cW_2 > \dots$ almost surely. It is defined through a determinantal formula, see \cite{CH} for details. The main fact about the Airy line ensemble that we need is the following. For this next proposition, we let $\cF_{a, b, k}$ denote the $\sig$-algebra generated by
$$
\cW_i(t), \qquad (i, t) \notin \II{1, k} \X (a, b).
$$

\begin{prop}
\label{P:brownian-ac}
Fix $k \in \N, a < b$ and let $B$ be a $k$-dimensional Brownian motion of variance $2$ started from $B(a) = 0$. Then conditionally on $\cF_{a, b, k}$, for any $c \in (a, b)$ we have
\begin{equation}
\label{E:abs-cont-airy}
(\cW_1|_{[a, c]} - \cW_1(a), \dots, \cW_k|_{[a, c]} - \cW_k(a)) \ll (B_1, \dots, B_k)|_{[a, c]}.
\end{equation} 
\end{prop}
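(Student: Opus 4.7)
The plan is to reduce to the Brownian Gibbs property of the (non-parabolic) Airy line ensemble established by Corwin--Hammond \cite{CH}. Set $\cA_i(x) = \cW_i(x) + x^2$, so that $\cA = (\cA_i, i \in \N)$ is the Airy line ensemble. Because the parabolic shift is deterministic, $\cF_{a,b,k}$ is the same whether described in terms of $\cW$ or $\cA$, and in particular the values $\cA_i(a), \cA_i(b), i \in \II{1, k}$ and the whole curve $\cA_{k+1}|_{[a,b]}$ are $\cF_{a,b,k}$-measurable.

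First, I would apply the Brownian Gibbs property: conditional on $\cF_{a,b,k}$, the restriction of $(\cA_1, \dots, \cA_k)$ to $[a,b]$ is distributed as $k$ independent Brownian bridges of variance $2$ from $(\cA_i(a), a)$ to $(\cA_i(b), b)$, conditioned on the event $E$ that the resulting curves remain strictly ordered and lie strictly above $\cA_{k+1}|_{[a,b]}$. Next I would check that $\p(E \mid \cF_{a,b,k}) > 0$ almost surely: since $\cA_1(a) > \dots > \cA_k(a) > \cA_{k+1}(a)$, similarly at $b$, and $\cA_{k+1}|_{[a,b]}$ is continuous, by a standard Cameron--Martin shift/support argument the bridges can be made to follow arbitrarily close tubes around the deterministic curves $\cA_i$ with positive probability, which ensures $E$ occurs with positive conditional probability.

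Now I would fix $c \in (a,b)$ and use two standard facts. (i) The law of a Brownian bridge of variance $2$ from $(\al, a)$ to $(\be, b)$, restricted to $[a,c]$, is mutually absolutely continuous with respect to the law of a variance $2$ Brownian motion started at $\al$ on $[a,c]$; the Radon--Nikodym derivative is the bounded, positive ratio of Gaussian transition densities at time $b$. Applying this independently in each coordinate gives absolute continuity of the unconditioned bridge system on $[a,c]$ with respect to $k$ independent variance $2$ Brownian motions started at $\cA_i(a)$. (ii) Conditioning on a positive-probability event $E$ preserves absolute continuity; the Radon--Nikodym derivative just picks up an extra factor of $\mathbf{1}_E / \p(E \mid \cF_{a,b,k})$, which is finite almost surely by the previous step.

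Combining (i) and (ii), the conditional law of $(\cA_1 - \cA_1(a), \dots, \cA_k - \cA_k(a))|_{[a,c]}$ given $\cF_{a,b,k}$ is absolutely continuous with respect to the law of $(B_1, \dots, B_k)|_{[a,c]}$. Finally, since $\cW_i(x) - \cW_i(a) = \cA_i(x) - \cA_i(a) - (x^2 - a^2)$ and the parabolic shift is $\cF_{a,b,k}$-measurable (in fact deterministic), this immediately transfers to the claimed absolute continuity for $\cW$. The only non-routine step is verifying that the non-intersection event $E$ has positive conditional probability almost surely, but this is a well-established consequence of the strict ordering and continuity of the Airy line ensemble.
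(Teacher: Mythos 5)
Your proposal is essentially the same in spirit as the paper's: both rest on the Brownian Gibbs property, on the almost-sure positivity of the non-intersection probability, and on a comparison between Brownian bridges and Brownian motion on a proper subinterval. The decomposition differs slightly — the paper first conditions on the endpoint values $(\cW_i(c) - \cW_i(a))_{i \le k}$, observes these have a Lebesgue density, and then compares the remaining conditional laws against Brownian bridge laws; you instead restrict the conditioned-bridge law from $[a,b]$ to $[a,c]$ and absorb the conditioning via $\mathbf{1}_E/\p(E \mid \cF_{a,b,k})$ — but these are cosmetic variants of the same idea.

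One substantive misstatement to fix: you have the roles of $\cA$ and $\cW$ reversed. It is the \emph{parabolic} Airy line ensemble $\cW$ (not the stationary $\cA = \cW + x^2$) that has the Brownian Gibbs property in \cite{CH}; the stationary version has a parabolic drift which Brownian bridges do not, so the Gibbs resampling statement does not hold verbatim for $\cA$. In the end this does not break your argument, because on a compact interval a deterministic parabolic drift is a Cameron--Martin shift and hence preserves absolute continuity with respect to Brownian motion, but your final ``immediately transfers'' sentence hides exactly that step. It would be cleaner (and shorter) to simply apply the Brownian Gibbs property to $\cW$ directly, as the paper does, and skip the detour through $\cA$ entirely.
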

Other absolute continuity statements for $\cW$ have appeared earlier (e.g. \cite[Proposition 4.1]{CH}), but these are not phrased to give the conditional absolute continuity we need, so we include a proof. 

\begin{proof}
	The statement is almost immediate from the \textbf{Brownian Gibbs property} for $\cW$, shown in \cite[Theorem 3.1]{CH}. The Brownian Gibbs property says that for any $a < b$ and $k \in \N$, conditional on $\cF_{a, b, k}$, the Airy lines $\cW_1 > \dots > \cW_k$ restricted to the interval $[a, b]$ are given by $k$ independent Brownian bridges of variance $2$ between $\cW_i(a)$ and $\cW_i(b)$ for $i \in \II{1, k}$, conditioned so that $\cW_1(x) > \dots > \cW_k(x) > \cW_{k+1}(x)$ for all $x \in (a, b)$. In particular, for any $c \in (a, b)$ we have that
	\begin{itemize}[nosep]
		\item The conditional law of $(\cW_1(c) - \cW_1(a), \dots, \cW_k(c) - \cW_k(a))$ given $\cF_{a, b, k}$ is absolutely continuous with respect to Lebesgue measure on $\R^k$.
		\item Conditional on $(\cW_1(c) - \cW_1(a), \dots, \cW_k(c) - \cW_k(a))$ and $\cF_{a, b, k}$, the law of 
		$$
		(\cW_1 - \cW_1(a), \dots, \cW_k - \cW_k(a)) |_{[a, c]}
		$$
		is absolutely continuous with respect to the law of $k$ independent Brownian bridges from $0$ at time $a$ to  $\cW_i(c) - \cW_i(a)$ at time $c$ for $i \in \II{1, k}$.
	\end{itemize}
Now, with $B$ as in the statement of the proposition, the law of $B(c)$ is mutually absolutely continuous with respect to Lebesgue measure, and conditional on $B(c)$, the law of $B|_{[a, c]}$ is the law of $k$ independent Brownian bridges from $0$ at time $a$ to $B(c)$ at time $c$. Therefore conditional on $\cF_{a, b, k}$ we have \eqref{E:abs-cont-airy}.
\end{proof}

\begin{definition}
	\label{D:Airy-sheet}
Let $\cW$ be a parabolic Airy line ensemble, and let $\tilde \cW(x) = \cW(-x)$. For $(x, y, z) \in \Q^+ \X \Q^2$ let
\begin{equation}
\label{E:S'-form}
\begin{split}
\cS'(x, y, z) &= \lim_{k \to \infty} \cW[(-\sqrt{k/(2x)}, k) \to (y, 1)]- \cW[(-\sqrt{k/(2x)}, k) \to (z, 1)],\\
\cS'(-x, y, z) &= \lim_{k \to \infty} \tilde \cW[(-\sqrt{k/(2x)}, k) \to (-y, 1)]- \tilde \cW[(-\sqrt{k/(2x)}, k) \to (-z, 1)].
\end{split}
\end{equation}
For $(x, y) \in \Q \setminus \{0\} \X \Q$, define
\begin{equation}
\label{E:Sxy-lim}
\cS(x, y) = \E\cA_1(0)+\lim_{n \to \infty} \frac{1}{2n} \sum_{z=-n}^{n}\cS'(x, y, z) - (z - x)^2,
\end{equation}
The {\bf Airy sheet} defined from $\cW$ is the unique continuous extension of $\cS$ to $\mathbb R^2$. 
\end{definition}

It is not at all clear that the limits in Definition \ref{D:Airy-sheet} exist, or that the resulting function has a continuous extension to $\R^2$. However, almost surely, these properties do hold and so the construction above is well-defined. This is shown in \cite[Section 1.11]{dauvergne2021scaling}. While Definition \ref{D:Airy-sheet} is somewhat involved, one should think of it as capturing the idea that 
\begin{equation}
\label{E:sSWxinfty}
``\cS(x,y) = \cW[(x, \infty) \to (y, 1)]",
\end{equation}
which can be thought of as the single-point limit of Proposition \ref{P:Wsig-extension}.
The more complex Definition \ref{D:Airy-sheet} is required to rigorously make sense of the right hand side of \eqref{E:sSWxinfty}, see \cite{DOV}, \cite{dauvergne2021scaling} for more background. 

The extended landscape is built from the extended Airy sheet. To describe this precisely, we need a notion of multi-point last passage percolation across $\cW$.
For $x \in [0, \infty)$ and $y\in \R$, a nonincreasing cadlag function $\pi:(-\infty, y] \to \N$ is a \textbf{parabolic path} from $x$ to $z$ if
$$
\lim_{z \to -\infty} \frac{\pi(y)}{2z^2} =x.
$$
For a parabolic Airy line ensemble $\cW$ with corresponding Airy sheet $\cS$ as in Definition \ref{D:Airy-sheet}, define the \textbf{path length}
\begin{equation}
\label{E:piSW}
\|\pi\|_\cW = \cS(x,y) + \lim_{z \to -\infty} \lf( \|\pi|_{[z,y]}\|_\cW - \cW[(z, \pi(z)) \to (y, 1)]\rg). 
\end{equation}
The limit above always exists and is nonpositive, see \cite[Lemma 5.1]{dauvergne2021disjoint}. The idea of definition \eqref{E:piSW} is that the best possible path length for $\pi$ should be the last passage value in \eqref{E:sSWxinfty}. This accounts for the first term in \eqref{E:piSW}. For fixed $z$, the second term in \eqref{E:piSW} measures the discrepancy between the length of $\pi$ and the optimal length on the compact interval $[z, y]$; taking $z \to -\infty$ then gives the discrepancy between $\|\pi\|_\cW$ and $\cS(x,y)$.

For $(\bx, \by) \in \R^k_\le \X\R^k_\le$ with $x_1 \ge 0$, we can then define the multi-point last passage value
\begin{equation}
\label{E:Bxy}
\cW[\bx \to \by] = \sup_{\pi} \sum_{i=1}^k \|\pi_i\|_\cW,
\end{equation}
where the supremum is over \textbf{disjoint $k$-tuples} of parabolic paths  $\pi = (\pi_1, \dots, \pi_k)$ from $x_i$ to $y_i$ satisfying $\pi_i(t) > \pi_j(t)$ for all $i < j, t < y_i$. A $k$-tuple achieving \eqref{E:Bxy} is a \textbf{(disjoint) optimizer} from $\bx$ to $\by$, or a \textbf{geodesic} from $x$ to $y$ when $k = 1$.
 In \cite[Theorem 1.3]{dauvergne2021disjoint}, equation \eqref{E:Bxy} was used along with a translation invariance property to characterize the extended Airy sheet. We can give a stronger characterization here, thanks to the recently established landscape symmetry
\begin{equation}
\label{E:sLxsyt}
\cL(x, s; y, t) \eqd \cL(-x, s; -y, t)
\end{equation}
shown in \cite[Proposition 1.23]{dauvergne2021scaling}. The equality \eqref{E:sLxsyt} is as random continuous functions on $\Rd$.

\begin{theorem}
	\label{T:extended-sheet-char}
Let $\cL$ be a directed landscape. Similarly to \eqref{E:WL-definition}, define 
\begin{equation}
\label{E:sW-xk}
\begin{split}
\sum_{i=1}^k W_x \cL_i(y) &= \cL(x^k, 0; (y-x)^k, 1)
\end{split}
\end{equation}
Then for all $x \in \R$, the process $W_x \cL$ is a parabolic Airy line ensemble and a.s. we have that
\begin{equation}
\label{E:isom-only-on-Airy}
\begin{split}
\sum_{i=1}^k W_x \cL_i(y) &= W_0 \cL[x^k \to (y-x)^k], \qquad \text{ for all } x \ge 0, \quad \mathand \\
\sum_{i=1}^k W_x \cL_i(y) &=  \tilde W_0 \cL[(-x)^k \to (x-y)^k], \qquad \text{ for all } x \le 0.
\end{split}
\end{equation}
Here similarly to Definition \ref{D:Airy-sheet}, $\tilde W_0 \cL(y) := W_0 \cL(-y)$. Moreover, define the \textbf{extended Airy sheet} $\cS(\bx, \by) = \cL(\bx, 0; \by, 1).$ This is a real-valued continuous function with domain $\fX := \bigcup_{k \ge 1} \R^k_\le \X \R^k_\le$. Almost surely, $\cS$ satisfies
\begin{equation}
\label{E:isom}
\begin{split}
\cS(\bx, \by) &= W_z \cL[\bx - z^k \to \by - z^k], \qquad x_1 \ge z, \\
\cS(\bx, \by) &= \tilde W_z \cL[z^k - \bx \to z^k - \by], \qquad x_k \le z
\end{split}
\end{equation}
for all $(\bx, \by) \in \fX, z \in \Q$ satisfying the above constraints. In particular, combining \eqref{E:isom-only-on-Airy} and \eqref{E:isom} implies that $\cS$ is a function of $W_0\cL$ and that $\cS|_{\R^2}$ and $W_0\cL$ satisfy the same relationship as $\cS, \cW$ in Definition \ref{D:Airy-sheet}.
\end{theorem}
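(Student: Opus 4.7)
The plan is to leverage Theorem 1.3 of \cite{dauvergne2021disjoint}, which already characterizes the extended Airy sheet in terms of last passage across $W_0 \cL$ in the half-plane regime $x_1 \ge 0$. The crucial new input will be the recently established reflection symmetry \eqref{E:sLxsyt}, which lets us handle the regime $x_k \le 0$ on equal footing. A translation argument then promotes the identity from base point $0$ to any rational base point $z$, yielding the full isometry \eqref{E:isom}.

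First I would check that $W_x \cL$ is a parabolic Airy line ensemble for every $x \in \R$. The $x = 0$ case is immediate from \eqref{E:WL-definition} and the construction of \cite{DOV}. For general $x$, spatial translation invariance of $\cL$ gives that $y \mapsto \cL(x^k, 0; (y-x)^k, 1)$ has the same joint law (in $k, y$) as $y \mapsto \cL(0^k, 0; (y-2x)^k, 1)$, so $W_x \cL$ is a spatial translate of $W_0 \cL$ in distribution. The top identity in \eqref{E:isom-only-on-Airy} then follows directly from \eqref{E:cLW} (with $\bx = x^k$, $\by = (y-x)^k$, valid since $x \ge 0$) combined with the defining identity \eqref{E:sW-xk} of $W_x \cL$.

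For the bottom identity in \eqref{E:isom-only-on-Airy}, I would set $\tilde \cL(x, s; y, t) := \cL(-x, s; -y, t)$. By \eqref{E:sLxsyt}, $\tilde \cL$ is itself a directed landscape, so Theorem 1.3 of \cite{dauvergne2021disjoint} applies to it and yields an almost sure identity $\tilde \cL(\bx, 0; \by, 1) = W_0 \tilde \cL[\bx \to \by]$ in the regime $x_1 \ge 0$. A direct unpacking of definitions shows $W_0 \tilde \cL = \tilde W_0 \cL$ in the notation of the theorem, and a change of variables $\bx \mapsto -\bx$, $\by \mapsto -\by$ together with the trivial identity $\cL(x^k, 0; (y-x)^k, 1) = \tilde \cL((-x)^k, 0; (x-y)^k, 1)$ produces the bottom identity for $x \le 0$.

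Finally, the continuity of $\cS(\bx, \by) = \cL(\bx, 0; \by, 1)$ on $\fX$ is inherited from the joint continuity of $\cL$ in its spatial arguments. For \eqref{E:isom}, I would apply spatial translation invariance of $\cL$ to shift the base point $0$ to an arbitrary rational $z$, reducing to the identities just proved. The main obstacle is the almost sure uniformity: one must patch together countably many null sets (one for each rational $z$ and for each choice of reflection) into a single full probability event on which \eqref{E:isom} holds simultaneously for all $(\bx, \by) \in \fX$ and all admissible rational $z$. This is a countability argument together with continuity of $\cL$ used to extend from a countable dense set of endpoints to all of $\fX$.
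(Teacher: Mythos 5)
Your proposal is correct and follows essentially the same route as the paper: establish the $z=0$, $x\ge 0$ case from the extended-Airy-sheet results of \cite{dauvergne2021disjoint}, use translation invariance \eqref{E:translation-L} to move the base point to any rational $z$, and use the reflection symmetry \eqref{E:sLxsyt} to obtain the tilde identities. The only material difference is citation precision: the paper notes that the base-case identity requires combining Equation (7), Theorem 1.3, \emph{and} Theorem 1.6 of \cite{dauvergne2021disjoint}, not Theorem 1.3 alone.
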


The equations in \eqref{E:isom} should hold not just for $z \in \Q$, but simultaneously for all $z$. We do not pursue this technical extension here. Note that to make sense of the points $-\bx, -\by \in \R_\le^k$ in \eqref{E:isom} we need to reverse the order of the coordinates in $\bx, \by$. From now on we ignore this minor point.
\begin{proof}
	This really just consists of gathering all the necessary results from \cite{dauvergne2021disjoint}.
The first equality in \eqref{E:isom} for $x = 0$ and $z = 0$ is one of the main results of \cite{dauvergne2021disjoint}. More precisely, it follows by combining Equation (7), Theorem 1.3, and Theorem 1.6 from that paper. This combination also immediately implies \eqref{E:isom-only-on-Airy} for $x \ge 0$. Now, we have the symmetry
\begin{equation}
\label{E:translation-L}
\cL(x, s; y, t) \eqd \cL(x + r, s; y + r, t),
\end{equation}
where the equality is as random functions in $\Rd$, see \cite[Lemma 10.2.2]{DOV}. This implies that the first equality in \eqref{E:isom} also holds almost surely for any $z \in \Q$. The symmetry \eqref{E:sLxsyt} then implies all the `tilde' equalities in both \eqref{E:isom-only-on-Airy} and \eqref{E:isom}. 
\end{proof}

To understand the extended Airy sheet, we study geodesics, path length and disjoint optimizers in $\cW$. We record the following basic properties of these objects, from \cite{dauvergne2021disjoint}. For this proposition, a $k$-tuple of parabolic paths $\pi$ from $\bx$ to $\by$ is \textbf{locally optimal} if for any $t < y_1$, the $k$-tuple $(\pi_1|_{[t, y_1]}, \dots \pi_k|_{[t, y_k]})$ is a disjoint optimizer from $(t, \pi(t))$ to $(\by, 1)$. 

\begin{prop}
	\label{P:geodesic-properties}
	Let $\cW$ be an Airy line ensemble. We have the following properties.
	\begin{enumerate}[nosep, label=(\roman*)]
		\item (Lemma 4.2(i), \cite{dauvergne2021disjoint}) For any fixed $(x, y) \in [0, \infty) \X \R$, there exists a unique geodesic in $\cW$ from $x$ to $y$ a.s. We call this geodesic $\pi\{x, y\}$.
		\item (Lemma 4.2(iv), \cite{dauvergne2021disjoint}) For any fixed $0 \le x < x'$ and $y \in \R$, we have
		$$
		\lim_{r \to \infty} \p(\pi\{x, y\}(z) < \pi\{x', y + r\}(z) \text{ for all } z < y) = 1.
		$$
		\item (Lemma 4.3, \cite{dauvergne2021disjoint}) The following statement holds almost surely.
		Let $\pi_1,\pi_2$ be any two parabolic paths across $\cW$ from any point $x$ to any points $z_1, z_2$ respectively, such that for some $z_0<z_1\wedge z_2$, we have $\pi_1(y)=\pi_2(y)$ for any $y\le z_0$.
		Then
		\[
		\|\pi_1\|_\cW - \|\pi_1|_{[z_0, z_1]}\|_\cW
		=
		\|\pi_2\|_\cW - \|\pi_2|_{[z_0, z_2]}\|_\cW.
		\]
		Here note that $\|\pi|_{[a, b]}\|_{\cW}$ is a usual path length in the environment $\cW$ defined with a sum of increments as in \eqref{E:pi-length}, not an infinite path length.
		\item (Proposition 4.5(i), \cite{dauvergne2021disjoint}) For parts (iv)-(vi), let $\bx = (x_1 < x_2 < \dots < x_k)$ with $x_1 > 0$, and let $\by \in \R^k_\le$. Suppose that $\pi$ is a disjoint optimizer from $\bx$ to $\by$. Then $\pi$ is locally optimal.
		\item (Proposition 4.5(ii), \cite{dauvergne2021disjoint}) A.s.\ there is a unique optimizer $\pi = (\pi_1, \dots, \pi_k)$ from $\bx$ to $\by$ in $\cW$. Moreover, letting $\pi\{x_i, 0\}$ be the a.s.\ unique geodesic in $\cW$ from $x_i$ to $0$, there exists a random $Y \in \R$ such that  $\pi\{x_i, 0\}(t) = \pi_i(t)$ for all $t \le Y, i \in \II{1, k}$.
		\item (Proposition 4.5(iii), \cite{dauvergne2021disjoint}) A.s., the only $k$-tuple $\pi$ from $\bx$ to $\by$ in $\cW$ which is locally optimal is the unique optimizer from $\bx$ to $\by$.
	\end{enumerate}
\end{prop}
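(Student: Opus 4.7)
The plan is to reduce each item to either a resampling/Brownian Gibbs argument (Proposition \ref{P:brownian-ac}) applied to the parabolic Airy line ensemble, or to a monotonicity/coalescence argument for rightmost optimizers (Proposition \ref{P:basic-optimizers}). I would first handle items (i)--(iii), which concern single paths, and then bootstrap to the $k$-tuple statements (iv)--(vi).

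For (i), I would prove uniqueness of the geodesic $\pi\{x, y\}$ by first showing that any two geodesics from $(x, y)$ must coalesce at $-\infty$: indeed, the parabolic condition $\lim_{z \to -\infty} \pi(z)/(2z^2) = x$ forces the two paths to visit the same lines at all sufficiently negative $z$, and by \eqref{E:piSW} their increments across any compact window $[z,y]$ must agree with the optimum $\cW[(z, \pi(z)) \to (y, 1)]$, which is attained by a unique path on any compact interval a.s.\ by Brownian Gibbs plus a standard argmax argument. If the two geodesics differed on some interval $[z_0, y]$, one could condition on the external sigma-algebra $\cF_{a, b, k}$ for an appropriate window containing this interval and use Proposition \ref{P:brownian-ac} to get absolute continuity with respect to Brownian bridges: since the maximizer in the resulting finite LPP across $k$ Brownian lines is a.s.\ unique, we would get a contradiction. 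Item (iii) is essentially a tautology from the definition \eqref{E:piSW}: if $\pi_1, \pi_2$ agree on $(-\infty, z_0]$, then for every $z < z_0$, the increments $\|\pi_i|_{[z,z_0]}\|_\cW - \cW[(z, \pi_i(z)) \to (\cdot, 1)]$ match between the two paths because they involve the same endpoints and environment pieces. Item (ii) follows from coalescence of geodesics: by (i), $\pi\{x, y\}$ and $\pi\{x', y + r\}$ are unique, and by monotonicity (the landscape analogue of Proposition \ref{P:basic-optimizers}(ii) at finite prelimit level, then passed to the limit), $\pi\{x, y\} \le \pi\{x', y + r\}$ on their common domain of definition; as $r \to \infty$, the endpoint gap pulls the upper geodesic strictly above in probability, because the rightmost geodesic shifts to the right monotonically in the right endpoint and has continuous dependence.

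For (iv), local optimality of any disjoint optimizer $\pi$ is a standard splicing argument: if some restriction $(\pi_1|_{[t, y_1]}, \ldots, \pi_k|_{[t, y_k]})$ were beaten by a disjoint $k$-tuple $\rho$ from $(t, \pi(t))$ to $(\by, 1)$, replacing the tail of $\pi$ by $\rho$ would give a disjoint $k$-tuple from $\bx$ to $\by$ of strictly greater total length (using (iii) to handle the infinite part), contradicting optimality of $\pi$. For (v), I would first prove existence and uniqueness of the optimizer from $\bx$ to $\by$ by a compactness/tightness argument in the space of disjoint $k$-tuples, using (i) and (ii) to ensure that a limit point $\pi$ must coincide with individual geodesics $\pi\{x_i, 0\}$ at all sufficiently negative times; uniqueness then uses the same resampling plus unique-argmax argument as in (i), applied now to the $k$-line Brownian LPP coming from Proposition \ref{P:brownian-ac}. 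Once (v) is in hand, (vi) follows because any locally optimal $k$-tuple must coalesce with the individual geodesics $\pi\{x_i, 0\}$ at $-\infty$ (by (ii) applied pairwise) and hence agrees with the unique optimizer on $(-\infty, Y]$; local optimality then forces agreement everywhere else by (iv) applied to the unique optimizer.

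The main obstacle is making the infinite-length definition \eqref{E:piSW} play well with splicing in items (iv) and (vi): one has to verify that the parabolic tail cancellations in \eqref{E:piSW} respect the local replacements of path segments, which is precisely the content of (iii), and that the limit $Y$ in (v) coming from coalescence is finite a.s.\ rather than being pushed to $-\infty$ by the multi-point geometry. The second difficulty is handled by iterating the pairwise coalescence from (ii) over the $\binom{k}{2}$ pairs of starting points $x_i < x_j$, which gives a common finite $Y$ below which all $k$ geodesics are disjoint simultaneously, and by one final Brownian Gibbs uniqueness check to rule out ties between $\pi$ and the concatenation of single-point geodesics at the coalescence time.
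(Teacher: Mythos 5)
This proposition is imported verbatim: the paper does not prove it but attributes each item to a specific lemma or proposition in \cite{dauvergne2021disjoint}, so there is no in-paper proof to compare your approach against. That said, your blind reconstruction has several concrete gaps that would prevent it from standing on its own.

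The most serious is in (i), where you assert that the parabolic tail condition $\lim_{z\to-\infty}\pi(z)/(2z^2)=x$ ``forces the two paths to visit the same lines at all sufficiently negative $z$.'' That is false: two parabolic paths can disagree on arbitrarily long intervals as $z\to-\infty$ while both satisfying the same asymptotic. Coalescence of geodesics from a single point is a genuine theorem, not a consequence of the boundary condition, and it is precisely one of the contents of Lemma 4.2 in \cite{dauvergne2021disjoint}. Relatedly, your argument also implicitly assumes that a geodesic restricted to $[z,y]$ achieves the compact last passage value $\cW[(z,\pi(z))\to(y,1)]$; this is local optimality, which is item (iv) and requires proof, so the argument as sketched is circular.

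Item (iii) is not a tautology. Writing out \eqref{E:piSW}, the two sides of the claimed identity involve the correction terms $\cW[(z,\pi_i(z))\to(z_i,1)]$ with \emph{different} right endpoints $z_1\ne z_2$, which do not cancel just because $\pi_1=\pi_2$ on $(-\infty,z_0]$. One also has to account for the different leading terms $\cS(x,z_1)$ versus $\cS(x,z_2)$. The needed cancellation is exactly the statement that $\cS(x,z_1)-\cS(x,z_2)=\lim_{z\to-\infty}\cW[(z,\pi(z))\to(z_1,1)]-\cW[(z,\pi(z))\to(z_2,1)]$, which is the content of the Airy sheet construction in Definition \ref{D:Airy-sheet} (the quantity $\cS'$ and its relation to $\cS$), not a formal consequence of \eqref{E:piSW}. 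Finally, for (ii) monotonicity of rightmost optimizers gives only $\pi\{x,y\}\le\pi\{x',y+r\}$; the quantitative statement that the probability of strict separation tends to $1$ as $r\to\infty$ needs an additional estimate (roughly, that coalescence of the two geodesics occurs below some level that drifts to $-\infty$), and your sketch of (v) treats existence, uniqueness, and the finite coalescence level $Y$ too briefly, as these constitute the bulk of Proposition 4.5 in \cite{dauvergne2021disjoint}.
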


%\begin{lemma}
%	\label{L:mble-funk}
%Fix $\bx \in \R^k_\le$ and a compact set $[-b, b] \sset \R^k_\le$. Then for all $\by \in [-b, b]
%	
%\end{lemma}

We also require a crude bound on the extended Airy sheet, and a simple fact about equality of Airy sheet values.
This next lemma is an immediate consequence of \cite[Lemma 7.7]{dauvergne2021disjoint}, which gives a bound for the whole extended landscape.

\begin{lemma}
	\label{L:tails-Airy-sheet}
For every $\eta > 0$, there exists an $R > 0$ such that
$$
\lf| \cS(\bx, \by) + \|\bx - \by\|_2^2 \rg| \le R(1 + \|\bx\|_1 + \|\by\|_1)^\eta
$$
for all $(\bx, \by) \in \fX$.
\end{lemma}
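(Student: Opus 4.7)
The plan is simply to recognize that $\cS(\bx, \by) = \cL(\bx, 0; \by, 1)$ by the very definition of the extended Airy sheet in Theorem \ref{T:extended-sheet-char}, so the stated bound is nothing other than the restriction of a known bound on the extended directed landscape to the fixed time pair $(s, t) = (0, 1)$. Concretely, I would quote \cite[Lemma 7.7]{dauvergne2021disjoint}, which asserts that for any $\eta > 0$ there is an almost surely finite random $R > 0$ such that
$$
\left| \cL(\bx, s; \by, t) + \frac{\|\bx - \by\|_2^2}{t-s} \right| \le R\bigl(1 + |t - s|^{1/2} + \|\bx\|_1 + \|\by\|_1\bigr)^\eta
$$
holds simultaneously for every $(\bx, \by) \in \fX$ and every $s < t$. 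Specializing to $s = 0, t = 1$ absorbs the $|t-s|^{1/2} = 1$ contribution into the constant and produces exactly the claim.

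There is essentially no obstacle, since all of the work is done in the cited lemma; the only point worth flagging is that the inequality must hold simultaneously over all $k \ge 1$ and over all $(\bx, \by) \in \R^k_\le \X \R^k_\le$ with a single random $R$. This uniformity is already built into the statement of \cite[Lemma 7.7]{dauvergne2021disjoint}, which is formulated directly for the extended landscape across all sizes $k$, so no further argument is required beyond quoting the result.
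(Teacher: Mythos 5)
Your proposal is correct and takes exactly the same route as the paper: the paper's proof of this lemma consists of the single remark that it is an immediate consequence of \cite[Lemma 7.7]{dauvergne2021disjoint}, which bounds the whole extended landscape, and your argument simply spells out the specialization of that bound to $(s,t)=(0,1)$ (absorbing the $|t-s|^{1/2}=1$ term into the constant), which is precisely what the paper leaves implicit.
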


\begin{lemma}[Lemma 7.10, \cite{dauvergne2021disjoint}]
	\label{L:point-push}
	Let $\cS$ be the extended Airy sheet. Then a.s.\ for any $\bx^{(1)} \le \bx^{(2)} \le \bx^{(3)} \le \bx^{(4)} \in \R^k_\le$, and $\by^{(1)} \le \by^{(2)} \le \by^{(3)} \le \by^{(4)} \in \R^k_\le$, if 
	\[
	\cS((\bx^{(2)}, \bx^{(3)}), (\by^{(2)}, \by^{(3)})) = \cS(\bx^{(2)}, \by^{(2)}) + \cS(\bx^{(3)}, \by^{(3)}),    
	\]
	then
	\[
	\cS((\bx^{(1)}, \bx^{(4)}), (\by^{(1)}, \by^{(4)})) = \cS(\bx^{(1)}, \by^{(1)}) + \cS(\bx^{(4)}, \by^{(4)}).
	\]
\end{lemma}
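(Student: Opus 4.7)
The plan is to sandwich the quantity $\cS((\bx^{(1)}, \bx^{(4)}), (\by^{(1)}, \by^{(4)}))$ between two matching inequalities.

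The upper bound is essentially immediate from the definition of the extended Airy sheet as a supremum over disjoint $k$-tuples. Given any disjoint $2k$-tuple of $\cL$-paths $\pi$ from $(\bx^{(1)}, \bx^{(4)})$ to $(\by^{(1)}, \by^{(4)})$, the ordering constraint forces its first $k$ components to form a disjoint $k$-tuple from $\bx^{(1)}$ to $\by^{(1)}$ and its last $k$ components to form a disjoint $k$-tuple from $\bx^{(4)}$ to $\by^{(4)}$. Since total length decomposes additively over this split, taking the supremum yields
\[
\cS((\bx^{(1)}, \bx^{(4)}), (\by^{(1)}, \by^{(4)})) \le \cS(\bx^{(1)}, \by^{(1)}) + \cS(\bx^{(4)}, \by^{(4)}).
\]

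For the matching lower bound I would use the hypothesis to construct a concrete disjoint $2k$-tuple achieving the sum. The hypothesis, combined with the upper bound applied at the inner level, forces an optimizing $2k$-tuple from $(\bx^{(2)}, \bx^{(3)})$ to $(\by^{(2)}, \by^{(3)})$ to split as $\sig = (\sig^{(2)}, \sig^{(3)})$ where each $\sig^{(i)}$ is an optimizer for $\cS(\bx^{(i)}, \by^{(i)})$; disjointness of $\sig$ gives $\sig^{(2)}_k(t) < \sig^{(3)}_1(t)$ on $(0, 1)$. Now pick a \emph{leftmost} disjoint optimizer $\pi^{(1)}$ from $\bx^{(1)}$ to $\by^{(1)}$ and a \emph{rightmost} disjoint optimizer $\pi^{(4)}$ from $\bx^{(4)}$ to $\by^{(4)}$. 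By monotonicity of leftmost/rightmost optimizers under monotone shifts of endpoints (the continuum analogue of Proposition \ref{P:basic-optimizers}(ii), obtained from the uniqueness recalled in Proposition \ref{P:geodesic-properties}(v) together with continuity of $\cL$), we get $\pi^{(1)}_i \le \sig^{(2)}_i$ and $\sig^{(3)}_i \le \pi^{(4)}_i$ for every $i$. Chaining these with $\sig^{(2)}_k < \sig^{(3)}_1$ shows $\pi^{(1)}_k(t) \le \sig^{(2)}_k(t) < \sig^{(3)}_1(t) \le \pi^{(4)}_1(t)$ on $(0, 1)$, so $(\pi^{(1)}, \pi^{(4)})$ is itself a disjoint $2k$-tuple from $(\bx^{(1)}, \bx^{(4)})$ to $(\by^{(1)}, \by^{(4)})$. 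Its length is exactly $\cS(\bx^{(1)}, \by^{(1)}) + \cS(\bx^{(4)}, \by^{(4)})$, giving the reverse inequality.

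The main technical point I expect to be the obstacle is the ``a.s.\ for all $\bx^{(i)}, \by^{(i)}$ simultaneously'' quantifier: the monotonicity of leftmost/rightmost optimizers is clean when endpoints are fixed, and needs to be bootstrapped to hold uniformly in $(\bx, \by)$ on a single probability-one event. The standard route is to first establish uniqueness together with the monotonicity at a countable dense family of rational endpoints, and then to use continuity of $\cS$ and semicontinuity of path length under limits of geodesics to pass to arbitrary $\bx^{(i)}, \by^{(i)}$; the hypothesized equality is a closed condition, so the approximation preserves it. Once the uniform monotonicity is in hand, the two-sided bound above yields the lemma.
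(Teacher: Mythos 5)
This lemma is imported by the paper from \cite{dauvergne2021disjoint} without proof, so there is no in-paper argument to compare against; judged on its own, your sandwich-plus-monotonicity strategy is the right one and is essentially the argument used in that reference. The upper bound is indeed immediate from the definition \eqref{E:multi-point-L}, and the splitting of a $2k$-point optimizer $\sig$ into two $k$-point optimizers $(\sig^{(2)},\sig^{(3)})$ with $\sig^{(2)}_k<\sig^{(3)}_1$ follows correctly from the hypothesis together with the inner-level upper bound. The chain $\pi^{(1)}_k\le\sig^{(2)}_k<\sig^{(3)}_1\le\pi^{(4)}_1$ then produces the required disjoint $2k$-tuple. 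The inputs you need --- existence of optimizers for all endpoints simultaneously, existence of leftmost/rightmost optimizers, and the comparison of a leftmost (resp.\ rightmost) optimizer with an \emph{arbitrary} optimizer at larger (resp.\ smaller) endpoints --- are all available: the last of these factors as ``leftmost $\le$ leftmost $\le$ arbitrary,'' with the first inequality the continuum analogue of Proposition~\ref{P:basic-optimizers}(ii).

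The one place your write-up goes astray is the final paragraph on the ``a.s.\ for all endpoints'' quantifier. The remark that ``the hypothesized equality is a closed condition, so the approximation preserves it'' is backwards: closedness lets you pass the equality \emph{to} a limit of endpoints at which it holds, not \emph{from} the given (possibly irrational) endpoints to rational approximants, and indeed the hypothesis will generically fail at every nearby rational tuple, so there is nothing to approximate from. Fortunately this detour is unnecessary. The monotonicity and lattice (min/max of optimizers is again a pair of optimizers) statements you need are deterministic properties of any realization of the environment, exactly as Proposition~\ref{P:basic-optimizers}(ii) is stated without any almost-sure qualifier in the semi-discrete setting; once they are established in that form, the only probabilistic input is the simultaneous-in-all-endpoints existence of optimizers, and the whole argument runs on a single full-measure event with no rational-approximation step. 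With that repair the proof is complete.
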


Finally, we use the following relationship between extended landscape values and geodesic disjointness.
\begin{prop}[Corollary 1.11, \cite{dauvergne2021disjoint}]
	\label{P:disjointness} A.s. the following holds.
	For every $(\bx, s; \by, t) \in \fX_\uparrow$, we have
	\begin{equation*}
	\cL(\bx, s; \by, t) = \sum_{i=1}^k \cL(x_i, s; y_i, t)
	\end{equation*}
	if and only if there exist $\cL$-geodesics $\pi_1, \dots, \pi_k$ where $\pi_i$ goes from $(x_i, s)$ to $(y_i, t)$, satisfying $\pi_i(r) < \pi_{i+1}(r)$ for all $i \in \II{1, k-1}$ and $r \in (s, t)$.
\end{prop}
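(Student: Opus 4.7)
The plan is to prove the two directions separately; both hinge on the existence of a disjoint optimizer for the multi-point extended landscape, and that existence is where the real technical content sits.

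For the implication $(\Leftarrow)$, suppose disjoint $\cL$-geodesics $\pi_1, \dots, \pi_k$ exist with $\pi_i(r) < \pi_{i+1}(r)$ on $(s,t)$. They form an admissible disjoint $k$-tuple in the supremum \eqref{E:multi-point-L}, so
$$
\cL(\bx, s; \by, t) \ge \sum_{i=1}^k \|\pi_i\|_\cL = \sum_{i=1}^k \cL(x_i, s; y_i, t).
$$
The reverse inequality is automatic: for any admissible disjoint tuple $(\tau_1, \dots, \tau_k)$, each $\|\tau_i\|_\cL \le \cL(x_i, s; y_i, t)$ since $\tau_i$ is a path between those endpoints and $\cL(x_i, s; y_i, t)$ is the supremum of path lengths there. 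Summing and taking the supremum over such tuples yields the matching inequality.

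For the implication $(\Rightarrow)$, assume $\cL(\bx, s; \by, t) = \sum_i \cL(x_i, s; y_i, t)$ and let $\pi = (\pi_1, \dots, \pi_k)$ be any disjoint optimizer. Then $\sum_i \|\pi_i\|_\cL = \cL(\bx, s; \by, t) = \sum_i \cL(x_i, s; y_i, t)$, while each summand satisfies $\|\pi_i\|_\cL \le \cL(x_i, s; y_i, t)$. Equality of the sums forces equality in every term, so each $\pi_i$ is an $\cL$-geodesic from $(x_i, s)$ to $(y_i, t)$. The required strict interior disjointness $\pi_i(r) < \pi_{i+1}(r)$ on $(s,t)$ is built into the definition of an admissible tuple, hence automatic.

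The main obstacle is establishing a.s. existence of a disjoint optimizer uniformly over $(\bx, s; \by, t) \in \fX_\uparrow$. For fixed endpoints I would take a near-optimizing sequence of admissible disjoint tuples, use the tail bounds on $\cL$ (cf. Lemma \ref{L:tails-Airy-sheet}) together with continuity of $\cL$ to extract a uniformly equicontinuous subsequence on compacts, and pass to a subsequential limit. The delicate point is that ordered paths may coalesce in the limit, destroying the strict interior disjointness; I would handle this by working with a rightmost-optimizer notion analogous to Proposition \ref{P:basic-optimizers}, or by arguing that any pair of coalescing geodesics with equal total length can be perturbed apart without decreasing the sum. To promote the statement to be uniform in endpoints on a single almost sure event, I would use density of rational endpoints in $\fX_\uparrow$, joint continuity of $\cL$, and stability of rightmost disjoint optimizers under endpoint perturbations, so that the equivalence proved at a countable dense set extends by continuity to all of $\fX_\uparrow$.
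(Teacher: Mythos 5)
This proposition is not proved in the paper at all: it is imported verbatim as Corollary~1.11 of \cite{dauvergne2021disjoint}, so there is no in-paper argument to compare against. Evaluating your proposal on its own terms: the $(\Leftarrow)$ direction is correct and essentially forced by the definitions, and the two-sided inequality you give also establishes (correctly) that $\cL(\bx, s; \by, t) \le \sum_i \cL(x_i, s; y_i, t)$ always holds, which is worth recording.

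The gap is in $(\Rightarrow)$, and it is exactly where you say it is, but more serious than your sketch suggests. Your argument silently assumes that when $\cL(\bx,s;\by,t)=\sum_i\cL(x_i,s;y_i,t)$ a disjoint optimizer exists -- that is, a $k$-tuple attaining the supremum in \eqref{E:multi-point-L} with the \emph{strict} interior ordering $\pi_i(r)<\pi_{i+1}(r)$. But the set of such strictly ordered tuples is not closed: a near-optimizing sequence can converge to a weakly ordered tuple with touching components, and the touching can occur in the interior, not just at the endpoints. Your proposed fix (``work with a rightmost-optimizer notion'' or ``perturb coalescing geodesics apart without decreasing the sum'') is where the real theorem lives and is not routine. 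In fact, rightmost single-point geodesics from $(x_i,s)$ to $(y_i,t)$ need not be strictly ordered, and it is not clear a priori that coalescing geodesics can be perturbed apart while keeping each component a geodesic. Moreover, the ``stability of rightmost disjoint optimizers under endpoint perturbations'' you appeal to for the uniform-in-endpoints upgrade is itself a nontrivial input (it is essentially Proposition~4.5 of \cite{dauvergne2021disjoint}, restated here as parts (iv)--(vi) of Proposition~\ref{P:geodesic-properties}, and even there only for $\bx$ with strictly increasing coordinates). So the proposal correctly reduces the statement to the existence and stability of disjoint optimizers, but leaves that reduction unresolved; that reduction is, in essence, the content of the cited paper, not a lemma you can wave through.
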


	\subsection{Proof of Theorem \ref{T:landscape-iso}}
	
	In this section, we prove Theorem \ref{T:landscape-iso}. Before doing so, we digress slightly to give a path-based definition of \eqref{E:W-LPP} that may help the reader with intuition. 
	
	\begin{remark}
		\label{R:path-def-infty}
Let $f = (f_1, \dots, f_k), f_i:\R \to \R$. Consider a nonincreasing cadlag function $\pi:(-\infty, y] \to \II{m, n}$, and assume that $\pi$ stabilizes at line $n$ as $z \to -\infty$. We call $\pi$ a path from $(-\infty, n)$ to $(y, m)$. Matching with \eqref{E:pi-length}, define the path length
\begin{equation*}
\|\pi\|_f = \sum_{i = m}^{n} [f_i(\pi_i) - f_i(\pi_{i+1})].
\end{equation*}
In the above sum, for $i \le n$ we let $\pi_i = \inf \{t \in (-\infty, y] : \pi(t) < i\}$ be the time when $\pi_i$ jumps off of line $i$. We also set $\pi_i = y$ if $\pi(t) \ge i$ for all $i$. 
 In the above sum, the term $f_n(\pi_{n+1})$ needs to be defined separately; we simply set it to $0$. Then for $I = (i_1, \dots, i_\ell)$ and $\bp = ((x_1, m_1), \dots, (x_\ell, m_\ell))$ we can write
\begin{equation}
\label{E:f-sum-pi}
f[(-\infty, I) \to \bp] = \sup_\pi \sum_{i=1}^k \|\pi_i\|_f,
\end{equation}
where the supremum is over all $k$-tuples $\pi = (\pi_1, \dots, \pi_k)$, where $\pi_j$ is a path from  $(-\infty, i_j)$ to $p_i = (y_i, m_i)$, and for all $i$ we have $\pi_i < \pi_{i+1}$ on $(-\infty, x_i)$. The equality between the right sides of \eqref{E:f-sum-pi} and \eqref{E:W-LPP} follows from comparing definitions.
	\end{remark}
	
\begin{proof}[Proof of Theorem \ref{T:landscape-iso}, part $1$]
			The asymptotics in \eqref{E:asymptotics} are immediate from Lemma \ref{L:tails-Airy-sheet}. The stability in \eqref{E:WxLx-stable} then follows from \eqref{E:asymptotics} and the fact that $x_1 < x_2 < \dots < x_k$.
		\end{proof}
	
To prove Theorem \ref{T:landscape-iso}.2, we need a few lemmas. The first is a consequence of Proposition \ref{P:geodesic-properties}. Here and throughout the section, we let $\cS, \cL, \cW := W_0 \cL$ be coupled as in Theorem \ref{T:extended-sheet-char}.

	\begin{lemma}
		\label{L:S-lemma}
		Let $\bx \in \R^k_\le$ satisfy $0 < x_1 < \dots < x_k$, let $[a, b] \sset \R$, and let $\pi = (\pi_1, \dots, \pi_k)$, where each $\pi_i$ is the (a.s.\ unique) $\cW$-geodesic from $x_i$ to $0$. Then there exists a random integer $Y < a$ such that for all $z \le Y, I \sset \II{1, k}$ and $\by \in [a, b]^{|I|}_\le$, we have
		\begin{equation}\label{E:S-diff}
		\cS(\bx^I, \by) = \cW[(z, [\pi(z)]^{I}) \to (\by, 1)] + \sum_{i \in I} \| \pi_i|_{(-\infty, z]}\|_\cW.
		\end{equation}
		Here $\| \pi_i|_{(-\infty, z]}\|_\cW$ is a parabolic path length and recall that $\bx^I$ denotes the vector in $\R^{|I|}$ whose coordinates are $x_i, i \in I$.
	\end{lemma}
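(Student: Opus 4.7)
The plan is to use Theorem \ref{T:extended-sheet-char} to rewrite $\cS(\bx^I, \by) = \cW[\bx^I \to \by]$, turning the lemma into an identity about parabolic last passage in $\cW$. The main idea is that for $z$ sufficiently negative, every optimizer from $\bx^I$ to $\by$ coincides with the fixed geodesics $\pi_{i_j}$ on $(-\infty, z]$; once this uniform coalescence is established, Proposition \ref{P:geodesic-properties}(iii) decomposes the optimizer's path length into tail and body pieces, and the identity follows from a two-sided extremal argument.

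To establish uniform coalescence, first apply Proposition \ref{P:geodesic-properties}(v) to $(\bx, 0^k)$: a.s.\ the geodesics $\pi_1, \dots, \pi_k$ to $0$ are the components of the unique optimizer from $\bx$ to $0^k$ for $t$ below some random threshold, so in particular they are pairwise disjoint for such $t$. Next, for each of the finitely many $I \subset \II{1, k}$ set $\by^{I, -} = (a, \dots, a)$ and $\by^{I, +} = (b, \dots, b)$. Applying Proposition \ref{P:geodesic-properties}(v) to $(\bx^I, \by^{I, \pm})$ and then using coalescence of geodesics with distinct right endpoints (Proposition \ref{P:geodesic-properties}(v) applied to singletons), we obtain random thresholds $Y_{I, \pm}$ such that the rightmost optimizer $\sigma^{I, \by^{I, \pm}}$ agrees with $\pi_{i_j}$ on $(-\infty, Y_{I, \pm}]$. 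Monotonicity of the rightmost parabolic optimizer in its endpoints (the parabolic analogue of Proposition \ref{P:basic-optimizers}(ii)) then sandwiches the rightmost optimizer $\sigma^{I, \by}$ for any $\by \in [a, b]^{|I|}_\le$ between $\sigma^{I, \by^{I, -}}$ and $\sigma^{I, \by^{I, +}}$ componentwise. Setting $Y = \min_{I, \pm} Y_{I, \pm}$, for every $t \le Y$ both bounds equal $\pi_{i_j}(t)$, forcing $\sigma^{I, \by}_j(t) = \pi_{i_j}(t)$ uniformly in $I, \by, j$.

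Now fix a deterministic integer $z \le Y$ and any $(I, \by)$. Let $\sigma = \sigma^{I, \by}$ and apply Proposition \ref{P:geodesic-properties}(iii) to the pair $\sigma_j, \pi_{i_j}$, which agree on $(-\infty, z]$, to obtain
$$\|\sigma_j\|_\cW - \|\sigma_j|_{[z, y_j]}\|_\cW = \|\pi_{i_j}\|_\cW - \|\pi_{i_j}|_{[z, 0]}\|_\cW =: \|\pi_{i_j}|_{(-\infty, z]}\|_\cW.$$
Summing over $j$ and using $\sum_j \|\sigma_j\|_\cW = \cS(\bx^I, \by)$ yields
$$\cS(\bx^I, \by) = \sum_j \|\sigma_j|_{[z, y_j]}\|_\cW + \sum_{i \in I}\|\pi_i|_{(-\infty, z]}\|_\cW.$$
Since the restrictions $(\sigma_j|_{[z, y_j]})$ form a disjoint $|I|$-tuple from $(z, [\pi(z)]^I)$ to $(\by, 1)$, the first sum is at most $\cW[(z, [\pi(z)]^I) \to (\by, 1)]$. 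For the reverse inequality, take any disjoint $|I|$-tuple $\tau$ from $(z, [\pi(z)]^I)$ to $(\by, 1)$ and concatenate each $\tau_j$ with $\pi_{i_j}|_{(-\infty, z]}$ to obtain a disjoint $|I|$-tuple $\tilde \tau$ of parabolic paths from $\bx^I$ to $\by$ (disjointness below $z$ comes from Step 1). The same decomposition gives $\sum_j \|\tilde \tau_j\|_\cW = \sum_j \|\tau_j\|_\cW + \sum_{i\in I}\|\pi_i|_{(-\infty, z]}\|_\cW$, and optimality of $\cW[\bx^I \to \by]$ forces this to be $\le \cS(\bx^I, \by)$; maximizing over $\tau$ closes the loop.

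The main obstacle is the uniformity in Step 1: a single random $Y$ must simultaneously handle the uncountable family of endpoints $\by \in [a, b]^{|I|}_\le$. The monotonicity-sandwich argument is precisely what collapses this continuum to the two extremes $\by^{I, \pm}$, where Proposition \ref{P:geodesic-properties}(v) applies directly. The hypothesis $x_1 < \dots < x_k$ is essential throughout: it ensures the $\pi_i$ are disjoint at $z$, so that $(z, [\pi(z)]^I)$ is an admissible starting tuple for multi-point last passage, and it makes the concatenated competitor $\tilde\tau$ in the reverse inequality genuinely disjoint on the tail $(-\infty, z]$.
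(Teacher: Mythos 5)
Your overall strategy — establish uniform coalescence of the parabolic optimizers with the $\pi_{i_j}$ below a random threshold, then decompose the optimizer's length via Proposition \ref{P:geodesic-properties}(iii) — matches the paper's. The gap is in the coalescence step: you invoke ``monotonicity of the rightmost parabolic optimizer in its endpoints'' as a parabolic analogue of Proposition \ref{P:basic-optimizers}(ii). No such parabolic monotonicity is stated in the paper, and it is not a formal analogue: Proposition \ref{P:basic-optimizers}(ii) compares rightmost optimizers of a \emph{finite} last passage problem with fixed finite endpoints, whereas a parabolic optimizer extends to $-\infty$ and is identified only by uniqueness (Proposition \ref{P:geodesic-properties}(v),(vi)). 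To actually prove the sandwich $\sigma^{I,\by^-}\le\sigma^{I,\by}\le\sigma^{I,\by^+}$ you would need to (a) apply Proposition \ref{P:geodesic-properties}(v) to $(\bx^I,\by)$ to get a $\by$-dependent coalescence time $Y(\by)$ so that the three paths share a common start $(z,\pi^I(z))$ at some $z\le Y(\by)$, (b) use local optimality (Proposition \ref{P:geodesic-properties}(iv)) and uniqueness (Proposition \ref{P:geodesic-properties}(vi)) to identify $\sigma^{I,\by}|_{[z,\cdot]}$ with the finite rightmost optimizer from $(z,\pi^I(z))$, (c) apply the finite Proposition \ref{P:basic-optimizers}(ii), and (d) observe that the $\by$-dependence of $Y(\by)$ drops out since the conclusion is on $[z, Y'']$ with $Y''=\min_{\pm}Y_{I,\pm}$. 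This is precisely the content of the paper's proof, which works directly with the finite rightmost optimizers $\rho_{z,\by}$, sandwiches them via Proposition \ref{P:basic-optimizers}(ii) between the restrictions of the two extreme parabolic optimizers, glues over $z$, and identifies the limit via Proposition \ref{P:geodesic-properties}(vi). So your ``parabolic analogue'' is hiding the real work; the step should be expanded.

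Two smaller remarks. The aside about ``coalescence of geodesics with distinct right endpoints (Proposition \ref{P:geodesic-properties}(v) applied to singletons)'' adds nothing: Proposition \ref{P:geodesic-properties}(v) applied to $(\bx^I,\by^{I,\pm})$ already asserts coalescence with the geodesics $\pi\{x_{i_j},0\}=\pi_{i_j}$. Also, your two-sided extremal argument at the end (restriction for $\le$, concatenation for $\ge$) is correct but longer than necessary: once the optimizer $\sigma^{I,\by}$ is known to pass through $(z,\pi^I(z))$, local optimality (Proposition \ref{P:geodesic-properties}(iv)) gives the equality $\sum_j\|\sigma_j|_{[z,y_j]}\|_\cW = \cW[(z,[\pi(z)]^I)\to(\by,1)]$ in one step, which is how the paper concludes.
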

	
	\begin{proof}
		First, since there are only finitely many choices for $I$, it suffices to prove that for each $I$ there exists $Y = Y(I)$ such that \eqref{E:S-diff} holds whenever $z \le Y, \by \in [a, b]^{|I|}_\le$. Without loss of generality, we may then assume $Y = \II{1, k}$.
		
		By Proposition \ref{P:geodesic-properties}(v), the a.s.\ unique optimizers $\tau, \tau'$ from $\bx$ to $a^k$ and $\bx$ to $b^k$ satisfy $\tau(z) = \tau'(z) = \pi(z)$ for all $z \le Y$ for some random integer $Y < a$. Next, for any $\by \in [a, b]^k_\le$ and $z \le Y$, let $\rho_{z, \by}$ be the rightmost optimizer from $(z, \tau(z))$ to $(\by, 1)$ in $\cW$. By the monotonicity in Proposition \ref{P:basic-optimizers}(ii), we have $\tau \le \rho_{z, \by} \le \tau'$ and so
		\begin{equation}
		\label{E:tauzY}
		\tau|_{[z, Y]} = \rho_{z, \by}|_{[z, Y]} = \tau'|_{[z, Y]}.
		\end{equation}
		Therefore $\rho_{z, \by}|_{[z \vee z', \infty)} = \rho_{z', \by}|_{[z \vee z', \infty)}$ for all $z, z' \le Y$. Hence, there is a $k$-tuple $\rho$ from $\bx$ to $\by$ such that $\rho(t) = \rho_{z, \by}(t)$ for all $z \le Y, t \ge z$. By construction $\rho$ is a disjoint $k$-tuple from $\bx$ to $\by$ which is locally optimal since each $\rho_{z, \by}$ is an optimizer. Therefore by Proposition \ref{P:geodesic-properties}(vi), $\rho$ is a.s.\ an optimizer from $\bx \to \by$. The first relationship \eqref{E:isom} with $z = 0$ then implies that almost surely,
		$$
		\cS(\bx, \by) = \cW[\bx \to \by] = \sum_{i=1}^k \|\rho_i\|_\cW.
		$$
		Next, for any $z \le Y$, since $\rho|_{(-\infty, Y]} = \pi|_{(-\infty, Y]}$ by Proposition \ref{P:geodesic-properties}(iii) we can write 
		\begin{equation*}
		\sum_{i=1}^k \|\rho_i\|_\cW = \sum_{i=1}^k \|\rho_i|_{[z, y_i]}\|_\cW - \sum_{i=1}^k \|\pi_i|_{[z, 0]}\|_\cW + \sum_{i=1}^k \|\pi_i\|_\cW.
		\end{equation*}
		The first term on the right side above equals $\cW[(z, \pi(z)) \to (\by, 1)]$ by construction. Also, by Proposition \ref{P:geodesic-properties}(iii) again, for all $i$ we have $\|\pi_i\|_\cW - \|\pi_i|_{[z, 0]}\|_\cW =\|\pi_i|_{(-\infty, z]}\|_\cW$. This yields \eqref{E:S-diff} a.s. for any fixed $\by \in [a, b]^k_\le$. Continuity of both sides in $\by$ gives \eqref{E:S-diff} simultaneously for all $\by \in [a, b]^k_\le$.
	\end{proof}

The next lemma gives a simple measurability property for path length; it is a variant of the stronger \cite[Lemma 4.3]{dauvergne2021disjoint}.

\begin{lemma}
	\label{L:mble}
	Fix $a \in \R,$ and let $\Pi$ be the set of parabolic paths from $x$ to $y$ for some $y \le a, x \ge 0$. Let $\cF_a$ be the $\sig$-algebra generated by all null sets and by $\cW|_{(-\infty, a]}$, and let $F:\Pi \to \R \cup \{-\infty\}$ be the function recording path length in $\cW$. Then $F$ is $\cF_a$-measurable. 
\end{lemma}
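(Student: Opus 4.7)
The plan is to decompose the parabolic path length via \eqref{E:piSW} so as to isolate $\cS(x, y)$, and then to show $\cS(x, y) \in \cF_y$ by deriving a one-sided version of the Cesaro formula that appears in Definition \ref{D:Airy-sheet}.

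First I would write
\[
F(\pi) = \cS(x, y) + \lim_{z \to -\infty}\bigl(\|\pi|_{[z, y]}\|_\cW - \cW[(z, \pi(z)) \to (y, 1)]\bigr).
\]
For every $z \le y \le a$ the two pre-limit quantities are built only from $\cW|_{[z, y]} \subseteq \cW|_{(-\infty, a]}$, so the limit is automatically $\cF_a$-measurable. This reduces the lemma to the assertion that $\cS(x, y) \in \cF_a$ for $x \ge 0$ and $y \le a$.

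For $x = 0$ this is immediate since $\cS(0, y) = \cW_1(y)$ by the coupling of Theorem \ref{T:extended-sheet-char}. For $x > 0$ with $x, y$ rational, the increments $\cS'(x, y, z) = \cS(x, y) - \cS(x, z)$ lie in $\cF_y$ whenever $z \le y$, because the $k \to \infty$ limit defining $\cS'$ only samples $\cW$ on $(-\infty, y]$. What remains is to anchor $\cS(x, y)$ itself. Using the translation symmetry $\cS(x, \cdot) \eqd \cW_1(\cdot - x)$, which identifies $z \mapsto \cS(x, z) + (z - x)^2$ in distribution with the stationary Airy$_2$ process, Birkhoff's ergodic theorem for the discrete shift yields
\[
\lim_{N \to \infty}\frac{1}{N}\sum_{z = y - N}^{y - 1}\bigl[\cS(x, z) + (z - x)^2\bigr] = \E \cA_1(0) \quad \text{a.s.}
\]
Substituting $\cS(x, z) = \cS(x, y) - \cS'(x, y, z)$ and rearranging then exhibits $\cS(x, y)$ as $\E\cA_1(0)$ plus a Cesaro limit of $\cF_y$-measurable quantities. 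Continuity of $\cS$ upgrades the conclusion from rational pairs to all $x \in [0, \infty)$ and $y \in (-\infty, a]$.

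The main obstacle will be justifying the one-sided discrete-time ergodic statement for the Airy$_2$ process sampled at integer spacing: continuous-time stationarity and ergodicity of $\cA_1$ are standard, but the transfer to integer shifts and to one-sided Cesaro sums requires a short separate argument using the mixing properties of $\cA_1$.
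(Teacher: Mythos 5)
Your proposal is correct and matches the paper's proof essentially line for line: reduce via \eqref{E:piSW} to showing $\cS(x,y)\in\cF_a$, observe that the increments $\cS'(x,y,z)$ for $z\le y\le a$ are $\cF_a$-measurable because their defining last passage values only touch $\cW|_{(-\infty,y]}$, and then anchor $\cS(x,y)$ by a one-sided ergodic average of the stationary process $z\mapsto\cS(x,z)+(z-x)^2$. The discretization and one-sided ergodicity concern you flag at the end is a fair point, but the paper's argument carries the same implicit burden: it simply invokes ergodicity of the stationary Airy$_2$ process via Pr\"ahofer--Spohn and applies the ergodic theorem to the integer-sampled one-sided Ces\`aro average without further comment.
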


\begin{proof}
	By looking at the definition of parabolic path length in \eqref{E:piSW}, we see that it is enough to prove that $\cS(\cdot, a)|_{[0, \infty)}$ is $\cF_a$-measurable. By continuity of $\cS$ it suffices to prove that $\cS(x, a)$ is $\cF_a$-measurable for all $x \in \Q^+$. Going back to Definition \ref{D:Airy-sheet}, we have that $\cS'(x, n, a)$ is $\cF_a$-measurable for all $n \le a$. Therefore it is enough to show that almost surely
	\begin{equation*}
	\cS(x, a) = \E\cW_1(0)+\lim_{n \to \infty} \frac{1}{a + n} \sum_{m \in [-n, a] \cap \Z} \cS'(x, a, m) - (m-a)^2.
	\end{equation*}
	By Definition \ref{D:Airy-sheet}, $\cS'(x, a, m) = \cS(x, a) - \cS(x, m)$ so the above is equivalent to the equation 
	\begin{equation*}
	\E\cW_1(0) = \lim_{n \to \infty} \frac{1}{a + n} \sum_{m \in [-n, a] \cap \Z} \cS(x, m) + (m-x)^2,
	\end{equation*}
	which follows from the fact that $f(m) = \cS(x, m) + (m-x)^2$ is a stationary Airy process with pointwise mean $\E\cW_1(0)$, and this process is ergodic, see equation (5.15) in \cite{prahofer2002scale}. 
\end{proof}

Finally, we will use a fact about locally Brownian functions, whose proof we leave as a simple exercise for the reader.

\begin{lemma}
\label{L:B-brownian-motion}
Let $t < a < 0  < b$, let $X$ be a random vector in $\R^k$, and let $W:[t, \infty) \to \R^k$ be any random continuous function such that conditional on $X$ we have
$$
W|_{[a, b]} \ll B|_{[a, b]},
$$
where $B$ is a standard $k$-dimensional Brownian motion started from $B(t) = 0$.
Then for any $[a, b] \sset (t, \infty)$ conditional on both $X$ and $W(0)$ we have
$$
W|_{[a, b]} - W(0) \ll B',
$$
where $B'$ is a standard $k$-dimensional Brownian motion started from $B(0) = 0$.
\end{lemma}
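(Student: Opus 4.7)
The plan is to condition on $X$ throughout (absolute continuity is stable under this), extract the conditional law of $W|_{[a,b]} - W(0)$ given $W(0)$ via disintegration, and then use the Markov structure of Brownian motion to compare this conditional law against a standard Brownian motion starting at $(0,0)$.

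\emph{Step 1: disintegration.} Condition on $X$. Writing $C([a,b], \R^k)$ for continuous $\R^k$-valued paths and $C_0([a,b], \R^k)$ for those vanishing at $0$, the map $\Phi(g) := (g(0),\, g - g(0))$ is a homeomorphism $C([a,b], \R^k) \to \R^k \X C_0([a,b], \R^k)$. Pushing the hypothesis $W|_{[a,b]} \ll B|_{[a,b]}$ forward by $\Phi$ gives $(W(0),\, W|_{[a,b]} - W(0)) \ll (B(0),\, B|_{[a,b]} - B(0))$ on the Polish space $\R^k \X C_0([a,b], \R^k)$. Disintegrating both sides with respect to the first coordinate, for $\mathrm{Law}(W(0))$-almost every $w$ the conditional law of $W|_{[a,b]} - W(0)$ given $W(0) = w$ is absolutely continuous with respect to the conditional law of $B|_{[a,b]} - B(0)$ given $B(0) = w$.

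\emph{Step 2: identifying the conditional law of $B|_{[a,b]} - B(0)$.} By the Markov property of $B$ at time $0$, the pre-$0$ piece $B|_{[t,0]}$ and the post-$0$ increment $B|_{[0,b]} - B(0)$ are independent, and the latter is a standard $k$-dimensional BM starting at $(0,0)$. Conditional on $B(0) = w$, the process $B|_{[t, 0]}$ is a Brownian bridge from $(t, 0)$ to $(0, w)$; the restriction of $B - w$ to $[a, 0]$, run in reverse time from $0$, is therefore a Gaussian process on $[0, -a]$ starting at $0$. Because $a > t$, this restricted bridge law is mutually absolutely continuous with respect to the law of a standard $k$-dimensional BM on $[0, -a]$ starting at $0$, via a standard Cameron-Martin / Girsanov computation; mutual absolute continuity would fail only as one approaches the bridge endpoint at time $t$.

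\emph{Step 3: conclusion.} Interpreting $B'$ on $[a,b]$ as a two-sided standard BM from $(0,0)$ (independent forward piece on $[0,b]$ and backward piece on $[a,0]$), Step $2$ shows that the conditional law of $B|_{[a,b]} - B(0)$ given $B(0) = w$ is mutually absolutely continuous with respect to $\mathrm{Law}(B')$, with no residual $w$-dependence. Chaining this with Step $1$ and carrying the conditioning on $X$ through the argument yields the claim. The one genuinely substantive point is the Cameron-Martin comparison between a Brownian bridge and a free Brownian motion on a sub-interval strictly away from the bridge endpoint; the rest is routine measure theory on Polish spaces.
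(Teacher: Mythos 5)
Your proof is correct and is essentially the intended argument: the paper leaves this lemma as an exercise, and the natural route is exactly what you do --- disintegrate the hypothesis $(W(0), W|_{[a,b]}-W(0)) \ll (B(0), B|_{[a,b]}-B(0))$ over the value at $0$, then observe that conditionally on $B(0)=w$ the forward increment is a fresh Brownian motion while the backward piece is a bridge restricted strictly away from its endpoint at time $t$, hence mutually absolutely continuous with a free Brownian motion by Cameron--Martin. The only point worth making explicit is that the bridge-versus-free-BM comparison holds for Lebesgue-a.e.\ $w$ and that $\mathrm{Law}(W(0)) \ll \mathrm{Law}(B(0)) \ll \mathrm{Leb}$, so the comparison is valid on a set of full $\mathrm{Law}(W(0))$-measure, which you implicitly use when chaining Steps 1 and 2.
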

	
\begin{proof}[Proof of Theorem \ref{T:landscape-iso}, part $2$]

First, by translation invariance of $\cL$ (Equation \eqref{E:translation-L}), we may assume $x_1 > 0$. Now, as in Lemma \ref{L:S-lemma}, let $\pi = (\pi_1, \dots, \pi_k)$, where $\pi_i$ is the unique $\cW$-geodesic from $x_i$ to $0$. Also, for $z < y_1$, define $X^z \in \R^k$ by
$$
X^z_i := \| \pi_i|_{(-\infty, z]}\|_\cW.
$$
Fix $n \in \N$, and let $Y_n$ be the random integer specified by Lemma \ref{L:S-lemma} for the interval $[-n, n]$. For all $\ell \in \II{1, k}$ and $y \in [-n, n]$ we have
\begin{equation}
\label{E:sumi1}
\sum_{i=1}^\ell W_\bx \cL_i(y) = \cS(\bx^{\II{1, \ell}}, y^\ell) = \cW[(Y_n, [\pi(Y_n)]^{\II{1, \ell}}) \to (y^\ell, 1)] + \sum_{i=1}^\ell X^{Y_n}_i.
\end{equation}
Now, using the notation $\tau_I, W_{z, \tau_I}$ from Proposition \ref{P:top-lines} and Remark \ref{R:melons-at-other-times}, for $z \in \Z$ and $I = \{i_1 < \dots < i_k\} \sset \N$,  we set
$$
\cW^{I, z} := W_{z, \tau_I} (\cW_1, \cW_2, \dots, \cW_{i_k}). 
$$
By Proposition \ref{P:top-lines} and \eqref{E:sumi1} we then have
\begin{equation}
\label{E:general-SW-relation}
W_\bx \cL_i(y) = \cW^{\pi(Y_n), Y_n}_i(y) + X^{Y_n}_, \qquad \mathforall i \in \II{1, k}, y \in [-n, n].
\end{equation}
Since there are only countably many choices for $Y_n, \pi(Y_n)$, to complete the proof it suffices to show that for every fixed $I$ and $z \in \Z$ with $z < -n$, conditional on $\cW^{I, z}(0)$ and $X^z$, on the interval $[-n, n]$ the function
$$
[\cW^{I, z} + X^z]-[\cW^{I, z} + X^z](0) = \cW^{I, z} - \cW^{I, z}(0) 
$$
is absolutely continuous with respect to a $k$-dimensional Brownian motion $B'$ of variance $2$ on the same interval started from $B'(0) = 0$. 

By Lemma \ref{L:mble}, $X^z$ is $\cF_z$-measurable. Therefore by Proposition \ref{P:brownian-ac}, conditional on $X^z$, on the interval $[z, n]$ the sequence 
$$
(\cW_1 - \cW_1(z), \dots, \cW_{i_k} - \cW_{i_k}(z)) 
$$
is absolutely continuous with respect to an $i_k$-dimensional Brownian motion of variance $2$. Hence by Proposition \ref{P:brown-abs}, conditional on $X^z$ we have that 
$
\cW^{I, z}|_{[-n, n]} \ll B|_{[-n, n]},
$
where $B$ is an $i_k$-dimensional Brownian motion of variance $2$ started from $B(z) = 0$. Lemma \ref{L:B-brownian-motion} then yields the result.
\end{proof}

We split the proof of Theorem \ref{T:landscape-iso}.3 up using two lemmas.

\begin{lemma}
\label{L:strong-disjointness}
Fix $\bx \in \R^k_\le$ with $x_1 < \dots < x_k$. A.s., there exists a random vector $\by \in [0, \infty)^k_\le$ such that for every $J \sset \II{1, k}$, we have
\begin{equation}
\label{E:xJyJ-1}
\cS(\bx^{J}, \by^J) = \sum_{i \in J} \cS(x_i, y_i), \qquad \mathand \qquad W_\bx \cL[\bx^J \to \by^J] = \sum_{i \in J} W_\bx \cL[x_i \to y_i].
\end{equation}
Also, for any $y \in \R$ a.s.\ there exists $R > 0$ such that for $r \ge R, \ell \in \II{1, k-1}$ we have
\begin{equation}
\label{E:S-x-R}
\cS(\bx, ((y - r)^\ell, y^{k-\ell})) = \cS(\bx^{\II{1, \ell}}, (y - r)^\ell) + \cS(\bx^{\II{\ell + 1, k}}, y^{k-\ell}).
\end{equation}
\end{lemma}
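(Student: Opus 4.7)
The plan is to choose a random $\by = (y_1 < \dots < y_k) \in [0, \infty)^k_\le$ with coordinates spread out enough that all relevant multi-point optimizers become pairwise disjoint, at which point both equations in \eqref{E:xJyJ-1} follow automatically from the sup-over-disjoint-tuples definition of multi-point last passage; a similar spreading-out argument in the parameter $r$ gives \eqref{E:S-x-R}.

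For the first identity in \eqref{E:xJyJ-1}, I would iterate Proposition \ref{P:geodesic-properties}(ii): having chosen $y_1 < \dots < y_{i-1}$, pick $y_i$ so large that the unique $\cW$-geodesic $\pi_i$ from $x_i$ to $y_i$ stays strictly above $\pi_{i-1}$ on their common domain. Since the probability in Proposition \ref{P:geodesic-properties}(ii) tends to $1$, a.s.\ some admissible $y_i$ exists. The resulting $k$-tuple $(\pi_1, \dots, \pi_k)$ is pairwise disjoint; for any $J \sset \II{1,k}$ its restriction $(\pi_i)_{i \in J}$ is disjoint and locally optimal, and by Proposition \ref{P:geodesic-properties}(v)-(vi) it is then the unique optimizer from $\bx^J$ to $\by^J$ in $\cW$. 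Combining with the isometry $\cS(\bx^J, \by^J) = \cW[\bx^J \to \by^J]$ from Theorem \ref{T:extended-sheet-char} yields $\cS(\bx^J, \by^J) = \sum_{i \in J}\|\pi_i\|_\cW = \sum_{i \in J} \cS(x_i, y_i)$. For the second identity, the bound $W_\bx \cL[\bx^J \to \by^J] \le \sum_{i \in J} W_\bx \cL[x_i \to y_i]$ is automatic because components of a disjoint tuple are single-line admissible paths. For the reverse, let $\rho_i$ be the rightmost optimizer for the single-line passage $W_\bx \cL[(x_i, i) \to (y_i, 1)]$. By the endpoint-monotonicity of rightmost optimizers (Proposition \ref{P:basic-optimizers}(ii)), every drop time of $\rho_i$ shifts rightward as $y_i$ grows; by taking $y_i$ further spread out, depending on the realization of $W_\bx\cL$, the drop times of $\rho_i$ can be forced strictly below those of $\rho_{i+1}$, so that $(\rho_i)_{i \in J}$ is a disjoint admissible tuple. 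Both spreading conditions are monotone (``take $y_i$ large enough''), so they can be imposed simultaneously by a single $\by$.

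For \eqref{E:S-x-R}, the inequality $\le$ is automatic by restricting a disjoint $k$-tuple to its first $\ell$ and last $k-\ell$ entries. For $\ge$, take optimizer tuples $\tau^L = (\tau_1, \dots, \tau_\ell)$ for $\cS(\bx^{\II{1,\ell}}, (y-r)^\ell)$ and $\tau^R = (\tau_{\ell+1}, \dots, \tau_k)$ for $\cS(\bx^{\II{\ell+1,k}}, y^{k-\ell})$. For $r$ large, the paths in $\tau^L$ concentrate around the linear interpolation from $(x_i, 0)$ to $(y-r, 1)$, while those in $\tau^R$ concentrate around interpolations ending at $(y, 1)$, so in particular $\tau_\ell(t) < \tau_{\ell+1}(t)$ on $(0, 1)$; the concatenation $(\tau^L, \tau^R)$ is then a disjoint admissible $k$-tuple for the full problem.

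The main obstacle is making the concentration in the last step quantitative and simultaneous over $r \ge R$. Lemma \ref{L:tails-Airy-sheet} only gives a polynomial error with exponent $\eta < 1$, which is too weak for direct transversal tube control. A cleaner route is to reformulate the disjointness $\tau_\ell(t) < \tau_{\ell+1}(t)$ via Proposition \ref{P:disjointness} as a strict sheet-value comparison $\cS((x_\ell, x_{\ell+1}), (y_L, y_R)) < \cS(x_\ell, y_L) + \cS(x_{\ell+1}, y_R)$ at suitable intermediate points $y_L \ll y_R$, then to verify this inequality using the dominant $-\|\bx - \by\|_2^2$ parabolic term in Lemma \ref{L:tails-Airy-sheet}, which outweighs the subpolynomial error once the $L/R$ gap is large. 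The uniformity in $r \ge R$ can then be bootstrapped via Lemma \ref{L:point-push} applied to the scale $r = R$. The same reformulation also tightens the $W_\bx\cL$ disjointness step in the second identity of \eqref{E:xJyJ-1}.
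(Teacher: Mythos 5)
Your argument for the two identities in \eqref{E:xJyJ-1} is essentially the same as the paper's: build $\by$ inductively so that the $\cW$-geodesics $\pi_i$ and the rightmost $W_\bx\cL$-optimizers $\rho_i$ are pairwise disjoint, then deduce both equalities from disjointness. One detail worth flagging: for the $W_\bx\cL$ part you invoke only the endpoint-monotonicity in Proposition \ref{P:basic-optimizers}(ii) to claim that the drop times of $\rho_{i+1}$ can be pushed past those of $\rho_i$ by taking $y_{i+1}$ large. Monotonicity only says they move rightward, not that they diverge; to guarantee divergence you need the asymptotics \eqref{E:asymptotics} (which force line $j+1$ to dominate line $j$ at $+\infty$, so that the rightmost optimizer delays all of its up-jumps as the terminal point moves right). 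The paper cites \eqref{E:asymptotics} for exactly this step.

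For \eqref{E:S-x-R} there is a genuine gap. You correctly identify Lemma \ref{L:point-push} as the tool for pushing from one scale to all $r \ge R$, but your proposed route to the base case does not work. First, there is a sign error: by Proposition \ref{P:disjointness}, the existence of disjoint geodesics from $(x_\ell, 0)$ to $(y_L, 1)$ and $(x_{\ell+1}, 0)$ to $(y_R, 1)$ is equivalent to the \emph{equality} $\cS\bigl((x_\ell, x_{\ell+1}), (y_L, y_R)\bigr) = \cS(x_\ell, y_L) + \cS(x_{\ell+1}, y_R)$, not the strict inequality you wrote (strict inequality is what holds when they are \emph{not} disjoint, since $\le$ is automatic). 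Second, even after correcting the sign, Lemma \ref{L:tails-Airy-sheet} gives only a crude polynomial-error bound on $\cS + \|\bx - \by\|_2^2$ and cannot certify an exact equality of sheet values, so the ``dominant parabolic term'' comparison cannot close the argument. The much shorter route, which the paper uses, is to observe that you already have the base case in hand: the vector $\by$ you constructed for \eqref{E:xJyJ-1} satisfies $\cS(\bx, \by) = \cS(\bx^{\II{1,\ell}}, \by^{\II{1,\ell}}) + \cS(\bx^{\II{\ell+1,k}}, \by^{\II{\ell+1,k}})$ for every $\ell$. After translating $y$ to $0$ (Equation \eqref{E:translation-L}) and using the flip symmetry \eqref{E:sLxsyt} to reduce to $\cS(\bx, (0^\ell, r^{k-\ell}))$, set $R = y_k$ and apply Lemma \ref{L:point-push} with the inner vectors $\by^{\II{1,\ell}}$, $\by^{\II{\ell+1,k}}$ pushed out to $0^\ell$ and $r^{k-\ell}$ for $r \ge R$. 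No concentration or tube estimate is needed.
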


\begin{proof}
Let $Z_0(0)$ be as in Theorem \ref{T:landscape-iso}.1, 
and let $\tau_i\{y\}$ be the rightmost geodesic in $W_\bx \cL$ from $(Z_0(0), i)$ to $(y, 1)$. Also, let $\pi_i\{y\}$ be the a.s unique geodesic in $\cW$ from $x_i$ to $y$. To prove \eqref{E:xJyJ-1}, it is enough to show that we can find random points $0 = y_1 < \dots < y_k \in \Z$ such that $\tau_i\{y_i\}(z) < \tau_{i+1}\{y_{i+1}\}(z)$ and $\pi_i\{y_i\}(z) < \pi_{i+1}\{y_{i+1}\}(z)$ for all $z < y_i, i \in \II{1, k-1}$. 

We do this by induction on $\ell \in \II{1, k}$. The base case $\ell = 1$ is trivially true. Suppose now that we have constructed $y_1, \dots, y_\ell$. Then we let $y_{\ell + 1}$ be the first integer such that $\tau_\ell\{y_\ell\}(z) < \tau_{\ell+1}\{y_{\ell+1}\}(z)$ and $\pi_\ell\{y_\ell\}(z) < \pi_{\ell+1}\{y_{\ell+1}\}(z)$ for all $z < y_\ell$. We need to justify that such a $y_{\ell + 1}$ exists. This uses the asymptotics in \eqref{E:asymptotics} to establish the inequality for $\tau_\ell, \tau_{\ell + 1}$, and the disjointness estimate in Proposition \ref{P:geodesic-properties}(ii) applied to the points $x_\ell < x_{\ell + 1}$ and $y_\ell$ to establish the inequality for $\pi_\ell, \pi_{\ell + 1}$. Note that Proposition \ref{P:geodesic-properties}(ii) only applies to fixed triples, whereas we are using a random point $y_\ell$. This does not cause any issues since $y_\ell$ can only take on countably many values.

We move to \eqref{E:S-x-R}. By translation invariance of $\cL$, \eqref{E:translation-L}, we may assume $y = 0$. Moreover, by the flip symmetry \eqref{E:sLxsyt}  it is enough to show that there exists $R > 0$ such that for all $r \ge R$ and $\ell \in \II{1, k-1}$ we have
\begin{equation}
\label{E:S-yy1}
\cS(\bx, (0^\ell, r^{k-\ell})) = \cS(\bx^{\II{1, \ell}}, 0^\ell) + \cS(\bx^{\II{\ell + 1, k}}, r^{k-\ell}).
\end{equation} 
Letting $\by = (y_1, \dots, y_k)$ be as constructed above, \eqref{E:xJyJ-1} implies that
\begin{equation}
\label{E:S-yy}
\cS(\bx, \by) = \cS(\bx^{\II{1, \ell}}, \by^{\II{1, \ell}}) + \cS(\bx^{\II{\ell + 1, k}}, \by^{\II{\ell + 1, k}}).
\end{equation} 
Now, letting $R = y_k$ and using that $y_1 = 0$, \eqref{E:S-yy1} follows from \eqref{E:S-yy} and Lemma \ref{L:point-push}.
\end{proof}

\begin{lemma}
\label{L:part-3-lemma}
A.s., for every $J \sset \II{1, k}$ and $\by \in \R^{|J|}_\le$, we have
\begin{equation}
\label{E:SCW}
\begin{split}
\cS(\bx^J, 0^{|J|}) - \cS(\bx^J, \by)
= &W_\bx \cL[(-\infty, J) \to (0^{|J|}, 1)] - W_\bx \cL[(-\infty, J) \to (\by, 1)].
\end{split}
\end{equation}
\end{lemma}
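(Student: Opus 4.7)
The plan reduces \eqref{E:SCW} to a constancy statement via Lemma \ref{L:S-lemma} and the stability in Theorem \ref{T:landscape-iso}(1), and then uses the Pitman-transform isometry (Proposition \ref{P:top-lines}) together with the identification of $W_\bx \cL$ from the proof of Theorem \ref{T:landscape-iso}(2) to establish that constancy, closing a residual boundary discrepancy with the metric composition law.

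First, I would fix $J \subset \II{1, k}$ and $\by \in \R^{|J|}_\le$ and choose $n$ large enough that $\by, 0^{|J|} \in [-n, n]^{|J|}_\le$. Let $Y_n$ be the random integer from Lemma \ref{L:S-lemma} for the interval $[-n, n]$, let $Z_0 = Z_0(-n)$ be the stability threshold from Theorem \ref{T:landscape-iso}(1), and take $z \le \min(Y_n, Z_0)$. Applying Lemma \ref{L:S-lemma} to $\bp = 0^{|J|}$ and $\bp = \by$ and subtracting, the tail contributions $\sum_{i \in J} \|\pi_i|_{(-\infty, z]}\|_\cW$ cancel; applying stability similarly, the terms $\sum_{i \in J} W_\bx \cL_i(z)$ cancel. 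Hence \eqref{E:SCW} reduces to
\begin{equation*}
\cW[(z, [\pi(z)]^J) \to (0^{|J|}, 1)] - \cW[(z, [\pi(z)]^J) \to (\by, 1)] = W_\bx \cL[(z, J) \to (0^{|J|}, 1)] - W_\bx \cL[(z, J) \to (\by, 1)],
\end{equation*}
i.e., the map $\bp \mapsto \cW[(z, [\pi(z)]^J) \to (\bp, 1)] - W_\bx \cL[(z, J) \to (\bp, 1)]$ should agree at $\bp = 0^{|J|}$ and $\bp = \by$.

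To establish this, I would take $z = Y_n$ and introduce the Pitman-transformed environment $\cW^{\pi(Y_n), Y_n} = W_{Y_n, \tau_{\pi(Y_n)}}(\cW_1, \ldots, \cW_{\pi_k(Y_n)})$. Proposition \ref{P:top-lines} in the shifted form of Remark \ref{R:melons-at-other-times} yields the isometry $\cW[(Y_n, [\pi(Y_n)]^J) \to (\bp, 1)] = \cW^{\pi(Y_n), Y_n}[(Y_n, J) \to (\bp, 1)]$, while Equation \eqref{E:general-SW-relation} from the proof of Theorem \ref{T:landscape-iso}(2) identifies $W_\bx \cL|_{[-n, n]} = \cW^{\pi(Y_n), Y_n}|_{[-n, n]} + X^{Y_n}$. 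Since additive constants per line do not affect path lengths, the two environments give identical last passage values for paths whose jump times lie entirely in $[-n, n] \times \II{1, k}$.

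The main obstacle, and the crux of the proof, is that paths from $(Y_n, J)$ to $(\bp, 1)$ must traverse the initial segment $[Y_n, -n]$ where $W_\bx \cL$ and $\cW^{\pi(Y_n), Y_n}$ may disagree by more than a per-line constant. I would close this gap by applying the metric composition law (Proposition \ref{P:MC-law}) to split each last passage value at the vertical slice $x = -n$: the ``post-$(-n)$'' factors coincide across the two environments by the restriction identity above, while for the ``pre-$(-n)$'' factors I would use the monotonicity of rightmost optimizers (Proposition \ref{P:basic-optimizers}(ii)) together with the geodesic-tail stabilization $\pi(w) = \tau(w)$ for $w \le Y_n$ from the proof of Lemma \ref{L:S-lemma} to argue that, for $n$ large enough, the maximizing intermediate line configuration at $x = -n$ can be chosen uniformly across $\bp \in [-n, n]^{|J|}_\le$. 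Under this uniform choice the pre-$(-n)$ contributions differ between the two environments by a $\bp$-independent constant, which cancels upon taking the difference for $\bp = 0^{|J|}$ vs. $\bp = \by$, establishing \eqref{E:SCW}.
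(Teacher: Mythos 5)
Your proof follows the same overall scaffolding as the paper's (reduce via Lemma \ref{L:S-lemma} and stability, apply Proposition \ref{P:top-lines} to pass to the shifted Pitman-transformed environment $\cW^{\pi(Y_n),Y_n}$, use the identification with $W_\bx\cL$ on $[-n,n]$, and close the discrepancy on the segment between the stabilization time and $-n$ via metric composition). However, the step you flag as ``the crux'' — that the maximizing intermediate configuration at $x=-n$ can be chosen uniformly across $\bp$ — is not adequately justified, and the tools you list do not suffice.

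Concretely: the geodesic-tail stabilization from the proof of Lemma \ref{L:S-lemma} asserts $\pi(w)=\tau(w)$ only for $w\le Y$, where $Y$ is \emph{strictly to the left} of the relevant vertical slice (denoted $a$ there, $-n$ in your notation); it says nothing about the segment $[Y,-n]$. Monotonicity of rightmost optimizers (Proposition \ref{P:basic-optimizers}(ii)) yields only an ordering of the crossing configurations as $\bp$ varies, not equality, and the disjointness constraint imposed by the endpoints gives merely a one-sided bound $\pi_i(-n)\le j_i$. What is actually needed — and what the paper proves — is that the rightmost optimizer from $(Y_N,J)$ to $((-m)^\ell,1)$ has constant paths on the lines of $J$ all the way up to $-N$. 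The paper establishes this by translating the condition through Proposition \ref{P:top-lines} and Lemma \ref{L:S-lemma} into an equality of extended Airy sheet values, which then follows from Equation \eqref{E:S-x-R} of Lemma \ref{L:strong-disjointness}, choosing $N\ge R$. This disjointness input (which ultimately uses Lemma \ref{L:point-push}) is essential and has no substitute in your argument; without it, the claimed uniformity of the crossing configuration at $-n$ is unjustified and the proof does not close.
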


In the proof, we use notation and results contained within the proof of Theorem \ref{T:landscape-iso}.2.

\begin{proof}
	Both sides of \eqref{E:SCW} are continuous in $\by$, so it suffices to prove that the equality holds a.s.\ for fixed $J = \{j_1 < \dots< j_\ell\}$ and $\by \in \R^\ell_\le$. Let $[-m, m]$ be an interval containing all $y_i$, let $R > 0$ be as in Lemma \ref{L:strong-disjointness} with $y = -m$, and let $Z_0(-m) \le -m$ be as in Theorem \ref{T:landscape-iso}.1. Set
	$$
	N = \max (\cl{R}, - Z_0(-m)).
	$$
Letting $Y_n, n \in \N$ and $\pi$ be as in the proof of Theorem \ref{T:landscape-iso}.2, by Lemma \ref{L:S-lemma}, the left side of \eqref{E:SCW} equals
\begin{equation*}
\cW[(Y_N, [\pi(Y_N)]^{J}) \to (0^\ell, 1)] - \cW[(Y_N, [\pi(Y_N)]^{J}) \to (\by, 1)].
\end{equation*}
Using the notation $\cW^{I, z}$ from Theorem \ref{T:landscape-iso}.2, by Proposition \ref{P:top-lines} this equals
	\begin{equation}
	\label{E:Wpi1}
	\cW^{\pi(Y_N), Y_N}[(Y_N, J) \to (0^{\ell}, 1)] - \cW^{\pi(Y_N), Y_N}[(Y_N, J) \to (\by, 1)].
	\end{equation}
	We claim that \eqref{E:Wpi1} equals
	\begin{equation}
	\label{E:Wpi2}
	\cW^{\pi(Y_N), Y_N}[(-N, J) \to (0^{\ell}, 1)] - \cW^{\pi(Y_N), Y_N}[(-N, J) \to (\by, 1)].
	\end{equation}
	For this, it is enough to show that rightmost optimizers in $\cW^{\pi(Y_N), Y_N}$ from $(Y_N, J)$ to  $(0^\ell, 1)$ and $(\by, 1)$ simply have constant paths following the lines in $J$ up to time $-N$. For this, by the monotonicity in Proposition \ref{P:basic-optimizers}(ii), it suffices to prove the same claim for the rightmost optimizer $\pi = (\pi_1, \dots, \pi_\ell)$ from 
	$(Y_N, J)$ to  $((-m)^\ell, 1)$. Again using Proposition \ref{P:basic-optimizers}(ii), we have $\pi_i \ge \tau^i_1$, where $\tau^i = (\tau^i_1, \dots, \tau^i_{k - j_i + 1})$ is the rightmost optimizer in $\cW^{\pi(Y_N), Y_N}$ from $(Y_N, \II{j_i, k})$ to $((-m)^{k - j_i + 1}, 1)$. The paths in $\tau^i$ follow the lines $j_i, \dots, k$ up to time $-N$ if and only if
	\begin{equation*}
	\begin{split}
\cW^{\pi(Y_N), Y_N}[(Y_N, \II{1, j_i - 1}) \to &((-N)^{j_i - 1}, 1)] +\cW^{\pi(Y_N), Y_N}[(Y_N, \II{j_i, k}) \to ((-m)^{k - j_i}, 1)]  \\
&= \cW^{\pi(Y_N), Y_N}[(Y_N, \II{1, k}) \to ((-N)^{j_i - 1}, (-m)^{k - j_i + 1}), 1)].
\end{split} 
	\end{equation*}
	By Proposition \ref{P:top-lines}, this equality is equivalent to an equality of last passage values in $\cW$,
	\begin{equation*}
	\begin{split}
	\cW[(Y_N, [\pi(Y_N)]^{\II{1, j_i - 1}}) \to &((-N)^{j_i - 1}, 1)] +\cW[(Y_N, [\pi(Y_N)]^{\II{j_i, k}}) \to ((-m)^{k - j_i}, 1)]  \\
	&= \cW[(Y_N, \pi(Y_N)) \to ((-N)^{j_i - 1}, (-m)^{k - j_i + 1}), 1)],
	\end{split} 
	\end{equation*}
	which by Lemma \ref{L:S-lemma}, is equivalent to an equality for extended Airy sheet values: 
	\begin{equation*}
	\cS(\bx^{\II{1, j_i - 1}},(-N)^{j_i - 1}) +  \cS(\bx^{\II{j_i, k}},(-m)^{j_i - 1}) = \cS(\bx^{\II{1, k}},((-N)^{j_i - 1},(-m)^{j_i - 1})).
	\end{equation*}
	This equality follows from \eqref{E:S-x-R} since $N \ge R$. This completes the proof that \eqref{E:Wpi1}=\eqref{E:Wpi2}. 
	
	Now, increments of $\cW^{\pi(Y_N), Y_N}$ and $W_\bx \cL$ are the same on the interval $[-N, N]$ by \eqref{E:general-SW-relation}, and since $-N \le Z_0(-m)$, the right side of \eqref{E:SCW} is unchanged if we replace $-\infty$ with $-N$. Therefore \eqref{E:Wpi2} equals the right side of \eqref{E:SCW}, completing the proof.
\end{proof}

\begin{proof}[Proof of Theorem \ref{T:landscape-iso}, part $3$]
	By Lemma \ref{L:part-3-lemma}, for every $J \sset \II{1, k}$, there exists a constant $\al(J)$ such that
$$
\cS(\bx^J, \by) = W_\bx \cL[(-\infty, J) \to (\by, 1)] + \al(J)
$$
for all $\by$. We just need to show that $\al(J) = 0$ for all $J$. First, \eqref{E:xJyJ-1} in Lemma \ref{L:strong-disjointness} implies that 
$$
\al(J) = \sum_{\ell\in J} \al(\ell), \qquad \mathand \qquad \al(\II{1, \ell}) = \al(\II{1, \ell-1}) + \al(\ell)
$$
for all $J, \ell$, so it is enough to show that $\al(\II{1, \ell}) = 0$ for all $\ell \in \II{1, k}$. This follows from the definitions \eqref{E:WLx} and \eqref{E:W-LPP}, which together imply that 
\[
\cS(\bx^{\II{1, \ell}}, 0^\ell) = \sum_{i=1}^\ell W_\bx \cL_i(0) = W_\bx \cL[(-\infty, \II{1, \ell}) \to (0^\ell, 1)]. \qedhere \] 
\end{proof}

\section{Consequences}

In this section we prove all the remaining theorems. We start with Theorems \ref{T:Difference}, \ref{T:geodesic-frame}, and \ref{T:KPZ-fixed-point}, as Theorem \ref{T:main-comparison} is a bit more involved. 

\begin{proof}[Proof of Theorem \ref{T:Difference}] Part $1$ is immediate from parts $1$ and $2$ of Theorem \ref{T:landscape-iso} and the definition \eqref{E:WLx} of $W_\bx \cL$. For part $2$, using part $1$ and the asymptotics in Theorem \ref{T:landscape-iso}.1 there exists a random integer $N < a-1$ such that for all $y \in [a, b]$ we can write
$$
A^{x_1, x_2}(y) = \max_{N \le z \le y} W_\bx \cL_2(z) - W_\bx \cL_1(z).
$$
Therefore we just need to show that for all $n \in - \N, n < a$, that the process $\max_{n \le z \le y} W_\bx \cL_2(z) - W_\bx \cL_1(z), y \in [a, b]$ is absolutely continuous with respect to the running maximum of a Brownian motion $B$ of variance $4$ on $[a, b]$, started at time $a-1$. This follows from local absolute continuity of $W_\bx \cL_2 - W_\bx \cL_1$ with respect to Brownian motion, Theorem \ref{T:landscape-iso}.2. Theorem \ref{T:landscape-iso}.2 also implies that the set of points $y$ where
\begin{equation*}
W_\bx \cL_2(y) - W_\bx \cL_1(y) = \sup_{z \le y} W_\bx \cL_2(z) - W_\bx \cL_1(z) = A^{x_1, x_2}(y)
\end{equation*}
are exactly the support of the associated measure $\mu_{x_1, x_2}$. In other words, there cannot exist a point $y$ where
$$
W_\bx \cL_2(y) - W_\bx \cL_1(y) = \sup_{z \le y'} W_\bx \cL_2(z) - W_\bx \cL_1(z)
$$ 
for all $y'$ in some open neighbourhood containing $y$, since a.s.\ no such $y$ exists for Brownian motion. Therefore by part $1$ of Theorem \ref{T:Difference}, the support of $\mu_{x_1, x_2}$ is the set of times where
$$
\cL((x_1, x_2), 0; z^2, 1) = \cL(x_1, 0; z, 1) + \cL(x_2, 0; z, 1).
$$
By Proposition \ref{P:disjointness}, this is the same as the second set described in Theorem \ref{T:Difference}.$4$.
\end{proof}

\begin{proof}[Proof of Theorem \ref{T:geodesic-frame}] By shift and scale invariance of $\cL$, we can set $s=0, t=1$. For every $x_1, x_2, y_1, y_2 \in \Q$, the $\sig$-algebra $\cF_{0, 1}$ contains the information about whether or not the geodesics from $(x_1, 0)$ to $(y_1, 1)$ and $(x_2, 0)$ to $(y_2, 1)$ are disjoint. Using Proposition \ref{P:disjointness}, this is exactly the set where
$$
\cL((x_1, x_2), 0; (y_1, y_2), 1) = \cL(x_1, 0; y_1, 1) + \cL(x_2, 0; y_2, 1).
$$
By continuity of the extended landscape $\cL$ and Theorem \ref{T:Difference}.$4$, this gives us access to all the supports $S_{x_1, x_2}$ of the measures $\mu_{x_1, x_2}$ with CDFs
$$
A^{x_1, x_2}(z) = \cL(x_2, 0; z, 1) - \cL(x_1, 0; z, 1)
$$ 
for $x_1 < x_2 \in \Q$. Now, fix $x_1 < x_2 \in \Q$ and a compact rational interval $[a, b]$. By Theorem \ref{T:Difference}.1 and the asymptotics in \eqref{E:asymptotics}, almost surely for all large enough $n \in \N$ we have 
\begin{equation}
\label{E:A-W-relate}
A^{x_1, x_2}(z) = \max_{-n \le y \le z} W_\bx \cL_2(y) - W_\bx \cL_1(y)
\end{equation}
for all $z \in [a, b]$.
For each $n$, let $S^n_{x_1, x_2} \sset [-n, \infty)$ be the support of the measure corresponding to the CDF $\max_{-n \le y \le z} W_\bx \cL_2(y) - W_\bx \cL_1(y)$. Since $W_\bx \cL_2- W_\bx \cL_1$ is absolutely continuous with respect to Brownian motion on $[-n, b]$ by Theorem \ref{T:landscape-iso}.3, by Theorem 1 in \cite{taylor1966exact} (see also Section 6.4 in \cite{morters2010brownian}), there is an explicit function $H$ such that almost surely,
$$
H(S^n_{x_1, x_2} \cap [a, b]) = \max_{-n \le y \le b} [W_\bx \cL_2(y) - W_\bx \cL_1(y)] - \max_{-n \le y \le a} [W_\bx \cL_2(y) - W_\bx \cL_1(y)].
$$
(The exact nature of the function $H$ is not important; as an aside, it is type of generalized Hausdorff measure). By \eqref{E:A-W-relate}, for all large enough $n$ the right hand side above equals $A^{x_1, x_2}(b) - A^{x_1, x_2}(a)$ and $S^n_{x_1, x_2} \cap [a, b]= \mu_{x_1, x_2} \cap [a, b]$. Therefore a.s., 
$$
H(\mu_{x_1, x_2} \cap [a, b]) = A^{x_1, x_2}(b) - A^{x_1, x_2}(a).
$$
This holds simultaneously a.s. for all rationals $a <b, x_1 < x_2$. Next, a.s we have
\begin{align*}
A^{x_1, x_2}(a) &= \lim_{n \to \infty} \frac{1}{n} \sum_{i=1}^n A^{x_1, x_2}(a) - A^{x_1, x_2}(i) + \E A^{x_1, x_2}(i), \qquad \mathand \\
\cL(x_2, 0; a, 1) &= \lim_{n \to \infty} \frac{1}{n} \sum_{i=1}^n A^{x_2 -i, x_2}(a)  + \E \cL(x_2 - i, 0; a, 1)
\end{align*}
This follows from ergodicity of the stationary Airy processes $\cL(\cdot; 0, a, 1) + \E \cL(\cdot; 0, a, 1)$ and $\cL(x_i; 0, \cdot, 1) + \E \cL(x_i; 0, \cdot, 1)$, see equation (5.15) in \cite{prahofer2002scale}. Putting all this together gives that each of the points $\cL(x_2, 0; a, 1), x_2, a \in \Q$ is $\cF_{0, 1}$-measurable. Continuity of $\cL$ then gives the result.
\end{proof}

\begin{proof}[Proof of Fact \ref{F:KPZFP-structure} and Theorem \ref{T:KPZ-fixed-point}]
By scale invariance of $\cL$, we may assume $t = 1$. Let $\bp = (p_1, p_2)$. By Theorem \ref{T:landscape-iso}, we have 
\begin{equation}
\label{E:ht-L}
\begin{split}
\mathfrak{h}_t(y) &= \max \lf(\cL(p_1, 0; y, 1) + a_1, \cL(p_2, 0; y, 1) + a_2 \rg), \\
\cL(p_1, 0; y, 1) &=  W_\bp\cL_1(y), \\
\qquad \cL(p_2, 0; y, 1) &= W_\bp\cL_1(y) + \sup_{z \le y} [W_\bp\cL_2(z) - W_\bp\cL_1(z)].
\end{split}
\end{equation}
From this representation, it is clear that an $A \in [-\infty, \infty]$ satisfying the conditions of Fact \ref{F:KPZFP-structure} exists. The fact that $A \ne \pm \infty$ follows from the asymptotics in Theorem \ref{T:landscape-iso}.1. Next, fix $n \in \N$, and define $W^n_\bp\cL:[-n, \infty) \to \R^2$ by 
$$
W^n_\bp\cL(x) = \begin{cases}
W_\bp\cL(x) - W_\bp\cL(-n), \qquad x \in [-n, n] \\
W_\bp\cL(n) + B(x - n), \qquad x \ge n
\end{cases}
$$
where $B$ is an independent $2$-dimensional Brownian motion of variance $2$ with $B(0) = 0$. Consider the process $\mathfrak{h}_t^n = \max (M^n_1, M^n_2):[-n, \infty) \to \R$, where 
\begin{equation}
\label{E:Mn1}
\begin{split}
M^n_1(x) &= W^n_\bp\cL[(-n, 1) \to (x, 1)] + \max(a_1 + W_\bp \cL_1(-n), a_2 + W_\bp \cL_2(-n) + 1) , \\
M^n_2(x) &= W^n_\bp\cL[(-n, 2) \to (x, 1)] + a_2 + W_\bp \cL_2(-n).
\end{split}
\end{equation}
By Theorem \ref{T:landscape-iso}.2 and the spatial stationarity \eqref{E:translation-L} for $\cL$, conditional on $W_\bp \cL(-n)$ the process $W_\bp\cL:[-n, \infty) \to \R$ is absolutely continuous with respect to a $2$-dimensional Brownian motion of variance $2$ on $[-n, \infty)$ started from $B(-n) = 0$.
Adding the independent Brownian motion to the end of $W^n_\bp$ ensures that we can make the comparison on all of $[-n, \infty)$, rather than just on $[-n, n]$.

 Therefore conditional on $W_\bp \cL(-n)$, the processes $M^n_1, M^n_2$ satisfy the conditions of Proposition \ref{P:pre-kpzfp}. Here we have used that 
$$
\max(a_1 + W_\bp \cL_1(-n), a_2 + W_\bp \cL_2(-n) + 1) > a_2 + W_\bp \cL_2(-n).
$$
Therefore letting $\tau_n$ denote the maximal $x \in (-n, \infty)$ such that $\mathfrak{h}_t^n(x) = M^n_1(x)$, by Proposition \ref{P:pre-kpzfp} we have that
$$
(\tau_n, \mathfrak{h}_t^n|_{[-n, \infty)} - \mathfrak{h}_t^n(\tau_n)) \ll (X, [B + R](-X + \cdot)|_{[-n, \infty)}),
$$
where $B, R, X$ are as in the statement of Theorem \ref{T:KPZ-fixed-point}. Finally, comparing \eqref{E:ht-L} and \eqref{E:Mn1} and using the asymptotics in Theorem \ref{T:landscape-iso}.1, on any interval $I$, there exists a random $N \in \N$ such that
$$
(\tau_N, \mathfrak{h}_t^N|_{I} - \mathfrak{h}_t^N(\tau_N)) = (A, \mathfrak{h}_t|_{I} - \mathfrak{h}_t(A)),
$$
yielding the result.
\end{proof}

For Theorem \ref{T:main-comparison}, we first need an analogue of Proposition \ref{P:Wsig-extension} for side-to-side, rather than bottom-to-top, last passage values. We will need a notion of discrete last passage percolation. Consider an $m \X n$ array $G = G_{i, j}, i \in \II{1, m}, j \in \II{1, n}$. For $k \in \N$ and vectors $I, J \in \II{1, n}^k_\ge$ define the \textbf{last passage value}
\begin{equation*}
G[(1, I) \to (m, J)] = \sup_{\pi_1, \dots, \pi_k} \sum_{i=1}^k \sum_{v \in \pi} G_v.
\end{equation*}
Here the supremum is over all $k$-tuples of disjoint lattice paths $\pi_1, \dots, \pi_k$, where $\pi_i$ starts at $(1, I_i)$, ends at $(n, J_i)$ and only moves up and to the right in the coordinate system of Figure \ref{fig:dis-opt}, i.e. all of its steps are in $\{(0, -1), (1, 0)\}$. If no such disjoint paths exist, we set $G[(1, I) \to (n, J)] = -\infty$. For points $(1, i), (n, j)$ we write
$$
G[(1, i)^{*k} \to (m, j)^{*k}] 
$$
for the $k$-point last passage value from $(i, I)$ to $(n, J)$, where $I = (i + k -1, \dots, i +1, i), J = (j, j-1, \dots, j - k + 1)$. These definitions are analogous to the definitions of semi-discrete last passage introduced in Section 1. Moreover, discrete last passage percolation also satisfies an isometry theorem, see Section 8 of \cite{DNV2} and references therein to precursor theorems. For an $m \X n$ array $G$ with $m \ge n$, define an $n \X n$ array $WG$ by the system of equations
\begin{equation*}
G[(1, n)^{*(n + 1 - k \vee \ell)} \to (m, \ell)^{*(n + 1 - k \vee \ell)}] = \sum_{i=k}^n \sum_{j=\ell}^n WG_{i, j}
\end{equation*}
for $\ell, k \in \II{1, n}$. Then we have the side-to-side isometry
\begin{equation}
\label{E:GWG}
G[(1, I) \to (m, J)] = WG[(1, I) \to (n, J)]
\end{equation}
for any $I, J$.

Now suppose $f$ is a sequence of $n$ continuous functions from $[0, t] \to \R$. For all $m \in \N$, let $f^m$ be the $m \X n$ array where $f^m_{i, j} = f^m_j(ti/m) - f^m_j(t(i-1)/m)$. Then since $f$ is continuous it is easy to check that as $m \to \infty$, for any $I, J$ we have
\begin{align*}
f^m[(1, I) \to (m, J)] - f[(0, I) \to (t, J)] \to 0
\end{align*}
and for any $i, j$ we have
\begin{align*}
f^m[(1, i)^{*k} \to (m, j)^{*k}] - f[(0, i)^{k} \to (t, j)^{k}] \to 0.
\end{align*}
Combining these facts with \eqref{E:GWG} immediately gives a side-to-side isometry for continuous functions.
\begin{prop}
	\label{P:F-side-to-side}
	Let $f = (f_i: \R \to \R, i \in \II{1, n})$ be a sequence of continuous functions, let $t > 0$, and define an $n \X n$ array $W^t f$ given by
	$$
	f[(0, n)^{n + 1 - k \vee \ell} \to (t, \ell)^{n + 1 - k \vee \ell}] = \sum_{i=k}^n \sum_{j=\ell}^n W^tf_{i, j}.
	$$
	Then for any $I, J \in \II{1, n}^k_\ge$ we have
	$$
	f[(0, I) \to (t, J)] = W^t f[(1, I) \to (n, J)].
	$$
\end{prop}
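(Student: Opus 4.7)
The plan is the one sketched in the excerpt: derive the continuous side-to-side isometry as a limit of the discrete isometry \eqref{E:GWG}. For each $m \in \N$, define the $m \X n$ array $f^m$ by $f^m_{i,j} = f_j(ti/m) - f_j(t(i-1)/m)$, so that any contiguous vertical sum telescopes to an increment of the corresponding $f_j$. Applying \eqref{E:GWG} to $f^m$ yields
$$f^m[(1,I) \to (m,J)] = W f^m[(1,I) \to (n,J)]$$
for every $m$, and the strategy is to pass both sides to the limit as $m \to \infty$.

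The single substantive step is to verify the two convergences
\begin{align*}
f^m[(1,I) \to (m,J)] &\longrightarrow f[(0,I) \to (t,J)], \\
f^m[(1,i)^{*k} \to (m,j)^{*k}] &\longrightarrow f[(0,i)^{k} \to (t,j)^{k}],
\end{align*}
both of which follow from uniform continuity of $f_1, \ldots, f_n$ on $[0,t]$. For the $\ge$ direction, any continuous disjoint $k$-tuple of paths with transition times $\pi_{j,i}$ can be rounded so that each transition occurs at a nearest grid point $\ell t/m$, producing a discrete disjoint $k$-tuple (disjointness is preserved because line indices differ by at least one at each transition) whose weight differs from the continuous weight by at most $O(kn\, \om(t/m))$, where $\om$ is a joint modulus of continuity for the $f_j$. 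For the $\le$ direction, any disjoint lattice $k$-tuple on $f^m$ defines a corresponding continuous disjoint $k$-tuple of exactly the same weight by placing each line transition at the appropriate grid time in $[0,t]$. Since $\om(t/m) \to 0$, taking suprema yields equality in the limit.

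With the convergences in hand, the conclusion is essentially immediate. The left-hand side of the discrete isometry converges to $f[(0,I) \to (t,J)]$. For the right-hand side, the entries $W f^m_{i,j}$ are obtained from the multi-point values $f^m[(1,n)^{*k'} \to (m,\ell)^{*k'}]$ by solving the triangular defining system via successive subtraction, so the second convergence above gives $W f^m_{i,j} \to W^t f_{i,j}$ for every $(i,j)$. Because $W f^m[(1,I) \to (n,J)]$ is an explicit maximum over a fixed finite collection of sums of array entries (the collection depending only on $I, J, n$), it converges to $W^t f[(1,I) \to (n,J)]$, and equating the two limits completes the proof. The only real obstacle is the bookkeeping at the approximation step, namely ensuring that rounding transitions to the grid preserves disjointness; this is handled automatically because between any two consecutive transitions in a disjoint $k$-tuple, each path spends a positive amount of continuous time on a single line, so for $m$ large enough the line profile of the rounded path matches that of the original.
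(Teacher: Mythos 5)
Your proposal matches the paper's argument step for step: discretize $f$ into the arrays $f^m$, invoke the discrete side-to-side isometry, and pass to the limit using the two convergences of discrete to continuous last passage values (for general endpoints and for repeated points). The paper leaves the convergence verification to the reader as ``easy to check,'' and your sketch of rounding transition times to the grid is precisely the intended argument, so this is the same proof with a bit more of the bookkeeping spelled out.
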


We use this proposition to prove the following lemma.

\begin{lemma}
	\label{L:ac-lemma}
	For a sequence of $n$ continuous functions $f$ and $t > 0$ define $g_{t, f}:\{(i, j) \in \II{1, n} : i \ge j\} \to \R$ by
	$$
	g_{t,f}(i, j) = f[(0, i) \to (t, j)].
	$$
	Then for any $0 < s,t$, if $B$ is a sequence of independent standard Brownian motions, then $g_{t, B}$ and $g_{s, B}$ are mutually absolutely continuous.
\end{lemma}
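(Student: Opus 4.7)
The plan is to combine a Brownian scaling identity with a density argument. By Brownian scaling, setting $\tilde B_k(u) = \sqrt{t/s}\, B_k(us/t)$ yields a new sequence of $n$ independent standard Brownian motions, and substituting $\sigma = \pi \cdot s/t$ in the supremum defining each $g_{t, \tilde B}(i, j)$ gives the identity $g_{t, \tilde B}(i, j) = \sqrt{t/s}\, g_{s, B}(i, j)$ for every $i \ge j$. Since $\tilde B \eqd B$, we conclude $g_{s, B} \eqd \sqrt{s/t}\, g_{t, B}$, so the law of $g_{s, B}$ is the image of the law of $g_{t, B}$ under the linear scaling $v \mapsto \sqrt{s/t}\, v$ on $\R^{n(n+1)/2}$. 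It therefore suffices to prove that the law of $g_{t, B}$ is mutually absolutely continuous with respect to its image under scaling by any $c > 0$.

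To this end, I will argue that $\operatorname{Law}(g_{t, B})$ admits a density $\rho$ with respect to Lebesgue measure on $\R^{n(n+1)/2}$, supported on the closure of an open cone $K$ and strictly positive on $K$. The cone $K$ is cut out by the natural strict monotonicity and interlacing constraints that any realization of $g_{t, B}$ must satisfy (in particular $v_{i+1, j} > v_{i, j}$ and $v_{i, j-1} > v_{i, j}$ whenever defined, together with the further strict inequalities enforced by the RSK structure); it is manifestly invariant under multiplication by positive scalars. The density claim can be verified via the side-to-side RSK isometry (Proposition~\ref{P:F-side-to-side}), which identifies $g_{t, B}(i, j) = W^t B[(1, i) \to (n, j)]$ as a last-passage value in the triangular array $W^t B$. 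For standard Brownian input, the joint law of $W^t B$ at fixed time $t$ coincides with the marginal of Warren's triangular Brownian array (equivalently, the GUE-minor eigenvalue distribution at time $t$), which carries an explicit strictly positive density on an interlacing cone; pushing this forward by the coordinate map $(i, j) \mapsto W^t B[(1, i) \to (n, j)]$ delivers the desired density for $g_{t, B}$ on $K$.

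Once this is in place, the scale-invariance of $K$ immediately implies that the pushforward density $c^{-n(n+1)/2}\rho(\cdot/c)$ has the same support and is positive on the same set $K$, so $\operatorname{Law}(g_{t, B})$ and $\operatorname{Law}(c\, g_{t, B})$ are mutually absolutely continuous. Combined with the scaling identity above, this yields the lemma.

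The main obstacle is the density claim and its strict positivity on $K$: while this follows cleanly from well-established integrable-probability facts about Brownian last-passage percolation (Baryshnikov, O'Connell--Yor, Warren), a fully self-contained proof would require either working out the joint density of the Warren array directly, or performing an induction on $n$ that conditions successively on the diagonal entries $g_{t, B}(k, k) = B_k(t)$ and applies a Malliavin-type nondegeneracy argument to the recursive supremum $g_{t, B}(i, j) = \sup_{0 \le u \le t}[B[(0, i) \to (u, j{+}1)] + B_j(t) - B_j(u)]$.
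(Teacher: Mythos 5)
Your overall strategy — reducing via the side-to-side RSK isometry (Proposition~\ref{P:F-side-to-side}) to the GUE-minors / Gelfand--Tsetlin density, with a Brownian scaling reduction $g_{s,B} \eqd \sqrt{s/t}\,g_{t,B}$ — is essentially the same one the paper uses. But there is a genuine gap in the step where you claim that ``pushing this forward by the coordinate map $(i,j)\mapsto W^t B[(1,i)\to(n,j)]$ delivers the desired density for $g_{t,B}$.'' That coordinate map $\Phi$ sends the array $W^tB$ to a collection of last-passage maxima over lattice paths; it is piecewise linear, not obviously injective, and you give no argument that its Jacobian is nondegenerate on the interlacing cone. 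Pushing a Lebesgue density forward through such a map can in principle concentrate mass on lower-dimensional sets. You flag the density claim as the ``main obstacle,'' but the remedies you sketch (computing the Warren array density directly, or a Malliavin-type argument) do not address this pushforward issue, since the Warren/GUE-minors density lives on $W^tB$, not on $g_{t,B}$.

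The paper sidesteps this entirely by never asserting that $g_{t,B}$ has a density. It notes that $g_{t,B}$ is the image of $W^tB$ under a fixed, $t$-independent measurable map $\Phi$ (this is exactly Proposition~\ref{P:F-side-to-side}), shows that the law of $W^tB$ (equivalently, of the triangular array $X^t$) is mutually absolutely continuous with Lebesgue measure on the Gelfand--Tsetlin cone for every $t$ (via the GUE-minors identification: explicit top-row density plus uniform conditional interlacing), and concludes that the laws of $W^tB$ and $W^sB$ are mutually absolutely continuous; mutual absolute continuity is then automatically inherited by the pushforwards $g_{t,B}=\Phi(W^tB)$ and $g_{s,B}=\Phi(W^sB)$. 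Your plan can be salvaged the same way: after the scaling reduction, observe that $c\cdot g_{t,B}=\Phi(c\cdot W^tB)$ by positive homogeneity of last passage values, and that $W^tB$ and $c\cdot W^tB$ are mutually absolutely continuous because the Gelfand--Tsetlin cone is scale invariant and $W^tB$ has a positive Lebesgue density on it; no density statement for $g_{t,B}$ itself is ever required.
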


\begin{proof}
	By Proposition \ref{P:F-side-to-side}, it is enough to show that $W^t B$ and $W^s B$ are mutually absolutely continuous for all $t, s$. The array $W^t B$ contains exactly the same information as the triangular array $X^t = \{X^t_{i, j}, i \le j \in \II{1, n}\}$ given by the rule
	\begin{equation*}
	\sum_{i=1}^k X^t_{i, j} = f[(1, n)^k \to (t, n - j + 1)^k].
	\end{equation*}
	To complete the proof, it enough to note that for any $t$, the law of $X^t$ is mutually absolutely continuous with respect to Lebesgue measure on the set of all arrays $x=x_{i, j}$ satisfying the interlacing inequalities
	\begin{equation}
	\label{E:Xij}
	x_{i, j} \ge x_{i, j-1}, \qquad x_{i, j-1} \ge x_{i+1, j}
	\end{equation}
	for all $i, j$ (i.e. Lebesgue measure on Gelfand-Tsetlin patterns). This is well-known. Indeed, $X^t/\sqrt{t}$ has the same law as the GUE minors eigenvalue process $\la$, see p. 3691 in \cite{o2003path}. 
	The top row of GUE eigenvalues in this process has an explicit Lebesgue density (e.g. see \cite[Theorem 2.5.2]{anderson2010introduction}) and can hence be seen to be mutually absolutely continuous with respect to Lebesgue measure on the set where $\la_{1, n} \ge \dots \ge \la_{n, n}$. Conditional on this top row, the law of the remaining rows is uniform on the compact set satisfying the inequalities in \eqref{E:Xij}, see \cite{baryshnikov2001gues}.
\end{proof}

\begin{proof}[Proof of Theorem \ref{T:main-comparison}]
	By combining parts $1$ and $3$ of Theorem \ref{T:landscape-iso}, we get that there is a random integer $N \le - b-1$ such that for all $i \in \{1, \dots, k\}$ and $y \in [-b, b]$ we have
	$$
	\cL(x_i, 0; y, 1) - \cL(x_i, 0; -b, 1) = W_\bx \cL[(N, i) \to (-b, 1)] - W_\bx \cL[(N, i) \to (y, 1)].
	$$
	By Theorem \ref{T:landscape-iso}.2, for every integer $n \le -b -1$, the process 
	$$
	W_\bx \cL[(n, i) \to (-b, 1)] - W_\bx \cL[(n, i) \to (y, 1)], i \in \{1, \dots, k\}, y \in [-b, b]
	$$ is absolutely continuous with with respect to the same last passage process with $W_\bx \cL$ replaced by $n$ independent Brownian motions $B$:
	\begin{equation}
	\label{E:BniBni}
	B[(n, i) \to (-b, 1)] - B[(n, i) \to (y, 1)], i \in \{1, \dots, k\}, y \in [-b, b].
	\end{equation}
	Therefore to complete the proof, we just need to check that for all integers $n \ge - b-1$, the process in \eqref{E:BniBni} is absolutely continuous with respect to the same process with $n$ replaced by $-b-1$. We will check the stronger claim that the processes
	\begin{equation}
	\label{E:Bai}
	B[(a, i) \to (y, 1)], i \in \{1, \dots, k\}, y \in [-b, b]
	\end{equation}
	are all mutually absolutely continuous for $a < b$. Indeed, the laws of the random vectors 
	\begin{equation}
	\label{E:vector}
	(B[(a, i) \to (b, j)], j \le i \in \II{1, k})
	\end{equation} are mutually absolutely continuous for all $a < b$ by Lemma \ref{L:ac-lemma}. Therefore to complete the proof, it is enough to show that conditional on the vector in \eqref{E:vector}, the distribution in \eqref{E:Bai} does not depend on $a$. Under this conditioning the function $B|_{[-b, b]}$ is still just a collection of $n$ independent Brownian motions, and by the metric composition law (Proposition \ref{P:MC-law}) at the vertical line $\{b\}\X \Z$, for $i \in \II{1, k}, y \in [-b, b]$ we have
	\begin{equation*}
	B[(a, i) \to (y, 1)] = \max_{j \in \II{1, i}} B[(a, i) \to (b, j)] + B[(b, j) \to (y, 1)]. 
	\end{equation*}
	The right hand side is a function of \eqref{E:vector} and $B|_{[-b, b]}$, completing the proof.
\end{proof}

\bibliographystyle{dcu}

\bibliography{bibliography}
\end{document}